\documentclass[11pt,leqno]{amsart}
\textheight 8in
\textwidth 5.5 in
\voffset -0.3in
\hoffset -0.6in
\usepackage{amsmath}
\usepackage{amsfonts}
\usepackage{amssymb}
\usepackage{graphicx}
\usepackage{color}
\parindent 6pt
\parskip 4pt

\newtheorem{theorem}{Theorem}[section]\newtheorem{thm}[theorem]{Theorem}
\newtheorem*{theorem*}{Theorem}
\newtheorem{lemma}{Lemma}[section]
\newtheorem{corollary}[theorem]{Corollary}

\newtheorem{prop}{Proposition}[section]
\newtheorem{definition}[theorem]{Definition}

\newtheorem{remark}[theorem]{Remark}


\def \b {\beta}

\def\k{\kappa}
\def\n{\nabla}
\def\Ric{\text{Ric}}

\def\a{\alpha}
\def\l{\lambda}
\def\ol{\overline}

\def\e{\epsilon}
\def\p{\partial}

\def\R{\mathbb{R}}

\def\vp{\varphi}

\def\L{{\mathcal L}}

\def\Ric{\operatorname{Ric}}

\def\tr{\operatorname{tr}}

\newcommand{\eps}{{\varepsilon}}

\numberwithin{equation}{section}
\def\F{\mathcal{F}}

\begin{document}

\title[On a class of quasilinear operators]{On a class of quasilinear operators on smooth metric measure spaces}

\author{Xiaolong Li}
\address{(to start) Department of Mathematics and Statistics, McMaster University, Hamilton, Ontario, L8S 4K1, Canada}
\email{lxlthu@gmail}

\author{Yucheng Tu}
\address{Department of Mathematics, University of California, San Diego, La Jolla, CA 92093, USA}
\email{y7tu@ucsd.edu}

\author{Kui Wang}\thanks{The research of the third author is supported by NSFC No.11601359} 
\address{School of Mathematical Sciences, Soochow University, Suzhou, 215006, China}
\email{kuiwang@suda.edu.cn}



\subjclass[2010]{Primary: 35K55, 58C40; Secondary: 35P30, 58J50}
\keywords{Quasilinear operators, modulus of continuity, eigenvalue comparison, smooth metric measure spaces, two-points maximum principle, viscosity solutions}

\maketitle

\begin{abstract}  
We derive sharp estimates on the modulus of continuity for solutions of a large class of quasilinear isotropic parabolic equations on smooth metric measure spaces (with Dirichlet  or Neumann boundary condition in case the boundary is non-empty). 
We also derive optimal lower bounds for the first Dirichlet eigenvalue of a class of homogeneous quasilinear operators, which include non-variational operators. 
The main feature is that this class of operators have corresponding one-dimensional operators, which allow sharp comparisons with solutions of one-dimensional equations. 
\end{abstract}

\tableofcontents

\section{Introduction}
A smooth metric measure space is a triple $(M^n, g, e^{-f} d\mu_g)$, where $(M^n,g)$ is a complete $n$-dimensional Riemannian manifold, $f$ is a smooth real function on $M$, and $d\mu_g$ is the Riemannian volume density on $M$. The study of partial differential equations on smooth metric measure spaces has received considerable attention in the last few decades, and numerous results on Riemannian manifolds with Ricci curvature bounded from below were extended to smooth metric measure spaces (many to even non-smooth metric measure spaces) with $N$-Bakry-\'Emery Ricci curvature bounded from below. Recall that the $N$-Barky-\'Emery Ricci curvature $\Ric^N_f$ of a smooth metric measure space, which is a natural generalization of the classical Ricci curvature for Riemannian manifolds, is defined for $N \in [n, \infty]$ by
\begin{equation*}
    \Ric^N_f =\begin{cases} \Ric, & \text{ if } N=n \text{ and } f=\text{constant}, \\
    -\infty, & \text{ if } N=n \text{ and } f\neq \text{constant},\\ 
    \Ric +\nabla^2 f -\frac{\nabla f \otimes \nabla f}{N-n}, & \text{ if } N \in (n, \infty), \\
    \Ric+\nabla^2 f, & \text{ if } N=\infty.
     \end{cases} \\
\end{equation*}
Here $\Ric$ and $\nabla^2 f$ denote the Ricci curvature of $M$ and the Hessian of $f$, respectively. 
We also write $\Ric_f := \Ric^{\infty}_f = \Ric + \nabla^2 f$ for simplicity. 
Important examples of smooth metric measure spaces with $\Ric^N_f$ bounded from below include: Riemannian manifolds with Ricci curvature bounded from below (corresponds to $N=n$ and $f=\text{constant}$), Bakry-\'Emery manifolds (corresponds to $N=\infty$), gradient Ricci solitons (i.e. $\Ric_f = \k g$ for $\k \in \R$), and quasi-Einstein metrics (i.e. $\Ric+\n^2 f -\frac{\nabla f \otimes \nabla f}{N-n} =\k g$ for $\k \in \R$). 

On a smooth metric measure space $(M^n ,g, e^{-f} d\mu_g)$, we are interested in the following quasi-linear isotropic operators $Q: TM \setminus\{0\} \times \text{Sym}^2T^*M \to \R$
\begin{equation}\label{Q def}
Q[p, X] := \tr \left( \left[\a(|p|)\frac{p \otimes p}{|p|^2}+\b(|p|)\left(I_n-\frac{p \otimes p}{|p|^2}  \right) \right] X \right)-\b(|p|)\langle p, \n f \rangle, 
\end{equation}
where $\a$ and $\b$ are nonnegative continuous functions, $I_n$ is the $n\times n$ identity matrix, and $TM$, $T^*M$ and $\text{Sym}^2T^*M$ denote the tangent bundle, the cotangent bundle and the set of symmetric two tensors on $M$, respectively. 
Throughout the paper, we assume that $Q: TM \setminus\{0\} \times \text{Sym}^2T^*M \to \R$ is a continuous function (allowed to be singular at $p=0$) and $Q$ is degenerate elliptic in the sense that $Q(p,X) \geq Q(p,Y)$ for all $p \in TM \setminus\{0\}$ and $X \geq Y \in \text{Sym}^2T^*M$. Since $Q[u]$ is not necessarily of divergent form, it is necessary to the terminology of viscosity solutions in \cite{CIL92} (see also Section 2) throughout the paper. 
Note that the family of operators $Q[u]:=Q[\n u, \n^2 u]$ in \eqref{Q def} includes many greatly studied elliptic operators for suitable choices of $\a, \b$ and $f$. For instance, $Q[u]$ covers:
\begin{enumerate}
    \item the Laplacian \\
    $\Delta u:=\text{div}(\n u)$ (with $\a =\b =1$ and $f=\text{constant}$);
    \item the $f$-Laplacian \\
    $\Delta_f u :=\Delta u -\langle \n u, \n f \rangle$ (with $\a =\b =1$);
    \item the $p$-Laplacian \\
    $\Delta_p u :=\text{div}(|\n u|^{p-2} \n u)$ with $1<p<\infty$ (with $\a =(p-1)|\n u|^{p-2}$, $\b =|\n u|^{p-2}$ and $f=\text{constant}$);
    \item the weighted $p$-Laplacian \\
    $\Delta_{p,f}u :=\Delta_p u -|\n u|^{p-2} \langle \n u, \n f \rangle$ (with $\a =(p-1)|\n u|^{p-2}$, $\b =|\n u|^{p-2}$);
    \item the mean curvature operator \\
    $Hu :=\sqrt{1+|\n u|^2}\text{div}\left( \frac{\n u}{\sqrt{1+|\n u|^2}}\right)$ (with  $\a =\frac{1}{1+|
    \n u|^2}$, $\b=1$ and $f =\text{constant}$ ),
\end{enumerate}
as well as some non-divergent or degenerate elliptic operators such as 
\begin{enumerate}
    \item[(6)] the normalized or game-theoretic $p$-Laplacian \\
    $\Delta^N_p u:=\frac{1}{p}|\n u |^{2-p} \text{div}(|\n u|^{p-2} \n u)$ for $1<p<\infty$ (with $\a=(p-1)/p$ and $\b=1/p$ and $f=\text{constant}$);
    \item[(7)] the level set mean curvature operator or the $1$-Laplacian \\
    $\Delta_1 u:= \Delta u -|\n u|^{-2} \n^2 u(\n u, \n u)$ (with $\a =0$ and $\b=1$, $f=\text{constant}$);
    \item[(8)] the $\infty$-Laplacian \\
    $\Delta_\infty u:= |\n u|^{-2} \n^2 u(\n u, \n u)$ (with $\a=1$, $\b=0$ and $f=\text{constant}$) 
\end{enumerate}
Indeed, the second order part of $Q[u]$ can be written as
a combination of the $1$-Laplacian 
and the $\infty$-Laplacian, 
\begin{equation*}
    Q[u]=\a(|\n u|) \Delta_{\infty} u + \b(|\n u|) \Delta_{1} u - \b(|\n u|) \langle \n u, \nabla f \rangle.
\end{equation*}
The $1$-Laplacian $\Delta_1$ appears in the level set formulation of the mean curvature flow (see \cite{CGG} and \cite{ES91}) and have been investigated extensively ever since, while the $\infty$-Laplacian $\Delta_\infty$ plays an important role in the description of tug-of-war games (see for instance \cite{Evans07} and \cite{PSSW09}).
The normalized $p$-Laplacian $\Delta^N_p$ has recently received considerable attention for its important applications in image-processing \cite{Kawohl08} and in the description of tug-of-war games with noise \cite{PS08}. 

The class of operators $Q[u]$ in \eqref{Q def} were considered in various recent works including \cite{Andrewssurvey15}\cite{AC09}\cite{AC13}\cite{AN12}\cite{AX19}\cite{Li16}\cite{LW17}\cite{LW19eigenvalue}\cite{LW19eigenvalue2}. The key feature is that this class of operators has corresponding one-dimensional operators, which are obtained by assuming that the solution depends only on one of the variables, as well as taking into account the effect of geometric data such as curvature and dimension.  
This observation, together with the idea of comparing with solutions of one-dimensional equations, has led to numerous important results in the last decade. These include sharp gradient estimates via the the modulus of continuity estimates for quasi-linear equations on Euclidean domains \cite{AC09}, sharp modulus of continuity estimates for parabolic equations on Riemannian manifolds (assuming either $\p M=\emptyset$ or $\p M$ is convex and the Neumann boundary condition is imposed) and a simple proof of the optimal lower bound for the first nonzero eigenvalue of the Laplacian \cite{AC13}, proof of the Fundamental Gap Conjecture for convex domains in the Euclidean space \cite{AC11} (see also \cite{Ni13} for an elliptic proof and \cite{DSW18}\cite{HW17}\cite{SWW19} for the fundamental gaps of convex domains in the sphere). 
The above-mentioned results are proved by using the two-points maximum principle together with comparisons with one-dimensional models.  
We refer the reader to the wonderful survey \cite{Andrewssurvey15} for more discussions on modulus of continuity estimates and its applications as well as more applications of the two-points maximum principle in geometric heat equations. 

The purpose of the present paper is to investigate the quasi-linear operators defined in \eqref{Q def} with the Dirichlet boundary condition on smooth metric measure spaces, as well as to extend previous results for Riemannian manifolds in \cite{AC13}\cite{AX19}\cite{LW17}\cite{LW19eigenvalue} to the more general smooth metric measure spaces setting. 

Recall that a nonnegative function $\vp$ is called a modulus of continuity for a function $u: M \to \R$ if
\begin{equation*}
    u(y)-u(x) \leq 2 \vp \left(\frac{d(x,y)}{2} \right)
\end{equation*}
for all $x, y \in M$, where $d$ is the distance induced by the Riemannian metric. 
Our first result establishes sharp estimates on the modulus of continuity of solutions to the parabolic equation 
\begin{equation}\label{parabolic pde}
    u_t =Q[u]
\end{equation}
with Dirichlet boundary condition. 

\begin{thm}\label{Thm MC}
Let $(M^n,g, e^{-f} d\mu_g)$ be a compact smooth metric measure space with smooth nonempty boundary $\p M$ and diameter $D$. 
Let $u:M\times [0,T) \to \R$ be a viscosity solution of \eqref{parabolic pde}
with Dirichlet boundary condition $u(x,t)=0$ for $x\in \p M$ and $t\in [0,T)$. Let $\vp:[0, D/2] \times \R_+ \to \R_+$ be a smooth function satisfying (i) $\vp_0(s):=\vp(s, 0)$ is a modulus of continuity of $u(x, 0)$ and $|u(x,0)|\le \vp_0(d(x,\partial M))$; (ii) $\vp'\geq 0$ and $\vp''\le 0$ on $[0, D/2] \times \R_+$, $\vp(0,t)=0$ for $t\ge 0$. 
\begin{enumerate}
    \item[(1)] Suppose that for $N \in [n, \infty)$, we have $\Ric^N_f \geq (N-1)\kappa$ and $H_f \geq (N-1) \Lambda$ for $\k, \Lambda \in \R$, and $\vp$ satisfies $\vp_t \geq \a(\vp')\vp'' -(N-1) T_{\kappa, \Lambda} \b(\vp')\vp'$. Then $\vp(s,t)$ is a modulus of continuity for $u(x,t)$ for each $t \in [0,T)$;
    \item[(2)] Suppose that $\Ric_f \geq 0$, $H_f \geq 0$ and $\vp$ satisfies $\vp_t \geq \a(\vp')\vp''$. Then $\vp(s,t)$ is a modulus of continuity for $u(x,t)$ for each $t \in [0,T)$.
\end{enumerate}
\end{thm}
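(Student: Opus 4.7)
The plan is to run a two-points maximum principle on $M\times M\times[0,T)$ in the viscosity framework via Crandall--Ishii, with the Dirichlet condition handled by allowing either of the two points to migrate to $\p M$. For each $\eps>0$ I aim to show
\[
Z_\eps(x,y,t) := u(y,t)-u(x,t)-2\varphi\!\lf(\tfrac{d(x,y)}{2},t\ri)-\eps e^t \le 0
\]
on $\ol M\times\ol M\times[0,T)$, interpreting $u=0$ on $\p M$; the ``boundary version'' of this reads $|u(x,t)|\le \varphi(d(x,\p M),t)$, which at $t=0$ is precisely hypothesis (i) (the passage between the two uses the concavity $\varphi''\le 0$). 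Assume for contradiction that $Z_\eps$ first vanishes at some $t_0>0$ at a pair $(x_0,y_0)$, necessarily with $x_0\neq y_0$ since $\varphi(0,\cdot)=0$ and $u$ is continuous.

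For an interior contact, let $\g\colon[0,L]\to M$ be a unit-speed minimising geodesic from $x_0$ to $y_0$, set $s_0=L/2$ and $p=\varphi'(s_0,t_0)\ge 0$. Crandall--Ishii applied with the smooth test barrier $(x,y)\mapsto 2\varphi(d(x,y)/2,t_0)+\eps e^{t_0}$ produces admissible semijets at $x_0$ and $y_0$ whose gradients are $-p\,\g'(0)$ and $p\,\g'(L)$, together with Hessian matrices $X,Y$ controlled by the second derivatives of the barrier along a parallel orthonormal frame $\{e_i\}$ with $e_n=\g'$. Substituting into the viscosity inequalities $u_t\le Q[\cdot,Y]$ at $y_0$ and $u_t\ge Q[\cdot,X]$ at $x_0$, subtracting, and decomposing $Q$ into tangential and normal parts along $\g$, yields an inequality of the form
\[
2\varphi_t(s_0,t_0)+\eps e^{t_0} \le 2\a(p)\varphi''(s_0,t_0) + \b(p)\,\mathcal{R},
\]
where $\mathcal{R}$ collects $-2p$ times the transverse second variation of arclength along $\g$ together with the $f$-drift correction $\langle\n f(x_0),\g'(0)\rangle - \langle\n f(y_0),\g'(L)\rangle$. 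Under part (1), the Wei--Wylie weighted Hessian/Riccati comparison driven by $\Ric_f^N\ge(N-1)\k$, with $H_f\ge(N-1)\Lambda$ serving as boundary initial value, bounds $\mathcal{R}\le -2(N-1)T_{\k,\Lambda}(s_0)\,p$; the structural hypothesis on $\varphi_t$ then contradicts the displayed inequality, and letting $\eps\to 0^+$ concludes the proof. Under part (2), the $N=\infty$ bounds $\Ric_f\ge 0$ and $H_f\ge 0$ suffice directly to give $\mathcal{R}\le 0$ without invoking any nontrivial one-dimensional model, so the weaker hypothesis $\varphi_t\ge \a(\varphi')\varphi''$ closes the argument.

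I expect the main obstacle to be twofold. First, the Hessian decomposition along $\g$ requires $y_0$ to lie outside the cut locus of $x_0$; this should follow because $\g$ realises $Z_\eps=0$ at a strict spatial maximum, so any shorter competing path would strictly decrease the barrier, but the point must be verified carefully within the viscosity semijet framework rather than classically. Second, producing the sharp constant $(N-1)T_{\k,\Lambda}(s_0)$ in front of $\b(p)p$ (and not a weaker one) forces the full $N$-Bakry--\'Emery machinery via the Wei--Wylie conformal trick $e^{-2f/(N-n)}$ along $\g$, which reduces the weighted transverse second variation to a one-dimensional Riccati inequality whose extremal solution is exactly $T_{\k,\Lambda}$. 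The boundary case $y_0\in\p M$ is a one-point variant of the same argument: taking $y_0$ as a nearest boundary point to $x_0$ and repeating the second-variation computation along the geodesic from $x_0$ to $\p M$, with $H_f$ entering as the initial value of the Riccati comparison, is precisely what accounts for $\Lambda$'s appearance as a boundary parameter inside $T_{\k,\Lambda}$.
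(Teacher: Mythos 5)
Your overall skeleton is right: a two-points maximum principle on $M\times M\times[0,T)$ in the viscosity framework, with a boundary case handled separately via a decay estimate for $u(x,t)-\varphi(d(x,\partial M),t)$. This is the paper's structure. However, there is one genuine gap and one under-developed step.

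The genuine gap is in your treatment of the interior contact. You claim that for a minimising geodesic $\gamma$ joining two \emph{interior} points $x_0,y_0$, the Riccati/second-variation comparison produces the bound $\mathcal{R}\le -2(N-1)T_{\kappa,\Lambda}(s_0)p$, ``with $H_f\ge(N-1)\Lambda$ serving as boundary initial value.'' This cannot be correct: the second variation of arclength for a geodesic between two interior points never sees the boundary, and the index form depends only on $\Ric^N_f$ along $\gamma$ (and the free choice of the transverse test field $\eta$). The mean-curvature lower bound $H_f\ge(N-1)\Lambda$ only enters when one endpoint lies on $\partial M$, where the boundary term in the second variation involves the second fundamental form. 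The correct interior comparison, obtained by choosing $\eta(s)=C_{\kappa,0}(s-s_0/2)/C_{\kappa,0}(s_0/2)$, yields $T_{\kappa,0}$, not $T_{\kappa,\Lambda}$ (this is the paper's Theorem~\ref{Thm comparison distance}, used in Case~1 of the Dirichlet proof). Your proposal therefore never confronts the discrepancy between the $T_{\kappa,0}$ that the interior case actually produces and the $T_{\kappa,\Lambda}$ appearing in the hypothesis on $\varphi$, and as stated the interior estimate you write down is not available.

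The second issue is that the boundary case is left too vague. When, say, $x_0\in\partial M$ so that $u(x_0,t_0)=0$, the paper does not re-run a two-point Riccati argument along a geodesic to $\partial M$. Instead it reduces to a one-point inequality: by $\varphi(0,\cdot)=0$, $\varphi'\ge 0$ and $\varphi''\le 0$, one has $\varphi(d(y_0,\partial M),t_0)\le 2\varphi(d(y_0,\partial M)/2,t_0)\le 2\varphi(d(x_0,y_0)/2,t_0)$, hence
\begin{equation*}
u(y_0,t_0)-\varphi(d(y_0,\partial M),t_0)\ \ge\ u(y_0,t_0)-2\varphi\!\left(\tfrac{d(x_0,y_0)}{2},t_0\right)=\eps(1+t_0)>0,
\end{equation*}
contradicting the decay estimate $u(\cdot,t)\le\varphi(d(\cdot,\partial M),t)$ (Theorem~\ref{Thm Decay Intro}), which is where $H_f\ge(N-1)\Lambda$ and $T_{\kappa,\Lambda}$ genuinely enter, via the comparison for $d(x,\partial M)$ (Theorem~\ref{Thm comparison distance to boundary}). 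You do flag the decay estimate in your opening paragraph, but your final paragraph's ``one-point variant of the same argument'' does not actually give this reduction, and without the concavity step above the two-point contact at the boundary does not translate into the one-point decay estimate. You should separate the two comparison theorems cleanly (one with $T_{\kappa,0}$ for $d(x,y)$, one with $T_{\kappa,\Lambda}$ for $d(x,\partial M)$), prove the decay estimate first, and then run the three-case alternative as the paper does.
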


Here $H_f$ denotes the $f$-mean curvature of $\p M$ defined by
\begin{equation*}
    H_f(x) =H(x) -\langle \n f(x),  \nu(x) \rangle,
\end{equation*}
where $\nu(x)$ is the outward unit normal vector field at $x\in \p M$ and $H(x)$ denotes the mean curvature at $x \in \p M$, and the function $T_{\kappa,\Lambda}$ is defined for $\kappa, \Lambda \in \R$ by 
$$T_{\kappa,\Lambda} (t):=- \frac{C'_{\kappa, \Lambda}(t)}{C_{\kappa, \Lambda}(t)}, $$ 
where $C_{\kappa, \Lambda}(t)$ is unique solution of the initial value problem
\begin{equation}\label{C def}
    \begin{cases} 
    \phi''+\kappa \phi =0, \\
    \phi(0)=1,     \phi'(0) =-\Lambda.
    \end{cases}
\end{equation}

Theorem \ref{Thm MC} complements the work of Andrews and Clutterbuck \cite{AC13} for Riemannian manifold (closed or with a convex boundary and Neumann boundary condition). 
It seems for the Neumann case, one can only deal with convex boundaries as in \cite{AC13}, but for the Dirichlet case, we are able to handle any lower bound of the $f$-mean curvature of the boundary.  Theorem \ref{Thm MC} is sharp. In fact, by exact same process as in \cite[Section 5]{AC13}, one can construct solutions of    \eqref{parabolic pde} on 
$(\k, \Lambda)$-equational model space as in \cite[Theorem 1.6 and 1.7]{Sakurai19}, satisfying the conditions of Theorem \ref{Thm MC} and satisfying the conclusion with equality holds.

The proof of Theorem \ref{Thm MC} relies on the two comparison theorems, one for $d(x,\p M)$ (see Theorem \ref{Thm comparison distance to boundary}), the other one for $d(x,y)$ as a function on $M\times M$ (see Theorem \ref{Thm comparison distance}). Both comparison theorems are sharp, more general than the ones in the literature and of independent interests.

The modulus of continuity estimates in 
\cite{AC13} led to an easy proof of the optimal lower bound on the first nonzero (closed or Neumann) eigenvalue of the Laplacian or the $f$-Laplacian in terms of dimension, diameter, and the lower bound of Ricci curvature. 
The sharp lower bound was previously proved by Zhong and Yang for the nonnegative Ricci case case, by Kr\"oger \cite{Kroger92} (see also Bakry-Qian \cite{BQ00} for an explicit statement) for general Ricci lower bound using gradient estimates method, 
and independently by Chen and Wang \cite{CW94,CW95} using stochastic method. 
The gradient estimate method, dating back to the work of Li \cite{Li79} and Li and Yau \cite{LY80}, was used to prove sharp lower bound for the first nonzero eigenvalue of the $p$-Laplacian 
in \cite{Valtorta12} and \cite{NV14}, and for the weighted $p$-Laplacian in \cite{LW19eigenvalue, LW19eigenvalue2}. On the other hand, the modulus of continuity approach seems only work for $1<p\leq 2$, see \cite[Section 8]{Andrewssurvey15} and \cite[Section 2]{LW19eigenvalue}. 


It is natural to ask whether the modulus of continuity estimates in Theorem \ref{Thm MC}
lead to lower bound for the first Dirichlet eigenvalue of the Laplacian or more generally the $p$-Laplacian.  
It turns out that we can establish optimal lower bound for the first Dirichlet eigenvalue of a large class of quasi-linear operators, but not as a consequence of Theorem \ref{Thm MC}. 
Indeed, this can be achieved this by proving similar estimates as in Theorem \ref{Thm MC} but requiring one of the two variables to be contained in $\p M$. We focus on the statement here and elaborate the difference between the closed/Neumann and Dirichlet case in Section 6. 
To define eigenvalues, it is necessary to assume that $Q[u]$ is homogeneous of degree $\gamma >0$ in the sense that
\begin{equation}\label{def homogeneous}
    Q[c u] =c^{\gamma} Q[u].
\end{equation}
The Dirichlet eigenvalue problem is then 
\begin{equation}\label{eigen problem}
\begin{cases}
 Q[u]  =-\l |u|^{\gamma -1 } u, & \text{ in } M, \\
 u=0,  & \text{ on } \p M.  
\end{cases}
\end{equation}
Examples of homogeneous variational operators include the Laplacian, the $f$-Laplacian, the $p$-Laplacian and the weighted $p$-Laplacian, while examples of homogeneous non-variational operators include the normalized $p$-Laplacian $\Delta^N_p$ for $1<p<\infty$ and the operator $|Du|^{\gamma} \Delta^N_p$ for $\gamma > -1$ and $1<p<\infty$ (homogeneous of degree $\gamma+1$).
For operators that are not variational, a new definition for its first Dirichlet eigenvalue (also called the principle eigenvalue) is needed. 
Following Berestycki, Nirenberg, and Varadhan \cite{BNV94}, in the papers \cite{BD07, BD06} (where they actually deal with a wider class of operators), Birindelli and Demengel introduced the first Dirichlet eigenvalue $\bar{\l}(Q)$ of the operator $Q[u]$  defined as
\begin{equation}\label{def principal eigenvalue}
\bar{\l}(Q) =\sup \{\l \in \R : \text{there exists a positive viscosity supersolution $u$  of } \eqref{eigen problem} \}.
\end{equation}
Calling it a first Dirichlet eigenvalue can be justified: they proved 
that there exists a positive eigenfunction vanishing on the boundary associated with $\bar{\l}$, via Perron's method for viscosity solutions. In other words, for $\l=\bar{\l}(Q)$, the eigenvalue problem \eqref{eigen problem} admits a positive viscosity solution. The simplicity of $\bar{\l}(Q)$ has been proved very recently for the normalized $p$-Laplacian in \cite{CFK20}, but is not known for general operators. 

We establish the following optimal lower bound for $\bar{\l}(Q)$ in terms of geometric data of the underlying smooth metric measure space. 
\begin{thm}\label{thm Dirichlet eigenvalue}
Let $(M^n, g, e^{-f}d\mu_g)$ be a compact smooth metric measure space with smooth nonempty boundary $\p M$. 
Let $Q[u]$ be defined in \eqref{Q def} and assume further that $Q[u]$ is homogeneous of degree $\gamma >0$ in the sense of \eqref{def homogeneous}.  
Let $\bar{\l}(Q)$ be the first Dirichlet eigenvalue of $Q[u]$ defines as in \eqref{def principal eigenvalue}. 
\begin{enumerate}
    \item[(i)] Suppose $\Ric^N_f \geq (N-1)\kappa$ and $H_f \geq \Lambda$ for $N \in [n,\infty)$ and $\k, \Lambda \in \R$. Then we have 
\begin{equation*}
    \bar{\l}(Q) \geq \l_1
\end{equation*}
where $\l_1$ is the first eigenvalue of the one-dimensional problem
\begin{equation}\label{1D eq N finite}
     \begin{cases}
     \a (\vp') \vp'' -(N-1) T_{\k,\Lambda} \b(\vp') \vp' =-\l |\vp|^{\gamma-1}\vp, \\
     \vp(0)=0, \vp'(R)=0.
     \end{cases}
\end{equation}
\item[(ii)] Suppose $\Ric^N_f \geq 0$ and $H_f \geq 0$. Then we have 
\begin{equation*}
    \bar{\l}(Q) \geq \mu_1
\end{equation*}
where $\mu_1$ is the first eigenvalue of the one-dimensional problem on $[0,R]$
\begin{equation}\label{1D eq N infinite}
     \begin{cases}
     \a (\vp') \vp'' =-\l |\vp|^{\gamma-1}\vp,  \\
     \vp(0)=0, \vp'(R)=0.
     \end{cases}
\end{equation}
\end{enumerate}
\end{thm}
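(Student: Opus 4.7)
The plan is to establish $\bar{\lambda}(Q) \ge \lambda_1$ (and $\bar{\lambda}(Q) \ge \mu_1$ in part (ii)) by exhibiting a positive function $u$ on $M$, vanishing on $\partial M$, that satisfies $Q[u] + \lambda_1 u^\gamma \le 0$ (respectively $Q[u] + \mu_1 u^\gamma \le 0$) in the viscosity sense; by the sup-characterization \eqref{def principal eigenvalue}, this immediately yields the claimed bound. Write $r(x) := d(x,\partial M)$, let $R := \sup_{x \in M} r(x)$ be the inradius of $M$ (the $R$ implicit in \eqref{1D eq N finite}), and let $\varphi$ denote a first eigenfunction of \eqref{1D eq N finite} on $[0,R]$, normalized so that $\varphi(0) = 0$ and $\varphi > 0$ on $(0,R]$. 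A standard argument for positivity of the first eigenfunction of the one-dimensional quasilinear operator also yields $\varphi' \ge 0$ on $[0,R]$. The candidate supersolution is $u(x) := \varphi(r(x))$, which is continuous on $M$, vanishes exactly on $\partial M$, and is strictly positive in the interior.

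At any point where $r$ is $C^2$, the identities $|\nabla r| = 1$ and $\nabla^2 r(\nabla r, \cdot) = 0$ give $\Delta_\infty u = \varphi''(r)$ and $\Delta_1 u = \varphi'(r)\Delta r$, so the decomposition of $Q$ recorded in the introduction produces
\begin{equation*}
Q[u] = \alpha(\varphi'(r))\varphi''(r) + \beta(\varphi'(r))\varphi'(r)\,\Delta_f r.
\end{equation*}
Combining this with the sharp comparison $\Delta_f r \le -(N-1)T_{\kappa,\Lambda}(r)$ from Theorem \ref{Thm comparison distance to boundary}, valid in the viscosity sense on $M \setminus \partial M$ under the curvature hypotheses of part (i), and using $\beta \ge 0$ and $\varphi' \ge 0$, we obtain
\begin{equation*}
Q[u] \le \alpha(\varphi'(r))\varphi''(r) - (N-1)T_{\kappa,\Lambda}(r)\,\beta(\varphi'(r))\varphi'(r) = -\lambda_1 \varphi(r)^\gamma = -\lambda_1 u^\gamma
\end{equation*}
by the ODE \eqref{1D eq N finite}. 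Part (ii) is entirely analogous: the hypotheses $\Ric_f \ge 0$ and $H_f \ge 0$ give $\Delta_f r \le 0$ in the viscosity sense, so the $T_{\kappa,\Lambda}$-term drops out and the one-dimensional problem \eqref{1D eq N infinite} takes over.

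The principal obstacle is upgrading the formal computation above to a genuine viscosity inequality, since $r$ is only Lipschitz (with singularities on the cut locus of $\partial M$) and $Q$ itself is singular at $\nabla u = 0$, which occurs precisely on the ridge $\{r = R\}$. For a $C^2$ test function $\phi$ touching $u$ from above at an interior point $x_0$ with $r(x_0) < R$, the strict monotonicity of $\varphi$ makes $\varphi^{-1} \circ \phi$ a $C^2$ upper support of $r$ at $x_0$; the viscosity form of the distance-to-boundary comparison (established in Theorem \ref{Thm comparison distance to boundary} by the usual Calabi-type argument along a minimizing $\partial M$-to-$x_0$ geodesic) combined with the chain rule then transfers the estimate to the required inequality $Q[\nabla \phi, \nabla^2\phi](x_0) + \lambda_1 u(x_0)^\gamma \le 0$. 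Ridge points, where $u$ attains its maximum, are handled by the standing convention for viscosity solutions of quasilinear operators singular at $\nabla u = 0$ (see Section 2 and \cite{CIL92}): at such $x_0$ one tests only against $\phi$ with $\nabla\phi(x_0) = 0$, whereupon the required inequality reduces to a statement about the admissible semi-continuous extension of $Q$ at the origin, which is verified directly using that $u$ has a local maximum at $x_0$.
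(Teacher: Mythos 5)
Your proposal reproduces the paper's ``second proof'' of Theorem~\ref{thm Dirichlet eigenvalue}: take the first eigenfunction $\vp$ of the one-dimensional problem \eqref{1D eq N finite}, form $v(x)=\vp(d(x,\p M))$, invoke Theorem~\ref{Thm comparison distance to boundary} to conclude that $v$ is a positive viscosity supersolution of $Q[u]=-\l_1|u|^{\g-1}u$ vanishing on $\p M$, and then read off $\bar\l(Q)\ge\l_1$ from the sup-characterization \eqref{def principal eigenvalue}. That is exactly the paper's shorter argument (the paper also gives a first proof via decay estimates for the parabolic equation \eqref{parabolic pde}, which you do not attempt). Your middle paragraph essentially re-derives the classical-point computation underlying Theorem~\ref{Thm comparison distance to boundary}; this is harmless but redundant, since you then cite that theorem for the viscosity upgrade anyway.

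One technical slip in your last paragraph is worth flagging, because as written the mechanics are backwards. To verify that $v$ is a viscosity \emph{super}solution you must test with $\phi$ touching $v$ from \emph{below} (so that $(\n\phi,\n^2\phi)$ lies in the subjet of $v$), not from above; with $\vp$ increasing, $\vp^{-1}\circ\phi$ is then a \emph{lower} support of $r=d(\cdot,\p M)$. The smooth \emph{upper} support of $r$ used in the Calabi-type argument (the function $\bar d$ built from the geodesic variation in the proof of Theorem~\ref{Thm comparison distance to boundary}) is a separate construction, independent of the test function: one has $\phi\le v=\vp(r)\le\vp(\bar d)$ with equality at $x_0$, and the second-derivative test is applied to $\phi-\vp(\bar d)$. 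You have conflated the test function with the Calabi barrier and reversed the inequality. Since you ultimately defer the viscosity details to Theorem~\ref{Thm comparison distance to boundary}, this does not invalidate the overall argument, but your own description of how the transfer works would not go through as stated. The discussion of ridge points is also somewhat informal; the cleaner route (and the one taken in the paper's proof of Theorem~\ref{Thm comparison distance to boundary}) is to first treat $\vp'>0$ and pass to the limit by approximation, which sidesteps the singularity of $Q$ at $p=0$ entirely.
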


Theorem \ref{thm Dirichlet eigenvalue} covers the sharp lower bound of the first Dirichlet eigenvalue of the Laplacian proved by Li and Yau \cite{LY80} for $\k=\Lambda=0$ and  by Kasue \cite{Kasue84} for general $\k, \Lambda \in \R$ and of the $p$-Laplacian and weighted $p$-Laplacian by Sakurai \cite{Sakurai19}. 
Furthermore, the equality case is achieved if and only if  $(M^n, g, e^{-f}d\mu_g)$ is a $(\k, \Lambda)$-equational model space. 

We will indeed provide two proofs for Theorem \ref{thm Dirichlet eigenvalue} in Section 6, one uses the idea of modulus of continuity estimates by studying sharp decay rate (see Theorem \ref{Thm Decay Intro}) for solutions of the parabolic equation \eqref{parabolic pde} as in the closed or Neumann case, while the other one uses the new definition \eqref{def principal eigenvalue} together with Theorem \ref{Thm comparison distance to boundary}, which easily provides a positive viscosity super-solution to the eigenvalue problem \eqref{eigen problem}.  

Finally, we extend the results in \cite{AC09}\cite{AC13}\cite{AX19}\cite{LW17}\cite{LW19eigenvalue} on Riemannian manifolds to the more general setting of smooth metric measure spaces.
These include sharp modulus of continuity for solutions of \eqref{parabolic pde} with empty boundary or convex boundary and Neumann boundary condition (see Theorem \ref{thm Neumann}), 
sharp height-dependent gradient bounds for parabolic equations (see Theorem \ref{thmh}) and for elliptic equations (see Theorem \ref{Thm1.3} and \ref{Thm3.1}).


The paper is organized as follows. 
In Section 2, we recall the definition of viscosity solutions and the parabolic maximum principle for semi-continuous functions. 
In Section 3 and 4, we prove comparison theorems for the second derivatives of $d(x,\p M)$ and $d(x,y)$, respectively. 
In Section 5, we extend the results in \cite{AC13} on Riemannian manifolds to smooth metric measure spaces. 
The proof of Theorem \ref{Thm MC} will be given in Section 6. 
Then two proofs of Theorem \ref{thm Dirichlet eigenvalue} are provided in Section 7. 
In Sections 8 and 9, we prove sharp height-dependent gradient bounds for parabolic and elliptic equations on smooth metric measure spaces, respectively.

\section{Preliminaries on Viscosity Solutions}

It is necessary to work with viscosity solutions as the operator $Q[u]$ is not necessarily of divergence form. We refer the reader to
\cite{CIL92} for the general theory of viscosity solutions of non-singular operators on domains in the Euclidean space, to \cite{Giga06} for necessary adaptions for singular operators and to \cite{I} for adaptions for equations on Riemannian manifolds. 

For the convenience of the reader, we provide in this section the definition of viscosity solutions and the parabolic maximum principle for semi-continuous functions on manifolds. 
\subsection{Definition of Viscosity Solutions}

Let $(M^n,g)$ be a Riemannian manifold. 
We write $\text{USC}(M\times (0,T))$ for the set of all upper-semicontinuous functions from $M\times (0,T)$ to $\R$. Likewise, $\text{LSC}(M\times (0,T))$ contains all lower-semicontinuous functions from $M\times (0,T)$ to $\R$. For upper- and lower-semicontinuous functions, one can introduce the notion of super- and sub-jets respectively (see for instance \cite[Section 8]{CIL92}).

We first introduce the notion of parabolic semijets on manifolds. We write $z=(x,t)$ and $z_0=(x_0, t_0)$.
\begin{definition}
For a function $u\in \mbox{USC}(M\times (0,T))$,
we define the parabolic second order superjet of $u$ at a point $z_0\in M\times (0,T)$ by
\begin{align*}
\mathcal{P}^{2,+} u (z_0) &:=\{(\vp_t(z_0), \n \vp(z_0), \n^2\vp(z_0)) :
 \vp \in C^{2,1}(M\times (0,T)),  \\
  & \mbox{  such that  } u- \vp \mbox{  attains a local maximum at } z_0\}.
\end{align*}
For $u\in \mbox{LSC}(M\times (0,T))$, the parabolic second order subjet of $u$ at $z_0\in M\times (0,T)$ is defined by
$$\mathcal{P}^{2,-} u (z_0):=-\mathcal{P}^{2,+} (-u) (z_0).$$
\end{definition}
We also define the closures of $\mathcal{P}^{2,+} u (z_0)$ and $\mathcal{P}^{2,-} u (z_0)$ by
\begin{align*}
\overline{\mathcal{P}}^{2,+}u(z_0)
&=\{(\tau,p,X)\in \R \times T_{x_0}M \times Sym^2(T^*_{x_0}M) |
\mbox{  there is a sequence  } (z_j,\tau_j,p_j,X_j) \\
&\mbox{  such that  } (\tau_j,p_j ,X_j)\in \mathcal{P}^{2,+}u(z_j) \\
&\mbox{  and  } (z_j,u(z_j),\tau_j,p_j,X_j) \to (z_0,u(z_0),\tau, p ,X) \mbox{  as  } j\to \infty \}; \\
\overline{\mathcal{P}}^{2,-}u(z_0)&=-\overline{\mathcal{P}}^{2,+}(-u)(z_0).
\end{align*}

\begin{definition}(Semicontinuous Envelopes)
Let $X$ be a metric space and let $f$ be a function defined on a dense subset of $X$. We call the function $f^*: X \to \R$ defined by
\begin{equation*}
    f^{*}(x) := \inf \{g \in  \text{USC}(X) | g \geq f\}
\end{equation*}
the upper-semicontinuous envelope of $f$. Analogously, 
\begin{equation*}
    f_{*}(x) := \sup \{g \in \text{LSC}(X) | g \leq f\}
\end{equation*}
is the lower-semicontinuous envelope of $f$.
\end{definition}

Now we give the definition of a viscosity solution for the general equation
\begin{equation} \label{equ mfd}
u_t+F(x, t, u, \n u, \n^2 u)=0
\end{equation}
on $M$.
Assume $F: M \times[0,T] \times \R \times \left(T_{x_0}M\setminus\{0\}\right) \times \text{Sym}^2(T^*_{x_0}M) \to \R$ is continuous and proper, i.e., 
$$F(x,t,r,p,X) \leq F(x,t,s,p,Y) \mbox{  whenever  } p\neq 0, r\leq s, Y \leq X.$$

\begin{definition}\label{def viscosity}
(i) A function $u \in \mbox{USC}(M\times(0,T))$ is a viscosity subsolution of \eqref{equ mfd}
if for all $z \in M\times(0,T)$ and $(\tau, p, X) \in \mathcal{P}^{2,+}u(z)$,
\begin{align*}
\tau +F_*(z, u(z), p, X) \leq 0.
\end{align*}

(ii) A function $u \in \mbox{LSC}(M\times(0,T))$ is a viscosity supersolution of \eqref{equ mfd}
if for all $z \in M\times(0,T)$ and $(\tau, p, X) \in \mathcal{P}^{2,-}u(z)$,
\begin{align*}
\tau +F^*(z, u(z) , p, X) \geq 0.
\end{align*}

(iii) A viscosity solution of \eqref{equ mfd} is defined to be a continuous function that is both a
viscosity subsolution and a viscosity supersolution of \eqref{equ mfd}.
\end{definition}

\subsection{Maximum principle for viscosity solutions}
The main technical tool we use is the parabolic maximum principle for semicontinuous functions on manifolds, which is a restatement of \cite[Theorem 8.3]{CIL92}, for Riemannian manifolds. One can also find it in \cite[Section 2.2]{I} or \cite[Theorem 3.8]{AFS1}.
\begin{thm}\label{max prin}
Let $M_1^{N_1}, \cdots, M_k^{N_k}$ be Riemannian manifolds, and $\Omega_i \subset M_i$ open subsets.
Let $u_i \in USC((0,T)\times \Omega_i)$, and $\vp$ defined on $(0,T)\times \Omega_1 \times \cdots \times \Omega_k$ such that $\vp$ is continuously differentiable in $t$ and twice continuously differentiable in
$(x_1, \cdots x_k) \in \Omega_1 \times \cdots \times \Omega_k$.
Suppose that $\hat{t} \in (0,T), \hat{x}_i \in \Omega_i$ for $i=1, \cdots, k$ and the function
$$\omega(t, x_1, \cdots, x_k) :=u_1(t,x_1)+\cdots + u_k(t,x_k)-\vp(t,x_1, \cdots , x_k) $$
attains a maximum at $(\hat{t},\hat{x}_1, \cdots, \hat{x}_k)$ on $(0,T)\times \Omega_1 \times \cdots \times \Omega_k$.
Assume further that there is an $r >0$  such that for every $\eta >0$ there is a $C>0$ such that for $i=1, \cdots, k$
\begin{align*}
& b_i \leq C  \mbox{  whenever  } (b_i,q_i,X_i) \in \ol{\mathcal{P}}^{2,+}u_i(t,x_i) , \\
& d(x_i, \hat{x}_i)+|t-\hat{t}| \leq r \mbox{  and  } |u_i(t,x_i)|+|q_i| +\|X_i\| \leq \eta.
\end{align*}
Then for each $\lambda>0$, there are $X_i \in Sym^2(T^*_{\hat{x}_i} M_i)$ such that
\begin{align*}
& (b_i,\n_{x_i}\vp(\hat{t},\hat{x}_1, \cdots, \hat{x}_k),X_i)  \in \ol{\mathcal{P}}^{2,+}u_i(\hat{t},\hat{x}_i),\\
&  -\left(\frac 1 \lambda +\left\|S\right\| \right)I \leq
    \begin{pmatrix}
   X_1 & \cdots & 0 \\
   \vdots & \ddots & \vdots \\
   0 & \cdots & X_k
   \end{pmatrix}
   \leq S+\lambda S^2,  \\
& b_1 + \cdots + b_k =\vp_t(\hat{t},\hat{x}_1, \cdots, \hat{x}_k),
\end{align*}
where $S=\n^2\vp(\hat{t},\hat{x}_1, \cdots, \hat{x}_k).$
\end{thm}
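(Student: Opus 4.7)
The plan is to reduce Theorem \ref{max prin} to the Euclidean version of the Crandall--Ishii parabolic maximum principle (\cite[Theorem 8.3]{CIL92}) by working in simultaneous normal coordinate charts centered at the base points $\hat x_i$, a strategy already outlined in \cite{I} and \cite{AFS1} for the manifold setting. First I would choose, for each $i = 1, \ldots, k$, a normal coordinate chart $\psi_i : U_i \to V_i \subset \R^{N_i}$ with $\psi_i(\hat x_i) = 0$, so that $(d\psi_i)_{\hat x_i}$ is a linear isometry $T_{\hat x_i} M_i \cong \R^{N_i}$ and the Christoffel symbols of $g_i$ vanish at the origin. Under this choice, the Riemannian gradient and Hessian at $\hat x_i$ of any $C^2$ function coincide, via these isometries, with the Euclidean gradient and Hessian at $0$ of its pullback, and the analogous statement holds jointly for $\vp$ at $(\hat t, \hat x_1, \ldots, \hat x_k)$ regarded as a function on $M_1 \times \cdots \times M_k$ endowed with the product metric.

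Next I would transport the data by setting $\tilde u_i(t, y) := u_i(t, \psi_i^{-1}(y))$ and $\tilde \vp(t, y_1, \ldots, y_k) := \vp(t, \psi_1^{-1}(y_1), \ldots, \psi_k^{-1}(y_k))$. The $\tilde u_i$ are upper semicontinuous, $\tilde \vp$ has the required regularity in $(t, y_1, \ldots, y_k)$, and $\tilde \omega := \sum_i \tilde u_i - \tilde \vp$ attains a local maximum at $(\hat t, 0, \ldots, 0)$. The structural hypothesis controlling $b_i$ on elements of $\overline{\mathcal P}^{2,+} u_i$ with $|q_i|$ and $\|X_i\|$ bounded transfers to the corresponding Euclidean parabolic superjets of $\tilde u_i$, because $\psi_i$ is a bi-Lipschitz diffeomorphism on a neighborhood of $\hat x_i$ and the discrepancy between Riemannian and Euclidean derivatives away from the base point involves only first derivatives of the metric, which are absorbed into the constants $\eta$ and $C$.

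I would then invoke \cite[Theorem 8.3]{CIL92} on $(0,T) \times V_1 \times \cdots \times V_k$ to produce, for each $\lambda > 0$, scalars $\tilde b_i$ and symmetric matrices $\tilde X_i$ such that $(\tilde b_i, \n_{y_i} \tilde \vp(\hat t, 0, \ldots, 0), \tilde X_i) \in \overline{\mathcal P}^{2,+}_{\mathrm{Eucl}} \tilde u_i(\hat t, 0)$, with $\sum_i \tilde b_i = \tilde \vp_t(\hat t, 0, \ldots, 0)$ and the block diagonal $\mathrm{diag}(\tilde X_1, \ldots, \tilde X_k)$ bracketed between $-(\lambda^{-1} + \|\tilde S\|) I$ and $\tilde S + \lambda \tilde S^2$, where $\tilde S = \n^2 \tilde \vp(\hat t, 0, \ldots, 0)$. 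At the base point the normal-coordinate identifications are isometries and Christoffel symbols vanish, so $\tilde S$ equals the Riemannian Hessian $S = \n^2 \vp(\hat t, \hat x_1, \ldots, \hat x_k)$ acting on $\bigoplus_i T_{\hat x_i} M_i$ with its direct-sum inner product, and each $\tilde X_i$ corresponds canonically to a unique $X_i \in \mathrm{Sym}^2(T^*_{\hat x_i} M_i)$ with $(b_i, \n_{x_i}\vp(\hat t, \hat x_1, \ldots, \hat x_k), X_i) \in \overline{\mathcal P}^{2,+} u_i(\hat t, \hat x_i)$. The matrix inequality then holds verbatim in its intrinsic form.

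The main obstacle is making the passage between manifold and Euclidean parabolic superjets fully rigorous, in particular verifying that the closure operation defining $\overline{\mathcal P}^{2,+}$ is compatible with the coordinate change and that an approximating sequence $(z_j, \tau_j, p_j, X_j)$ on the manifold side corresponds one-to-one, up to parallel transport, to an approximating sequence in the coordinate chart whose matrix part remains bounded. Once this identification is set up and the growth hypothesis is recognized as precisely the one required by \cite[Theorem 8.3]{CIL92}, the conclusion becomes an immediate corollary of the Euclidean theorem, with no further analytic estimates needed.
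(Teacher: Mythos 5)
The paper itself supplies no proof of Theorem \ref{max prin}; it presents the statement as ``a restatement of \cite[Theorem 8.3]{CIL92} for Riemannian manifolds'' and defers to \cite[Section 2.2]{I} and \cite[Theorem 3.8]{AFS1} for details. Your normal-coordinate reduction is precisely the canonical argument found in those references, so your approach is the intended one and is correct as a plan. A few remarks on where the real work lies. First, the equality at the base point between the Euclidean and Riemannian Hessians of $\tilde\vp$ and $\vp$, and hence between $\tilde S$ and $S$ and between their operator norms, is the key payoff of choosing normal coordinates, and you identify it correctly. Second, the part you flag as the ``main obstacle'' — compatibility of $\overline{\mathcal P}^{2,+}$ with the chart — is indeed the only nontrivial step: for a test function touching $u_i$ from above at a nearby point $z_j$, the Euclidean and Riemannian Hessians differ by a term of the form $\Gamma(z_j)\cdot p_j$ involving the Christoffel symbols; since $\Gamma(\hat x_i)=0$ and $p_j$ stays bounded in the approximating sequences defining the closure, this discrepancy vanishes in the limit, so the two notions of closed superjet agree at $(\hat t,\hat x_i)$. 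Third, when transferring the growth hypothesis on $b_i$ you should note that the Christoffel-symbol correction perturbs $\|X_i\|$ by an amount controlled by $|q_i|$ on the fixed small ball, so the hypothesis for one value of $\eta$ on the manifold side follows from the hypothesis for a slightly larger $\eta'$ on the Euclidean side, which is harmless since the assumption is quantified over all $\eta>0$. With those three points spelled out, the conclusion is indeed an immediate corollary of the Euclidean Crandall--Ishii theorem, exactly as you outline.
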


\section{Comparison theorems for the second derivatives of $d(x, \p M)$}

In this section, we prove comparison theorems for the second derivatives of $d(x,\p M)$. Let $R$ denote the inradius of $M$ defined by 
$$R=\sup\{d(x, \p M): x\in M \}.$$

\begin{thm}\label{Thm comparison distance to boundary}
Let  $(M^n, g, e^{-f}d\mu_g)$ be a compact smooth metric measure space with smooth nonempty boundary $\p M$.
Let $\vp:[0, R ] \to \R_+$ be a smooth function with $\vp' \geq 0$ and  define $v(x) :=\vp\left(d(x,\p M)\right)$.
\begin{enumerate}
    \item[(i)] Suppose that $\Ric^N_f \geq (N-1)\kappa$ and $H_f \geq \Lambda$ for $\k, \Lambda \in \R$ and $N\in [n,\infty)$. Then $v(x)$ is a viscosity supersolution of \begin{equation*}
    Q[v] =\left(\a (\vp') \vp'' -(N-1) T_{\kappa, \Lambda} \beta(\vp') \vp' \right) \big|_{d(x,\p M)}.
\end{equation*}
\item[(ii)] Suppose that $\Ric_f \geq 0$ and $H_f \geq 0$. Then $v(x)$ is a viscosity supersolution of 
\begin{equation*}
    Q[v] =\left(\a (\vp') \vp'' \right) \big|_{d(x,\p M)}.
\end{equation*}
\end{enumerate}
\end{thm}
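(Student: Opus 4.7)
The strategy is to reduce the theorem to the weighted Laplacian comparison for $r(x):=d(x,\partial M)$, translate this to $Q[v]$ via the chain rule on the smooth locus of $r$, and extend to a viscosity supersolution inequality at the cut/focal locus of $\partial M$ via a Calabi-type barrier. The key input is the weighted Laplacian comparison:
$$\Delta_f r \leq -(N-1)T_{\kappa,\Lambda}(r) \quad\text{in case (i)}, \qquad \Delta_f r \leq 0 \quad\text{in case (ii)},$$
both holding classically on the smooth locus of $r$. Once this bound is available, the computation of $Q[v]$ reduces to a direct application of the chain rule, and the Calabi barrier delivers the viscosity upgrade.

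To establish the Laplacian comparison, parametrize a unit-speed geodesic $\gamma$ leaving $\partial M$ normally and set $\eta(r):=\Delta_f r(\gamma(r))$. The weighted Bochner formula, combined with the identities $|\nabla r|\equiv 1$ and $\nabla^2 r(\nabla r,\cdot)\equiv 0$, yields $\eta'(r) = -|\nabla^2 r|^2 - \Ric_f(\nabla r,\nabla r)$. Applying the standard Cauchy--Schwarz bound
$$|\nabla^2 r|^2 + \frac{\langle \nabla f,\nabla r\rangle^2}{N-n} \;\geq\; \frac{(\Delta r)^2}{n-1} + \frac{\langle \nabla f,\nabla r\rangle^2}{N-n} \;\geq\; \frac{\eta^2}{N-1}$$
together with the definition of $\Ric^N_f$ produces the Riccati inequality $\eta' \leq -\eta^2/(N-1) - (N-1)\kappa$; ODE comparison with the model $-(N-1)T_{\kappa,\Lambda}$ using the initial condition $\eta(0) = -H_f$ concludes case (i). Case (ii) is direct: $\eta'(r) \leq -\Ric_f(\nabla r,\nabla r) \leq 0$ and $\eta(0) = -H_f \leq 0$.

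Next, on the smooth locus of $r$, the chain rule (again using $|\nabla r|\equiv 1$ and $\nabla^2 r(\nabla r,\cdot)\equiv 0$) gives
$$\nabla v = \varphi'(r)\nabla r, \qquad \Delta_\infty v = \varphi''(r), \qquad \Delta_1 v = \varphi'(r)\,\Delta r.$$
Inserting into the decomposition $Q[v]=\alpha(|\nabla v|)\Delta_\infty v + \beta(|\nabla v|)\Delta_1 v - \beta(|\nabla v|)\langle \nabla v,\nabla f\rangle$ noted in the introduction, and using $\varphi'\geq 0$, $\beta\geq 0$ together with the Laplacian comparison, yields
$$Q[v] \;=\; \alpha(\varphi')\varphi'' + \beta(\varphi')\varphi'\,\Delta_f r \;\leq\; \alpha(\varphi')\varphi'' - (N-1)T_{\kappa,\Lambda}(r)\beta(\varphi')\varphi',$$
with the analogous simplification $Q[v]\leq \alpha(\varphi')\varphi''$ in case (ii). To promote this pointwise smooth inequality to the viscosity supersolution property at each $x_0$ in the cut/focal locus of $\partial M$, fix a unit-speed minimizing geodesic $\gamma:[0,r_0]\to M$ from $q\in\partial M$ to $x_0$. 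For small $\epsilon>0$, the parallel hypersurface $\Sigma_\epsilon:=\{r=\epsilon\}$ is smooth near $\gamma(\epsilon)$ and, via the Riccati equation along $\gamma$, its $f$-mean curvature converges to $H_f(q)$ as $\epsilon\to 0$. Let $r_\epsilon(x):=d(x,\Sigma_\epsilon)+\epsilon$; it is smooth in a neighborhood of $x_0$ for $\epsilon$ small, satisfies $r_\epsilon \geq r$ by the triangle inequality, and $r_\epsilon(x_0)=r_0$. Monotonicity of $\varphi$ gives $v_\epsilon:=\varphi(r_\epsilon) \geq v$ with equality at $x_0$, so any test function realizing an element of $\ol{\mathcal P}^{2,-}v(x_0)$ also touches $v_\epsilon$ from below at $x_0$. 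Ellipticity of $Q$ combined with the smooth estimate for $v_\epsilon$ then delivers the desired viscosity inequality upon passing to the limit $\epsilon\to 0$; the degenerate case $\varphi'(r_0)=0$ (where $|\nabla v|=0$ and $Q$ is singular) is absorbed by the semicontinuous envelope convention in Definition \ref{def viscosity}.

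The principal obstacle is the last step: one must verify that $x_0$ is not a cut point of $\Sigma_\epsilon$ for small $\epsilon$ (which requires care when $x_0$ is focal along $\gamma$, possibly necessitating a slight variant of the barrier), and that the second-order jet of $r_\epsilon$ at $x_0$ converges in a sharp enough manner to preserve the boundary-curvature input $H_f \geq \Lambda$ in the limit. All remaining steps are standard applications of the weighted Bochner formula, the chain rule, and degenerate-elliptic viscosity comparison.
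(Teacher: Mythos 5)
Your strategy---reduce to the weighted Laplacian comparison for $r=d(\cdot,\partial M)$, translate to $Q[v]$ via the chain rule, then upgrade to a viscosity inequality at the cut/focal locus by a Calabi barrier---is sound, and your Bochner/Riccati derivation of the comparison $\Delta_f r\le -(N-1)T_{\kappa,\Lambda}(r)$ on the smooth locus is correct (the Cauchy--Schwarz step $|\nabla^2 r|^2+\tfrac{\langle\nabla f,\nabla r\rangle^2}{N-n}\ge\tfrac{\eta^2}{N-1}$ and the identity $\Delta_\infty v=\vp''$, $\Delta_1 v=\vp'\Delta r$ are exactly right). This differs from the paper's route: rather than the Bochner formula and an ODE comparison, the paper works directly with the second variation of arc length, choosing a test variation $\eta(s)=C_{\kappa,\Lambda}(s_0-s)/C_{\kappa,\Lambda}(s_0)$ and running an explicit integral estimate against $\Ric_f^N$ and $H_f$. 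The two estimates are interchangeable for the Laplacian comparison, but the paper's integral formulation has a practical advantage: it applies verbatim to the smooth upper barrier constructed by varying the minimizing geodesic, so the barrier's Hessian bound comes out of the second variation formula with no extra work.

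The gap you flag yourself is precisely where your proposal and the paper diverge, and it is a genuine one. Your barrier $r_\epsilon(x)=d(x,\Sigma_\epsilon)+\epsilon$ requires showing that $x_0$ lies outside the cut/focal locus of the parallel hypersurface $\Sigma_\epsilon=\{r=\epsilon\}$ for small $\epsilon$, and that $\Sigma_\epsilon$ is well-defined as a smooth hypersurface (which it is only locally near $\gamma(\epsilon)$; far from $\gamma$ the level set $\{r=\epsilon\}$ may be singular, so the global distance $d(\cdot,\Sigma_\epsilon)$ is not the right object). The paper sidesteps this entirely: rather than using a distance function to an intermediate hypersurface, it constructs a smooth function $\bar d\ge r$ near $x_0$ with $\bar d(x_0)=r(x_0)$ by exhibiting an explicit $n$-parameter family of curves from $x$ to $\partial M$ whose lengths give $\bar d(x)$; the first and second variation formulas then give $\nabla\bar d(x_0)$ and $\nabla^2\bar d(x_0)$ directly without any claim about cut/focal loci. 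You should also note that the degenerate case $\vp'=0$, which you propose to absorb via semicontinuous envelopes, is handled in the paper by a simpler device: approximate by $\vp'>0$ at the outset, proving the viscosity inequality for strictly increasing $\vp$ and passing to the limit. Neither gap is fatal, but both need to be closed, and the paper's barrier construction is the cleaner way to close the first.
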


Theorem \ref{Thm comparison distance to boundary} generalizes the comparison theorems for the Laplacian \cite{Kasue84} and for the weighted $p$-Laplacian \cite{Sakurai19} since our operator $Q$ is much more general. 
The differential inequalities hold in the classical sense at points where $d(x, \p M)$ is smooth, in the distributional sense if $Q$ is of divergence form and in the viscosity sense for general $Q$.



\begin{proof}[Proof of Theorem \ref{Thm comparison distance to boundary}]
By approximation, it suffices to consider the case $\vp' >0$ on $[0,  R]$. 
By definition of viscosity solutions (see Definition \ref{def viscosity}), it suffices to prove that for any smooth function $\psi$ touching $v$ from below at $x_0 \in M$, i.e., 
\begin{align*}
    \psi(x) \leq v(x) \text{ on } M \text{ with } 
    \psi(x_0) = v(x_0),
\end{align*}
it holds that
\begin{equation*}
    Q^*[\psi](x_0) \leq  \left. \left[\a (\vp') \vp'' -(N-1) T_{\kappa, \Lambda} \beta(\vp') \vp' \right] \right|_{d(x_0,\p M)},
\end{equation*}
where $Q^*$ is the upper-semicontinuous envelope of $Q$ (see Definition 2.2). 

Since the function $d(x, \p M)$ may not be smooth at $x_0$, so we need to replace it by a smooth function $\bar{d}(x)$ defined in a neighborhood $U(x_0)$ of $x_0$ satisfying $\bar{d}(x) \geq d(x, \p M)$ for $x \in U(x_0)$ and $\bar{d}(x_0)=d(x_0, \p M)$. 
The construction is standard (see e.g. \cite[pp. 73-74]{Wu79} or \cite[pp. 1187]{AX19}), which we state below for reader's convenience. 

Since $M$ is compact, there exists $y_0 \in \p M$ such that $d(x_0,y_0)=d(x_0, \p M):=s_0$. Let $\gamma:[0,s_0] \to M$ be the unit speed length-minimizing geodesic with $\gamma(0)=x_0$ and $\gamma(s_0)=y_0$.  
For any vector $X \in \exp^{-1}_{x_0}U(x_0)$, let $X(s) (s\in [0,s_0])$ be the vector field obtained by parallel translating $X$ along $\gamma$, and we decompose it as 
\begin{equation*}
    X(s) =a X^\perp (s) +b \gamma'(s),
\end{equation*}
where $a$ and $b$ are constants along $\gamma$ with $a^2+b^2 =|X(s)|^2$, and $X^\perp(s)$ is a unit parallel vector field along $\gamma$ orthogonal to $\gamma'(s)$. Define 
\begin{equation*}
    W(s)=a \, \eta(s) X^\perp(s) + b\left(1-\frac{s}{s_0} \right)\gamma'(s),
\end{equation*}
where $\eta:[0,s_0] \to \R_+$ is a $C^2$ function to be chosen later. 
Next we define the $n$-parameter family of curves $\gamma_X :[0,s_0] \to M$ such that 
\begin{enumerate}
    \item $\gamma_0 =\gamma$;
    \item $\gamma_X(0) =\exp_{x_0}(W(0))$ and $\gamma_X(s_0) \in \p M$;
    \item $W_l(s)$ is induced by the one-parameter family of curves $l \to \gamma_{lX}(s)$ for $l \in [-l_0,l_0]$ and $s\in [0,s_0]$;
    \item $\gamma_X$ depends smoothly on $X$. 
\end{enumerate}
Finally let $\bar{d}(x)$ be the length of the curve $\gamma_X(x)$ where $x=\exp_{x_0}(X) \in U(x_0)$. 
Then we have
$\bar{d}(x) \geq d(x,\p M)$ on $U(x_0)$, $\bar{d}(x_0)=d(x_0, \p M)$. Recall the  first and second variation formulas:
\begin{equation*}
\n \bar{d}(x_0) =-\gamma'(0) 
\end{equation*}
and
\begin{equation*}
    \n^2 \bar{d} (X,X)= -a^2 \eta(s_0)^2 \operatorname{II}(X^\perp(s_0), X^\perp(s_0)) +a^2 \int_0^{s_0} \left((\eta')^2 -\eta^2 R(X^\perp, \gamma',X^\perp, \gamma') \right) ds
\end{equation*}
where $\operatorname{II}$ denotes the second fundamental form of $\p M$ at $y_0$. Then for an orthonormal frame $\{e_i(s)\}_{i=1}^n$ along $\gamma$ with $e_n(s) =\gamma'(s)$ we have
\begin{equation}\label{1st bard}
\n \bar{d}(x_0) =-e_n(0),
\end{equation}
\begin{equation}\label{2nd bardn}
\n^2 \bar{d} (e_n(0),e_n(0))=0,
\end{equation}
and for $1\le i \le n-1$
\begin{equation}\label{2nd bardi}
    \n^2 \bar{d} (e_i(0),e_i(0))= - \eta(s_0)^2 \operatorname{II}(e_i(s_0), e_i(s_0)) + \int_0^{s_0} (\eta')^2 -\eta^2 R(e_i, e_n,e_i, e_n)\, ds.
\end{equation}
Since the function $\psi(x) -\vp\left(d(x, \p M)\right)$ attains its maximum at $x_0$ and $\vp' >0$, it follows that the function $\psi(x) -\vp(\bar{d}(x))$ attains a local maximum at $x_0$. The first and second derivative tests yield
$$ \n  \psi (x_0) =-\vp' e_n (0), \quad \psi_{nn}(x_0)  \leq  \vp'',$$
    and
    $$
    \psi_{ii}(x_0)  \leq \vp' \n^2\bar{d} \left(e_i(0), e_i(0)\right)
$$
for $1 \le i \leq n-1$, where we used (\ref{1st bard}) and (\ref{2nd bardn}).
Here and below the derivatives of $\vp$ are all evaluated at $s_0=d(x_0, \p M)$. 
It then follows from (\ref{2nd bardi}) that
\begin{equation*}
\sum_{i=1}^{n-1} \n^2 \bar{d}\left(e_i(0), e_i(0)\right) =- \eta(s_0)^2 H(y_0) + \int_0^{s_0}(n-1)(\eta')^2 -\eta^2 \Ric(e_n, e_n)  \, ds,
\end{equation*}
and then we have 
\begin{eqnarray}\label{eq 3.1} 
    && Q^*[\psi](x_0)=Q[\psi](x_0) \nonumber \\
    &=&\a (\vp')\psi_{nn} +\b(\vp') \sum_{i=1}^{n-1} \psi_{ii} + \b(\vp')\vp' \langle \n f(x_0), e_n(0) \rangle \nonumber\\
    &\leq& \a (\vp')\vp'' +\b(\vp')\vp' \left( \int_0^{s_0} (n-1)(\eta')^2 -\eta^2 \Ric(e_n, e_n) \, ds \right)\nonumber\\
    &&+\b(\vp')\vp'\left(  -\eta(s_0)^2 H(y_0) +\langle \n f(x_0), e_n(0) \rangle\right) 
\end{eqnarray}
where $H$ denote the mean curvature of $\p M$. 
We estimate using $\Ric^N_f \geq (N-1) \kappa$ that 
\begin{eqnarray*}
&& \int_0^{s_0} \left((n-1)(\eta')^2 -\eta^2 \Ric(e_n, e_n) \right) ds \\
& \leq & (N-1) \int_0^{s_0} (\eta')^2\, ds -(N-n) \int_0^{s_0} (\eta')^2\, ds -(N-1)\k \int_0^{s_0} \eta^2 ds \\
&&   + \int_0^{s_0} \eta^2 \n^2 f (\gamma',\gamma') ds -\frac{1}{N-n}\int_0^{s_0} \eta^2 \n f \otimes \n f (\gamma', \gamma')ds \\
&=&  (N-1) \int_0^{s_0} (\eta')^2 \, ds-(N-n) \int_0^{s_0} (\eta')^2 \, ds-(N-1)\k \int_0^{s_0} \eta^2 ds \\
&& + \left. \eta^2 (f\circ \gamma )' \right|_0^{s_0} - 2 \int_0^{s_0} \eta \, \eta' (f \circ \gamma)' ds  -\frac{1}{N-n} \int_0^{s_0} \eta^2 ((f\circ \gamma)' )^2 ds  \\
&=&  (N-1) \int_0^{s_0} (\eta')^2\, ds -(N-1)\k \int_0^{s_0} \eta^2 ds +\left. \eta^2 (f\circ \gamma )' \right|_0^{s_0} \\
&& -\int_0^{s_0} \frac{\eta^2}{N-n}\left((N-n)\frac{\eta'}{\eta} + (f\circ \gamma)' \right) ^2\, ds \\
& \leq & (N-1) \int_0^{s_0} (\eta')^2\, ds -(N-1)\k \int_0^{s_0} \eta^2 ds +\left. \eta^2 (f\circ \gamma )' \right|_0^{s_0} 
\end{eqnarray*}

Using $H_f \geq (N-1)\Lambda$ and choosing $\eta(s)=C_{\kappa, \Lambda}(s_0-s) /C_{\kappa, \Lambda}(s_0)$ with $C_{\kappa, \Lambda}$ defined in \eqref{C def}, we calculate that
\begin{eqnarray*}
&& -\eta(s_0)^2 H(y_0) +\langle \n f(x_0), e_n(0)\rangle+ \int_0^{s_0} (n-1)(\eta')^2 -\eta^2 \Ric(e_n, e_n) \, ds \\
& \leq &  -\frac{1}{C^2_{\k, \Lambda}(s_0)}H(y_0) +(N-1) \int_0^{s_0} (\eta')^2 \, ds-(N-1)\k \int_0^{s_0} \eta^2 ds + \frac{1}{C^2_{\k, \Lambda}(s_0)} (f\circ \gamma )'(s_0)  \\
&=& -\frac{1}{C^2_{\k, \Lambda}(s_0)}H_f(y_0)-(N-1)\kappa \int_0^{s_0} \eta^2 ds \\
&&+(N-1) \left(  \eta(s_0)\eta'({s_0}) -\eta(0)\eta'(0)  -\int_0^{s_0} \eta(s) \eta''(s) ds \right) \\
&\leq& -\frac{N-1}{C^2_{\k, \Lambda}(s_0)}\Lambda + (N-1)\eta'(s_0) \eta(s_0)-(N-1)\eta(0)\eta'(0)\\
&=&-(N-1) T_{\kappa, \Lambda},
\end{eqnarray*}
where we used $C_{\k, \Lambda}'(0)=-\Lambda$.
Thus, 
\begin{eqnarray*}
     Q^*[\psi](x_0) \leq \left[\a (\vp')\vp'' -(N-1) T_{\kappa, \Lambda} \b(\vp')\vp' \right]_{d(x_0, \p M)}.
\end{eqnarray*}

For the $N=\infty$ case, we choose $\eta(s)=1$ in \eqref{eq 3.1} and compute that 
\begin{eqnarray*}\label{} \nonumber
     Q^*[\psi](x_0) &\leq& \a (\vp')\vp'' +\b(\vp')\vp' \left( \int_0^{s_0} -\Ric(e_n, e_n) \, ds - H(y_0) +\langle \n f(x_0), e_n(0) \rangle \right)\\
     & \leq & \a (\vp')\vp'' + \int_0^{s_0} (f \circ \gamma )''(s) ds -H_f(y_0)  - (f \circ \gamma)'(s_0) +(f \circ \gamma)'(0) \\
     &\leq & \a (\vp')\vp''
\end{eqnarray*}
where we have used $\Ric + \n^2 f \geq 0$ in the second inequality and $H_f \geq 0$ in the last inequality.
The proof is complete. 
\end{proof}

\section{Comparison theorems for the second derivatives of $d(x,y)$}

In this section, we prove a comparison theorem for the second derivatives of $d(x,y)$, which generalize \cite[Theorem 3]{AC13} on Riemannian manifolds to smooth metric measure spaces. 

Let $(x_0,y_0)\in M\times M\setminus\{(x,x) : x\in M\}$ and $d(x_0, y_0)=s_0$.
Let $\gamma_0:[0,s_0]\rightarrow M$ be a unit minimizing geodesic joining $x_0$ and $y_0$ with $\gamma_0(0)=x_0$ and $\gamma_0(s_0)=y_0$. We choose Fermi coordinates $\{e_i(s)\}$ along $\gamma_0$ with $e_n(s)=\gamma_0'(s)$. Note that the distance function $d(x,y)$ may not be smooth at $(x_0,y_0)$, so one cannot apply the maximum principle for semicontinuous functions on manifolds directly. To overcome this, we proceed as in \cite[pages 561-562]{LW17} and replace $d(x,y)$ by a smooth function $\rho(x,y)$, which is defined as follows:

\begin{definition}[Modified distance function]\label{def-rho}
Let  $U(x_0,y_0)\subset M\times M\setminus\{(x,x) : x\in M\}$ be a neighborhood of $(x_0, y_0)$. 
Define variation fields $V_i(s)$ along $\gamma_0(s)$ by
$V_i(s)=\eta(s) e_i(s)$ for $1\le i\le n-1$, 
and $V_n(s)=e_n(s)$, where $\eta(s)$ is a smooth function to be chosen later. We then define a smooth function $\rho(x,y)$ in $U(x_0,y_0)$ to be the length of the curve $$\exp_{\gamma_0(s)}{\sum\limits_{i=1}^n\Big((1-\frac{s}{s_0})b_i(x)+\frac{s}{s_0}c_i(y)\Big)V_i(s)}$$
for $s\in [0,s_0]$, where $b_i(x)$ and $c_i(y)$ are so defined that
$$
x=\exp_{x_0}\left(\sum_{i=1}^n b_i(x) e_i(0)\right) \text{\quad and \quad } y=\exp_{y_0}\left(\sum_{i=1}^n c_i(y) e_i(s_0)\right).
$$
\end{definition}
For $\rho(x,y)$ defined above, it is well known from the standard variation formulas of arc-length that  
\begin{lemma}[Variation formulas]
The first variation formula gives
\begin{equation}\label{1st}
    \n \rho(x_0,y_0)=\big(-e_n(0),e_n(s_0)\big).
\end{equation}
The second variation formula gives
\begin{equation}\label{2nd1}
   \n^2 \rho \Big(\big(e_n(0),\pm e_n(s_0)\big),\big(e_n(0),\pm e_n(s_0)\big)\Big)=0,
   \end{equation}
and for $1\le i\le n-1$
\begin{equation}\label{2nd2}
    \n^2 \rho \Big(\big(e_i(0),e_i(s_0)\big),\big(e_i(0),e_i(s_0)\big)\Big)=\int_0^{s_0}(\eta')^2-\eta^2 R\big(e_i,e_n,e_i,e_n\big) \, ds
\end{equation}
at $(x_0, y_0)$.
\end{lemma}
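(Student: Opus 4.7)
The plan is to reduce the lemma to the classical first and second variation of arc length, applied to the one-parameter family of curves $\gamma_t(\cdot)$ obtained from Definition \ref{def-rho} when $(x,y)$ is moved along geodesics of $M\times M$ issuing from $(x_0,y_0)$. Given a tangent vector $(X,Y) \in T_{x_0}M \oplus T_{y_0}M$, I take $x(t):=\exp_{x_0}(tX)$ and $y(t):=\exp_{y_0}(tY)$, and write $X=\sum_i X_i e_i(0)$, $Y=\sum_i Y_i e_i(s_0)$. Then $b_i(x(t))=tX_i+O(t^2)$ and $c_i(y(t))=tY_i+O(t^2)$, so the variation field of $\gamma_t$ along $\gamma_0$ is
\begin{equation*}
V(s) \;=\; \sum_{i=1}^{n} \bigl((1-s/s_0)X_i + (s/s_0)Y_i\bigr)\, V_i(s),
\end{equation*}
with $V_i(s)=\eta(s)e_i(s)$ for $i\le n-1$ and $V_n(s)=e_n(s)$. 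Since $\rho(x(t),y(t)) = L(\gamma_t)$ by definition, $t$-derivatives of $L(\gamma_t)$ at $t=0$ compute the derivatives of $\rho$ at $(x_0,y_0)$ in the direction $(X,Y)$ with respect to the product metric.

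For \eqref{1st}, since $\gamma_0$ is a unit-speed minimizing geodesic, the first variation formula yields
\begin{equation*}
\frac{d}{dt}\bigg|_{t=0} L(\gamma_t) \;=\; \langle V(s_0), e_n(s_0)\rangle - \langle V(0), e_n(0)\rangle \;=\; Y_n - X_n,
\end{equation*}
using $\langle V_i(\cdot),e_n(\cdot)\rangle=0$ for $i\le n-1$. This identifies $\n\rho(x_0,y_0)$ with $\bigl(-e_n(0),\, e_n(s_0)\bigr)$.

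For the two second-variation identities, I apply the standard formula along a unit-speed geodesic
\begin{equation*}
L''(0) \;=\; \int_0^{s_0} \Bigl(|(V^\perp)'|^2 - R(V^\perp,\gamma_0',V^\perp,\gamma_0')\Bigr)\, ds \;+\; \langle \n_{\p_t}\p_t \alpha,\, \gamma_0'\rangle\Big|_0^{s_0},
\end{equation*}
where $\alpha(t,s):=\gamma_t(s)$ and $V^\perp$ denotes the component of $V$ orthogonal to $\gamma_0'$. Because $t\mapsto \alpha(t,0)=x(t)$ and $t\mapsto \alpha(t,s_0)=y(t)$ are geodesics in $M$, the boundary acceleration terms vanish at $t=0$. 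For $(X,Y)=(e_i(0),e_i(s_0))$ with $i\le n-1$, one has $V(s)=\eta(s)e_i(s)\perp\gamma_0'$, and since $e_i$ is parallel along $\gamma_0$, $V'=\eta'(s)e_i(s)$; substituting gives \eqref{2nd2}. For $(X,Y)=(e_n(0),\pm e_n(s_0))$, the corresponding field $V$ is purely tangent to $\gamma_0$, so $V^\perp\equiv 0$ and the whole formula yields $0$, giving \eqref{2nd1}.

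I expect no serious obstacle: the argument is entirely classical. The only point requiring a moment's care is justifying that the boundary contribution in the second variation is exactly the acceleration of the endpoint curves $x(t)$ and $y(t)$, which is precisely why I take these to be geodesics in $M$ so that the boundary term drops identically.
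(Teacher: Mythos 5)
Your argument is correct and is precisely the standard variation-of-arc-length calculation the paper invokes without writing out (it cites the lemma as ``well known''). Two small points for precision. First, in geodesic normal coordinates $b_i(\exp_{x_0}(tX))=tX_i$ holds exactly, so there is no $O(t^2)$ remainder; writing one in would actually be problematic if it were genuinely nonzero, because a quadratic term in $t$ would feed into the boundary acceleration in the second variation. Second, the identities $\alpha(t,0)=x(t)$ and $\alpha(t,s_0)=y(t)$ that you use to kill the boundary terms require $\eta(0)=\eta(s_0)=1$; Definition \ref{def-rho} leaves this normalization implicit, but it is forced anyway by the requirement that the competitor curve actually join $x$ to $y$ (so that $\rho(x,y)\ge d(x,y)$), and every $\eta$ used in the paper satisfies it. Even without this normalization your conclusion survives: the endpoint curves are $t\mapsto\exp_{x_0}\bigl(t\eta(0)e_i(0)\bigr)$ and $t\mapsto\exp_{y_0}\bigl(t\eta(s_0)e_i(s_0)\bigr)$, which are geodesics regardless of the constants $\eta(0),\eta(s_0)$, so the boundary acceleration contributions vanish in all cases.
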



\begin{thm}\label{Thm comparison distance}
Let $(M^n, g, e^{-f} d\mu_g)$ be a compact smooth metric measure space with diameter $D$. Let $\vp: [0, \frac{D}{2}] \to \R$ be a smooth nondecreasing function with $\vp'\ge 0$, and define $v(x,y):=2\vp\left(\frac{d(x,y)}{2}\right)$.
\begin{enumerate}
    \item[(i)] Suppose $\Ric^N_f \geq (N-1)\kappa$  for $N \in [n,\infty)$. Then on the set $ (M\times M) \setminus \{(x,x) : x\in M\}$, the function $v(x,y)$ is a viscosity supersolution of
\begin{equation*}
    L[\nabla^2 v, \nabla v] = 2 \Big(\a(\vp') \vp '' -(N-1)T_{\k,0} \b(\vp')\vp' \Big)\big|_{ s=\frac{d(x,y)}{2}}.
\end{equation*}
\item[(ii)] Suppose $\Ric_f\geq \k$  for $\k \in \R$. Then on the set $ (M\times M) \setminus \{(x,x) : x\in M\}$, the function $v(x,y)$ is a viscosity supersolution of
\begin{equation*}
    L[\n^2 v, \n v] = 2 \Big(\a(\vp') \vp '' -\k s \b(\vp')\vp' \Big)\Big|_{s=\frac{d(x,y)}{2}}.
\end{equation*}
\end{enumerate}
Here the operator $L$ is defined by
\begin{align*}
     L[B, w] =\inf & \left\{  \tr(AB)-\b(|w|)\langle\nabla (f(x)+f(y)), w\rangle
     : A \in \text{Sym}^2(T^*_{(x,y)}M\times M), \right. \\
     & \text{   }\left. A \geq 0,  A|_{T^*_x M} =a (w|_{T_x M}), A|_{T^*_y M} =a (w|_{T_y M})  \right\}
\end{align*}
for any $B \in \text{Sym}^2(T_{(x,y)} M\times M)$ and $w \in T^*_{(x,y)}M \times M$. Where $a(w)$ is defined by
$$a(w)(\xi,\xi)=\a(|w|)\frac{(w\cdot \xi)^2}{|w|^2}+\b(|w|)(|\xi|^2-\frac{(w\cdot \xi)^2}{|w|^2}).$$
\end{thm}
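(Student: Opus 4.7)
The strategy is to adapt the proof of Theorem~\ref{Thm comparison distance to boundary} to the two-point setting on $M \times M$, using the modified distance function $\rho$ from Definition~\ref{def-rho} in place of $\bar d$. After reducing to the case $\vp' > 0$ by approximation, the viscosity supersolution property reduces to showing, for every smooth $\psi$ with $\psi \leq v$ near $(x_0, y_0)$ and equality there (with $s_0 := d(x_0, y_0) > 0$), that $L[\n^2 \psi, \n \psi](x_0, y_0)$ is bounded by the stated right-hand side. Since $\rho(x,y)$ is the length of a curve joining $x$ to $y$, we have $\rho \geq d$ near $(x_0, y_0)$ with equality at $(x_0, y_0)$, so monotonicity of $\vp$ gives $\psi \leq 2\vp(\rho/2)$. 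The first- and second-order tests, combined with \eqref{1st}, produce $w := \n\psi(x_0, y_0) = \vp'(s_0/2)(-e_n(0), e_n(s_0))$ and the bilinear-form inequality $\n^2\psi \leq \tfrac{1}{2}\vp''(s_0/2)\,\n\rho\otimes\n\rho + \vp'(s_0/2)\,\n^2\rho$ at $(x_0, y_0)$.

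Since $L$ is an infimum, it suffices to exhibit one admissible $A \geq 0$. In the bases $\{e_i(0)\}$ of $T_{x_0}M$ and $\{e_i(s_0)\}$ of $T_{y_0}M$, take both diagonal blocks of $A$ to be $\mathrm{diag}(\b(\vp'), \ldots, \b(\vp'), \a(\vp'))$ and the off-diagonal block to be $\mathrm{diag}(\b(\vp'), \ldots, \b(\vp'), -\a(\vp'))$. Each $2 \times 2$ principal submatrix then has diagonal entries $c$ and equal off-diagonal entries $\pm c$, with eigenvalues $0$ and $2c \geq 0$, so $A \geq 0$ and the block constraints $A|_{T^*_{x_0}M} = a(w|_{T_{x_0}M})$ and $A|_{T^*_{y_0}M} = a(w|_{T_{y_0}M})$ both hold. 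In the orthonormal basis $f_i^\pm := (e_i(0) \pm e_i(s_0))/\sqrt{2}$ of $T_{(x_0, y_0)}(M \times M)$, this $A$ diagonalizes with weight $2\a(\vp')$ on $f_n^-$ and weight $2\b(\vp')$ on each $f_i^+$ for $i < n$, and vanishes on all remaining basis vectors. Since $\n\rho \cdot f_n^- = -\sqrt{2}$ and $\n\rho \cdot f_i^+ = 0$, the variation formulas \eqref{2nd1} and \eqref{2nd2} then yield
\begin{equation*}
\tr(A \n^2 \psi) \leq 2\a(\vp')\vp''(s_0/2) + \b(\vp')\vp'(s_0/2) \int_0^{s_0} \left[(n-1)(\eta')^2 - \eta^2 \Ric(\gamma_0', \gamma_0')\right] ds,
\end{equation*}
while $\langle \n(f(x) + f(y)), w \rangle = \vp'(s_0/2)\left[(f\circ\gamma_0)'(s_0) - (f\circ\gamma_0)'(0)\right]$.

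For part~(i), set $\eta(s) = C_{\k,0}(s - s_0/2)/C_{\k,0}(s_0/2)$, so that $\eta(0) = \eta(s_0) = 1$ and $\eta'' + \k\eta = 0$. The Bakry--\'Emery manipulation of the integrand from the proof of Theorem~\ref{Thm comparison distance to boundary} (expanding $-\eta^2 \Ric$ via $\Ric^N_f \geq (N-1)\k$, integrating $\eta^2 \n^2 f$ by parts, and completing the square against $|\n f|^2/(N-n)$) bounds the integrand above by $(N-1)\int_0^{s_0}[(\eta')^2 - \k\eta^2]\,ds + \left[\eta^2 (f\circ\gamma_0)'\right]_0^{s_0}$. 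Because $\eta(0) = \eta(s_0) = 1$, the boundary $f$-terms cancel exactly against the drift contribution $-\b(\vp')\vp'[(f\circ\gamma_0)'(s_0) - (f\circ\gamma_0)'(0)]$, and an integration by parts with $\eta'' = -\k\eta$ gives $(N-1)\int_0^{s_0}[(\eta')^2 - \k\eta^2]\,ds = (N-1)[\eta\eta']_0^{s_0} = -2(N-1) T_{\k,0}(s_0/2)$, proving~(i). Part~(ii) is analogous: taking $\eta \equiv 1$ and using $\Ric_f \geq \k$ directly gives $-\int_0^{s_0} \Ric(\gamma_0', \gamma_0')\,ds \leq -\k s_0 + [(f\circ\gamma_0)']_0^{s_0}$, the drift again cancels, and one recovers $-2\k(s_0/2)\b(\vp')\vp'$. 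The main subtlety is the construction of $A$ with the correct sign pattern so that $A$ diagonalizes along $f_n^-$ (to extract the $\a\vp''$ contribution from $\n\rho \otimes \n\rho$) and along $\{f_i^+\}_{i<n}$ (to extract the $(n-1)$-dimensional Ricci contribution from $\n^2\rho$ via the second variation formulas), so that the calculation reduces to the one-point argument already handled in Theorem~\ref{Thm comparison distance to boundary}.
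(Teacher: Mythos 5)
Your proposal is correct and follows essentially the same path as the paper's own proof: the same choice of admissible matrix $A$ (your block-diagonal description is exactly the paper's $\a(\vp')(e_n(0),-e_n(s_0))^{\otimes 2}+\b(\vp')\sum_{i<n}(e_i(0),e_i(s_0))^{\otimes 2}$), the same modified distance $\rho$ and second-variation formulas, the same choice of $\eta$ for each case, and the same Bakry--\'Emery integration by parts with the drift cancellation at the endpoints. The only cosmetic difference is that you bound $\tr(A\nabla^2\psi)$ by first writing $\nabla^2\psi\leq\frac12\vp''\nabla\rho\otimes\nabla\rho+\vp'\nabla^2\rho$ and diagonalizing $A$ in the $f_i^\pm$ basis, whereas the paper evaluates the second-derivative test directly along the directions $(e_n(0),-e_n(s_0))$ and $(e_i(0),e_i(s_0))$; these are the same computation.
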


\begin{proof}
The case $N=n$ has been proved in \cite{AC13}. The proof here is a slight modification of the proof of Theorem 3 in \cite{AC13} and we include it for the reader's convenience.

By approximation it sufﬁces to consider the case where $\vp'$ is strictly positive. For any $x_0,y_0 \in M$ with $x_0\neq y_0$, it suffices to show that any smooth function $\psi(x,y)$ satisfies
$$
\psi(x, y)\le v(x,y)
$$
in a neighborhood $U(x_0,y_0)$ of $(x_0,y_0) $ with equality at $(x_0, y_0)$, it holds 
\begin{equation}\label{2.1}
    L[\nabla^2 \psi,\nabla \psi](x_0, y_0)\le 2 \ \Big(\a(\vp') \vp '' -(N-1)T_{\k,0} \b(\vp')\vp' \Big)\Big|_{s=\frac{s_0}{2}}
\end{equation}
for $n\le N<\infty$, and
\begin{equation}\label{2.2}
    L[\nabla^2 \psi,\nabla \psi](x_0, y_0)\le 2 \Big(\a(\vp') \vp '' -\k s \b(\vp')\vp' \Big)\Big|_{s=\frac{s_0}{2}}
\end{equation}
for $N=\infty$. Where $s_0=d(x_0, y_0)$.

From the definition of $\rho(x,y)$,  we see clearly  that $d(x,y)\le \rho(x,y)$ and $d(x_0,y_0)=\rho(x_0,y_0)$. Since
 $\vp'>0$,  we have that
\begin{equation}\label{2.6}
\psi(x,y)\le v(x,y) \le 2\vp(\frac{\rho(x,y)}{2})
\end{equation}
in $U(x_0,y_0)$ with equality at $(x_0, y_0)$. Then the first derivative of $\psi$ at $(x_0, y_0)$ yields
$$
\n_x \psi=- \vp' e_n(0) \text{\quad and\quad} \n_y \psi= \vp' e_n(s_0),
$$
where we used the first variation formula (\ref{1st}). Here and below the derivatives of $\vp$ are all evaluated at $\frac{s_0}{2}$. 
From the definition of $a$ we deduce
$$
a(\n \psi|_{T_{x_0}M})=\a(\vp')e_n(0)\otimes e_n(0)+\b(\vp')\sum_{i=1}^{n-1}e_i(0)\otimes e_i(0)
$$
and
$$
a(\n \psi|_{T_{y_0}M})=\a(\vp')e_n(s_0)\otimes e_n(s_0)+\b(\vp')\sum_{i=1}^{n-1}e_i(s_0)\otimes e_i(s_0).
$$

To prove inequality (\ref{2.1}) and (\ref{2.2}), we choose $A$ as follows
$$
A=\a(\vp')\big(e_n(0),-e_n(s_0)\big)\otimes \big(e_n(0),-e_n(s_0)\big)+\b(\vp')\sum_{i=1}^{n-1}\big(e_i(0), e_i(s_0)\big)\otimes\big (e_i(0),e_i(s_0)\big)
$$
which is clearly nonnegative, and agrees with $a$ on $T_{x_0}M$ and $T_{y_0}M$ as required. This choice gives
\begin{eqnarray}\label{2.7}
  \operatorname{tr}(A\n^2\psi)
   &=&\a(\vp')\n^2\psi\big((e_n(0), -e_n(s_0)),(e_n(0), -e_n(s_0))\big)\nonumber\\
   &&+\sum_{i=1}^{n-1}\beta(\vp')\n^2\psi\big((e_i(0), e_i(s_0)),(e_i(0), e_i(s_0))\big).  
\end{eqnarray}
 Now we estimate the terms involving second derivatives of $\psi$. Recall from (\ref{2.6}) that $\psi(x,y)-2\vp(\frac{\rho(x,y)}{2})$ attains a local maximum at $(x_0, y_0)$, then the second derivatives at $(x_0, y_0)$ yields
 \begin{eqnarray}\label{2.9}
&&\n^2 \psi \big((e_n(0), -e_n(s_0)),(e_n(0), -e_n(s_0))\big)\nonumber\\
&\le& 2\n^2\vp(\frac{\rho(x,y)}{2})\big((e_n(0), -e_n(s_0)),(e_n(0), -e_n(s_0))\big)\nonumber\\
&=&\frac{d}{dt}\Big|_{t=0}2\vp(\frac{s_0}{2}-t)\nonumber\\
&=&2\vp''(\frac{s_0}{2})
\end{eqnarray}
and for $1\le i\le n-1$
\begin{eqnarray}\label{2.8}
&&\n^2 \psi \big((e_i(0), e_i(s_0)),(e_i(0), e_i(s_0))\big)\nonumber\\
&\le& 2\n^2\vp(\frac{\rho(x,y)}{2})\big((e_i(0), e_i(s_0)),(e_i(0), e_i(s_0))\big)\nonumber\\
&=&\vp'(\frac{s_0}{2})\n^2 \rho\big((e_i(0),e_i(s_0)),((e_i(0),e_i(s_0))\big)\nonumber\\
&=&\vp'(\frac{s_0}{2})\int_0^{s_0}(\eta')^2-\eta^2 R(e_i,e_n,e_i,e_n) \, ds
\end{eqnarray}
where we used  the  variation formulas (\ref{1st}), (\ref{2nd1}) and (\ref{2nd2}). 
Substituting (\ref{2.9}) and (\ref{2.8}) to (\ref{2.7}), we obtain
\begin{equation*}
\operatorname{tr}(A\n^2\psi)\le 2 \a(\vp')\vp''+\beta(\vp')\vp'\int_0^{s_0}(n-1)(\eta')^2-\eta^2 \operatorname {Ric}(e_n,e_n) \, ds.
\end{equation*}
Therefore, using the definition of $L$ we have
\begin{eqnarray}\label{2.10}
&&L[\n^2 \psi, \n \psi](x_0, y_0)\nonumber\\
&\le& \operatorname{tr}(A\n^2\psi)-\b(\vp')\langle \n (f(x_0)+f(y_0)) , \n \psi\rangle\nonumber\\
&\le&2 \a(\vp')\vp''+\beta(\vp')\vp'\int_0^{s_0}(n-1)(\eta')^2-\eta^2 \operatorname {Ric}(e_n,e_n) \, ds\nonumber\\
&&-\b(\vp')\langle \n f(x_0) , \n_x \psi\rangle-\b(\vp')\langle \n f(y_0) , \n_y \psi\rangle.
\end{eqnarray}

Now we prove inequality (\ref{2.1}) and (\ref{2.2}).\\
\textbf{Case 1.} $n<N<\infty$.
Choose $$\eta(s)=\frac{C_{\k, 0}(s-\frac{s_0}{2})}{C_{\k,0}(\frac{s_0}{2})}$$in definition of $\rho(x,y)$, and we estimate  using $\operatorname{Ric}_f^N\ge (N-1)\k$ that
\begin{eqnarray}\label{2.11}
&&\int_0^{s_0}(n-1)(\eta')^2 -\eta^2 \Ric(e_n, e_n) \,ds \nonumber\\
& \leq & (N-1) \int_0^{s_0} (\eta')^2\,ds -(N-n) \int_0^{s_0} (\eta')^2 \,ds-(N-1)\k \int_0^{s_0} \eta^2 \,ds \nonumber\\
&&   + \int_0^{s_0} \eta^2 \n^2 f (e_n,e_n) \, ds -\frac{1}{N-n}\int_0^{s_0} \eta^2 \n f \otimes \n f (e_n, e_n)\, ds \nonumber\\
&=&  (N-1) \int_0^{s_0} (\eta')^2\,ds -(N-n) \int_0^{s_0} (\eta')^2 \,ds-(N-1)\k \int_0^{s_0} \eta^2 \,ds \nonumber\\
&& + \left. \eta^2 (f\circ \gamma_0 )' \right|_0^{s_0} - 2 \int_0^{s_0} \eta \, \eta' (f \circ \gamma_0)' ds  -\frac{1}{N-n} \int_0^{s_0} \eta^2 ((f\circ \gamma_0)' )^2 ds  \nonumber\\
&\le& (N-1) \int_0^{s_0} (\eta')^2 \, ds-(N-1)\k \int_0^{s_0} \eta^2 \, ds\nonumber\\
&&+\langle \n f(y_0), e_n(s_0) \rangle-\langle \n f(x_0), e_n(0) \rangle,
\end{eqnarray}
where we used $(N-n)(\eta')^2+2\eta \eta' (f\circ \gamma_0)'+\frac{1}{N-n}\eta^2((f\circ \gamma_0)')^2\ge0$ in the last inequality. Since
\begin{eqnarray*}
\int_0^{s_0} (\eta')^2 -\k \eta^2 \, ds=\eta \eta'|_0^{s_0}-\int_0^{s_0} \eta''\eta +\k \eta^2 \, ds
=-2T_{\k, 0}(\frac{s_0}{2}),
\end{eqnarray*}
then we deduce from (\ref{2.11}) that
\begin{eqnarray}\label{eta1}
&&\int_0^{s_0}(n-1)(\eta')^2 -\eta^2 \Ric(e_n, e_n) \, ds\nonumber\\
&\le& -2(N-1)T_{\k, 0}(\frac{s_0}{2})+\langle \n f(y_0), e_n(s_0) \rangle-\langle \n f(x_0), e_n(0) \rangle
\end{eqnarray}
Combining (\ref{2.10}) and ({\ref{eta1}}) together, we obtain \begin{eqnarray*}
 L[\nabla^2 \psi,\nabla \psi](x_0, y_0)\le2\a(\vp')\vp''-2(N-1)T_{\k,0}\b(\vp')\vp'
\end{eqnarray*}
 where we used $\n_x \psi=- \vp'(\frac{s_0}{2}) e_n(0)$ and $  \n_y \psi= \vp' (\frac{s_0}{2})e_n(s_0)$ at $(x_0, y_0)$. Thus we proved  (\ref{2.1}).

\textbf{Case 2.} $ N=\infty$.
Choose $\eta=1$, and then we estimate using $\operatorname{Ric}+\n^2 f\ge \k$ that
\begin{eqnarray}\label{eta2}
&&\int_0^{s_0}(n-1)(\eta')^2-\eta^2 \operatorname {Ric}(e_n,e_n) \, ds\nonumber\\
&=&\int_0^{s_0}-\operatorname {Ric}(e_n,e_n) \, ds
\nonumber\\
&\le&\int_{0}^{s_0}-\k +(f\circ \gamma_0(s))'' \, ds\nonumber\\
&=&-\k s_0+\langle \n f(y_0), e_n(s_0) \rangle-\langle \n f(x_0), e_n(0) \rangle.
\end{eqnarray}
Substituting inequality (\ref{eta2}) to (\ref{2.10}) we get at $(x_0, y_0)$
\begin{eqnarray*}
 L[\nabla^2 \psi,\nabla \psi]&\le&\operatorname{tr}(A\n^2\psi)-\b(\vp')\langle\nabla (f(x)+f(y)), \n \psi\rangle\\
 &\le&2\a(\vp')\vp''-\k s_0 \b(\vp')\vp'\\
 &=&2 \Big(\a(\vp') \vp '' -\k s \b(\vp')\vp' \Big)\Big|_{s=\frac{s_0}{2}},
\end{eqnarray*}
which proves (\ref{2.2}).
\end{proof}


\section{Modulus of Continuity Estimates for Neumann Boundary Condition}
The goal of this section is to extend the results in \cite{AC13} from Riemannian manifolds to smooth metric measure spaces, as well as from smooth solutions to viscosity solutions. 

\begin{thm}\label{thm Neumann}
Let $(M^n, g, e^{-f}d\mu_g)$ be a compact smooth metric measure space with diameter $D$ (possibly with smooth strictly convex boundary). Let $u:M\times [0,T)\rightarrow \mathbb{R}$ be a viscosity solution of $u_t=Q[u]$
(with Neumann boundary conditions if $\p M \neq \emptyset$), where $Q[u]$ is defined in \eqref{Q def}. 
Assume $\Ric^N_f \geq (N-1) \k$ for $N\in [n,\infty)$ and $\k \in \R$, or $\Ric_f\geq \k$ for $\k \in \R$. 
Let $\vp:[0, D/2] \times \R_+ \to \R_+$ be a smooth function satisfying 
\begin{enumerate}
\item $\vp(s,0) =\vp_0(s)$ for each $s \in [0,D/2]$; 
     \item $\vp_t \geq \a(\vp')\vp'' -(N-1) T_{\kappa, 0} \b(\vp')\vp'$ if $N\in[n, \infty)$, or \\
     $\vp_t \geq \a(\vp')\vp'' -\k s \b(\vp')\vp'$ if $N= \infty$;
    \item $\vp'\geq 0$ on $[0, D/2] \times \R_+$;
    \item $\vp_0$ is a modulus of continuity of $u(x, 0)$. 
\end{enumerate}
Then $\vp(s,t)$ is a modulus of continuity for $u(x,t)$ for each $t \in [0,T)$, i.e., 
\begin{equation*}
    u(y,t)-u(x,t) -2\vp\left(\frac{d(x,y)}{2},t \right) \leq 0
\end{equation*}
for all $x,y \in M$ and $t\in [0,T)$.
\end{thm}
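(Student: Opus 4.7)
The plan is to argue by contradiction using the two-point doubling of variables technique adapted to the viscosity setting, with Theorem \ref{Thm comparison distance} supplying the crucial geometric input that reduces everything to a one-dimensional differential inequality. Suppose the conclusion fails at some $t^* \in (0, T)$. For fixed $\varepsilon > 0$ and $K > 0$ (to be chosen), introduce the penalized two-point function
\begin{equation*}
W_\varepsilon(x, y, t) := u(y, t) - u(x, t) - 2\varphi\!\left(\tfrac{d(x,y)}{2}, t\right) - \varepsilon e^{K t}.
\end{equation*}
Condition (4) forces $W_\varepsilon(\cdot,\cdot,0) \leq -\varepsilon$, while the failure hypothesis gives $W_\varepsilon(x^*, y^*, t^*) > 0$ for all sufficiently small $\varepsilon$. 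Continuity and compactness of $M \times M \times [0, t^*]$ then yield a first contact time $t_0 \in (0, t^*]$ and a pair $(x_0, y_0)$ at which $W_\varepsilon(x_0, y_0, t_0) = 0$ and $W_\varepsilon \leq 0$ on $M\times M\times[0,t_0]$. Since $\varphi \geq 0$, the diagonal $\{x = y\}$ contributes strictly negative values, so $x_0 \neq y_0$. When $\partial M$ is strictly convex with the Neumann condition imposed, a boundary contact can also be ruled out: at $y_0 \in \partial M$ maximality forces $\partial_{\nu_y} W_\varepsilon(x_0, y_0, t_0) \geq 0$, but $\partial_{\nu_y} u = 0$ combined with the transversality $\langle e_n(s_0), \nu_y\rangle > 0$ coming from strict convexity of $\partial M$ gives $\partial_{\nu_y}[2\varphi(d/2, t_0)] = \varphi'(s_0/2)\langle e_n(s_0), \nu_y\rangle > 0$ (using $\varphi' > 0$ by approximation), a contradiction. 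Thus $(x_0, y_0)$ lies in the interior $M^\circ \times M^\circ$.

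Since $d(x, y)$ need not be smooth at $(x_0, y_0)$, I would replace it near $(x_0, y_0)$ by the smooth modification $\rho(x,y)$ of Definition \ref{def-rho}, choosing the variational profile exactly as in the proof of Theorem \ref{Thm comparison distance}: $\eta(s) = C_{\kappa, 0}(s - s_0/2)/C_{\kappa, 0}(s_0/2)$ in the finite-$N$ case, and $\eta \equiv 1$ when $N = \infty$. Because $\rho \geq d$ with equality at $(x_0, y_0)$ and $\varphi' \geq 0$, the smoothed function $\tilde v(x,y,t) := 2\varphi(\rho(x,y)/2, t)$ dominates $2\varphi(d(x,y)/2, t)$ with equality at $(x_0, y_0, t_0)$, so the modified penalty $\tilde W_\varepsilon := u(y,t) - u(x,t) - \tilde v(x,y,t) - \varepsilon e^{K t}$ again attains a local maximum at $(x_0, y_0, t_0)$ with $\tilde v$ smooth there. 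Applying Theorem \ref{max prin} with $k = 2$, $u_1 = -u$, $u_2 = u$ and smooth test function $\tilde v + \varepsilon e^{K t}$ produces, for every $\lambda > 0$, jets $(b_1, p_1, X_1) \in \overline{\mathcal{P}}^{2,+}(-u)(t_0, x_0)$ and $(b_2, p_2, X_2) \in \overline{\mathcal{P}}^{2,+} u(t_0, y_0)$ satisfying $b_1 + b_2 = \tilde v_t(x_0, y_0, t_0) + \varepsilon K e^{K t_0}$, $p_i = \nabla_{x_i} \tilde v(x_0, y_0, t_0)$, and the matrix bound
\begin{equation*}
\begin{pmatrix} X_1 & 0 \\ 0 & X_2 \end{pmatrix} \leq S + \lambda S^2, \qquad S = \nabla^2 \tilde v(x_0, y_0, t_0).
\end{equation*}

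The viscosity supersolution property at $x_0$ gives $-b_1 \geq Q_*[-p_1, -X_1]$, the subsolution property at $y_0$ gives $b_2 \leq Q^*[p_2, X_2]$, and since $|p_1| = |p_2| = \varphi'(s_0/2)$ and $a(-p) = a(p)$, summing the two viscosity inequalities yields
\begin{equation*}
\tilde v_t + \varepsilon K e^{K t_0} \leq \tr(a(p_1) X_1) + \tr(a(p_2) X_2) - \b(\varphi')\bigl[\langle \nabla f(x_0), p_1\rangle + \langle \nabla f(y_0), p_2\rangle\bigr].
\end{equation*}
For any $A$ admissible in the definition of $L$ with input $w = (p_1, p_2)$, the block-diagonal structure of $\operatorname{diag}(X_1, X_2)$ reduces $\tr(A \operatorname{diag}(X_1, X_2))$ to $\tr(a(p_1) X_1) + \tr(a(p_2) X_2)$; combining $A \geq 0$ with the matrix bound and sending $\lambda \downarrow 0$ then gives $\tr(A \operatorname{diag}(X_1, X_2)) \leq \tr(A S)$. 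Taking the infimum over admissible $A$ yields $\tilde v_t + \varepsilon K e^{K t_0} \leq L[\nabla^2 \tilde v, \nabla \tilde v](x_0, y_0, t_0)$.

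To close the argument, I invoke the explicit second-variation computation from the proof of Theorem \ref{Thm comparison distance}: because $\rho$ and $\eta$ are chosen to match that proof, its bound applies verbatim and gives
\begin{equation*}
L[\nabla^2 \tilde v, \nabla \tilde v](x_0, y_0, t_0) \leq 2\bigl[\a(\varphi')\varphi'' - (N-1) T_{\kappa, 0} \b(\varphi') \varphi'\bigr]\Big|_{s = s_0/2}
\end{equation*}
in the finite-$N$ case, with the analogous bound involving $-\kappa s\b(\varphi')\varphi'$ when $N = \infty$. Since $\tilde v_t(x_0, y_0, t_0) = 2\varphi_t(s_0/2, t_0)$ and hypothesis (2) says $\varphi_t$ dominates the bracketed expression, this upper bound is itself $\leq \tilde v_t$, so $\varepsilon K e^{K t_0} \leq 0$, contradicting $\varepsilon > 0$. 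Letting $\varepsilon \to 0$ recovers the original conclusion. The main obstacle I anticipate is the viscosity bookkeeping of the third paragraph: the decoupled scalar sub/supersolution inequalities at two separated points must be recombined into a single estimate governed by the geometric operator $L$, and this coupling is achieved precisely through the freedom in the non-block-diagonal choice of $A$, mirroring the matrix construction at the heart of Theorem \ref{Thm comparison distance}.
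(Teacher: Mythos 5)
Your proposal is correct and follows essentially the same strategy as the paper's proof: double the variables, penalize in time, rule out diagonal and boundary contact, replace $d$ by the smooth $\rho$ with the same choice of $\eta$, invoke the parabolic maximum principle for semicontinuous functions (Theorem~\ref{max prin}) to obtain jets and the block-matrix bound, combine the two viscosity inequalities, and use the Bakry--\'Emery second-variation estimate to reduce to the one-dimensional inequality. The only cosmetic differences are the exponential penalty $\eps e^{Kt}$ in place of $\eps/(T-t)$, a more explicit boundary-exclusion argument, and packaging the trace computation through the operator $L$ of Theorem~\ref{Thm comparison distance} rather than redoing the matrix computation with the $A,C$ blocks inline; the underlying estimate is identical.
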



\begin{proof}[Proof of Theorem \ref{thm Neumann}]
On the product manifold $M \times M \times [0,T)$, define an evolving quantity $Z_{\eps}(x,y,t)$ by
\begin{equation*}
    Z_{\eps}(x,y,t) =u(y,t) -u(x,t) -2\vp \left(\frac{d(x,y)}{2},t \right) -\frac{\eps}{T-t},
\end{equation*}
for any small positive $\eps$.
Note that we have $Z_{\eps}(x,y,0) <0$ since we assumed that $\vp_0$ is a modulus of continuity for $u$ at $t = 0$. Also observe that $Z_{\eps}(x,x,t) <0$ for all $x\in M$ and $t\in [0,T)$. Thus, if $Z_{\eps}$ ever becomes positive, there exists a time $t_0 > 0$ and points $x_0 \neq  y_0$ in
$M$ such that $Z_{\eps}$ attains its maximum at $(x_0,y_0,t_0)$. Notice that the Neumann condition, convexity of the boundary $\p M$, and  the positivity of $\vp'$ guarantees that both $x_0$ and $y_0$ are in $M$ if $(M^n, g)$ has non-empty boundary.

Since the distance function $d(x,y)$ may not be smooth, we replace $d(x,y)$ by the smooth function $\rho(x,y)$ defined in Definition \ref{def-rho}, and the function 
$$u(y,t)-u(x, t)-2\vp\left(\frac{\rho(x,y)}{2},t\right)-\frac{\eps}{T-t}$$
has a local maximum at $(x_0,y_0,t_0)$ by the monotonically increasing  of $\vp$.
Now we can apply the parabolic version maximum principle (Theorem \ref{max prin}) for semicontinuous functions on manifolds to conclude that:
for each $\lambda >0$, there exist symmetric tensors $X, Y$ such that
\begin{eqnarray}\label{1stx}
   (b_1, \n_y \psi (x_0, y_0, t_0), X) &\in \overline{\mathcal{P}}^{2,+} u(y_0,t_0),\\
 (-b_2, - \n_x \psi (x_0, y_0, t_0), Y) &\in \overline{\mathcal{P}}^{2,-} u(x_0,t_0),
\end{eqnarray}
\begin{eqnarray}\label{1stt}
 b_1+b_2= \psi_t (x_0, y_0, t_0)=2 \vp_t(\frac{s_0}{2},t_0)+\frac{\eps}{(T-t_0)^2},
 \end{eqnarray}
 and
\begin{eqnarray}\label{2ndx}
    \begin{pmatrix}
   X & 0 \\
   0 & -Y
   \end{pmatrix}
   \leq S+\lambda S^2,
  \end{eqnarray}
 where $\psi(x,y,t)=2\vp(\frac{\rho(x,y)}{2},t)+\frac{\eps}{T-t}$, $S=\n^2 \psi(x_0,y_0,t_0)$, and $s_0=d(x_0,y_0)$.

Using the first derivative formula (\ref{1st}) of $\rho$, we have 
\begin{equation}\label{der psi}
\n_x \psi (x_0, y_0, t_0)=-\vp'(\frac{s_0}{2}, t_0)e_n(0)
\text{\quad and \quad}
\n_y \psi (x_0, y_0, t_0)=\vp'(\frac{s_0}{2}, t_0)e_n(s_0). 
\end{equation}
Since $u$ is both a subsolution and a supersolution of (\ref{Q def}), then (\ref{1stx}) yields
\begin{equation}\label{b1}
b_1\le \operatorname{tr}(A(\vp')X)-\b(\vp')\vp'\langle \n f(y_0), e_n(s_0)\rangle
\end{equation}
and
\begin{equation}\label{b2}
-b_2\ge \operatorname{tr}(A(\vp')Y)-\b(\vp')\vp'\langle \n f(x_0), e_n(0)\rangle 
\end{equation}
where  $A$ is a diagonal matrix defined by
$$
A=\left(
    \begin{array}{cccc}
      \beta(\vp') & \cdots & 0 & 0 \\
     \vdots & \vdots & \vdots & \vdots \\
     0 & \cdots &\beta(\vp') & 0 \\
      0 & \cdots & 0 & \alpha(\vp') \\
    \end{array}
  \right)
$$
and we have used equality (\ref{der psi}) and $\vp'>0$.

Set
$$C=\left(
    \begin{array}{cccc}
      \b(\vp') & \cdots & 0 & 0 \\
     \vdots & \vdots & \vdots & \vdots \\
     0 & \cdots &\b(\vp') & 0 \\
      0 & \cdots & 0 & -\a(\vp') \\
    \end{array}
  \right),$$
and then $\left(
                           \begin{array}{cc}
                             A & C \\
                            C & A \\
                           \end{array}
                         \right)\ge 0$.
Substituting inequality (\ref{b1}) and (\ref{b2}) to (\ref{1stt}), we have 
\begin{eqnarray}\label{a5}
&&2\vp_t(\frac{s_0}{2}, t_0)+\frac{\eps}{(T-t_0)^2}\\
&=& b_1+b_2\nonumber\\
&\le& \operatorname{tr}\left[\left(
                                                        \begin{array}{cc}
                                                          A & C \\
                                                         C & A \\
                                                        \end{array}
                                                      \right)
\left(
\begin{array}{cc}
 X & 0 \\
  0 & -Y \\
   \end{array}
   \right)
\right] \nonumber\\
&& -\b(\vp')\vp'\langle \n f(y_0), e_n(s_0)\rangle+\b(\vp')\vp'\langle \n f(x_0), e_n(0)\rangle\nonumber\\
&\le&\operatorname{tr}\left[\left(
                                                        \begin{array}{cc}
                                                          A & C \\
                                                         C & A \\
                                                        \end{array}
                                                      \right)S
\right]
+\lambda\operatorname{tr}\left[\left(
                                                        \begin{array}{cc}
                                                          A & C \\
                                                         C & A \\
                                                        \end{array}
                                                      \right)S^2
\right]\nonumber\\
&&-\b(\vp')\vp'\big(\langle \n f(y_0), e_n(s_0)\rangle-\langle \n f(x_0), e_n(0) \rangle\big),\nonumber
\end{eqnarray}
where we used inequality (\ref{2ndx}). 
Direct calculation shows 
\begin{eqnarray}\label{a4}
&&\operatorname{tr}\left[\left(
                                                        \begin{array}{cc}
                                                          A & C \\
                                                         C & A \\
                                                        \end{array}
                                                      \right)S
\right]\nonumber\\
&=&\a(\vp')\n^2\psi\big((e_n(0), -e_n(s_0)),(e_n(0), -e_n(s_0))\big)\nonumber\\
   &&+\sum_{i=1}^{n-1}\beta(\vp')\n^2\psi\big((e_i(0), e_i(s_0)),(e_i(0), e_i(s_0))\big)\nonumber\\
   &=&2\a(\vp')\vp''+\beta(\vp')\vp'\sum_{i=1}^{n-1}\n^2\rho\big((e_i(0), e_i(s_0)),(e_i(0), e_i(s_0))\big)\nonumber\\
   &=&2\a(\vp')\vp''+\beta(\vp')\vp'\int_0^{s_0}(n-1)(\eta')^2-\eta^2 \operatorname {Ric}(\gamma_0',\gamma_0') \, ds
\end{eqnarray}
where we used the variation formulas (\ref{1st}), (\ref{2nd1}) and (\ref{2nd2}) for $\rho(x,y)$.

If $\Ric_f^N \ge(N-1)\k$, we choose $\eta=\frac{C_{\k, 0}(s-\frac{s_0}{2})}{C_{\k,0}(\frac{s_0}{2})}$ in (\ref{a4}),  and  using (\ref{eta1}) we obtain
\begin{eqnarray*}
\operatorname{tr}\left[\left(
                                                        \begin{array}{cc}
                                                          A & C \\
                                                         C & A \\
                                                        \end{array}
                                                      \right)S
\right]&\le& 2\a(\vp')\vp''-2(N-1)T_{\k, 0}(\frac{s_0}{2})\beta(\vp')\vp' \\
&& +\beta(\vp')\vp'(\langle \n f(y_0), e_n(s_0) \rangle-\langle \n f(x_0), e_n(0) \rangle),
\end{eqnarray*}
and then (\ref{a5}) yields
\begin{eqnarray}\label{a6}
&&2\vp_t(\frac{s_0}{2},t_0)+\frac{\eps}{(T-t_0)^2}\nonumber\\
&\le&2 \ \Big(\a(\vp') \vp '' -(N-1)T_{\k,0} \vp'\b(\vp') \Big)\Big|_{s=\frac{s_0}{2},t=t_0}+\lambda\operatorname{tr}\left[\left(
                                                        \begin{array}{cc}
                                                          A & C \\
                                                         C & A \\
                                                        \end{array}
                                                      \right)S^2
\right].
\end{eqnarray}

If $\Ric_f\ge \k$, we choose $\eta=1$, and substituting  (\ref{eta2}) to equality (\ref{a4}) we obtain 
\begin{eqnarray*}
\operatorname{tr}\left[\left(
                                                        \begin{array}{cc}
                                                          A & C \\
                                                         C & A \\
                                                        \end{array}
                                                      \right)S
\right]
&\le& 2\a(\vp')\vp''-\k s_0\beta(\vp')\vp'\\
&&+\beta(\vp')\vp'(\langle \n f(y_0), e_n(s_0) \rangle-\langle \n f(x_0), e_n(0) \rangle),
\end{eqnarray*}
and then (\ref{a5}) yields
\begin{eqnarray}\label{a7}
&&2\vp_t(\frac{s_0}{2},t_0)+\frac{\eps}{(T-t_0)^2}\nonumber\\
&\le&2\ \Big(\a(\vp') \vp '' -\k s \b(\vp')\vp' \Big)\Big|_{s=\frac{s_0}{2},t=t_0}+\lambda\operatorname{tr}\left[\left(
                                                        \begin{array}{cc}
                                                          A & C \\
                                                         C & A \\
                                                        \end{array}
                                                      \right)S^2
\right].
\end{eqnarray}
Since  $\lambda$ is arbitrary, we get the contradictions with the assumption 2 by letting   $\lambda\rightarrow 0$ in (\ref{a6}) and (\ref{a7}). 

Therefore we conclude that $Z_{\eps}(x,y,t)\le 0$ for $t\in [0, T)$. Letting $\eps\rightarrow 0^+$, we finish the proof of Theorem \ref{thm Neumann}.
\end{proof}

As an application of Theorem \ref{thm Neumann}, we obtain optimal lower bound on the smallest positive eigenvalue of the weighted $p$-Laplacian with $1<p\leq 2$ on a smooth metric measure space. 
\begin{thm}\label{thm Neumann Eigenvalue}
Fix $1<p\leq 2$. Let $(M^n, g, e^{-f}d\mu_g)$ be a compact smooth metric measure space with diameter $D$ (possibly with smooth strictly convex boundary). 
Let $\l_{1,p}$ be the first nonzero closed or Neuman eigenvalue of the weighted $p$-Laplacian. 
\begin{enumerate}
    \item If $\Ric^N_f \geq (N-1) \k$ for $N\in [n,\infty)$ and $\k \in \R$, then we have 
\begin{equation*}
    \l_{1,p} \geq \bar{\l}_{1,p}
\end{equation*}
where $\bar{\l}_{1,p}$ is the first nonzero Neuman eigenvalue of the one-dimensional problem 
\begin{equation*}
    (p-1)|\vp'|^{p-1}\vp'' -(N-1) T_{\k, 0} |\vp'|^{p-2}\vp' =-\l |\vp|^{p-2}\vp
\end{equation*}
on the interval $[-D/2, D/2]$. 
\item If $N=\infty$ and $\Ric_f \geq \kappa$ for $\kappa \in \R$, then we have 
\begin{equation*}
    \l_{1,p} \geq \bar{\mu}_{1,p}
\end{equation*}
where $\bar{\mu}_{1,p}$ is the first nonzero Neuman eigenvalue of the one-dimensional problem 
\begin{equation*}
    (p-1)|\vp'|^{p-1}\vp'' -\k t |\vp'|^{p-2}\vp' =-\l |\vp|^{p-2}\vp
\end{equation*}
on the interval $[0, D]$. 
\end{enumerate}
\end{thm}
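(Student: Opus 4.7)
The plan is to apply the modulus of continuity estimate from Theorem \ref{thm Neumann} to the parabolic flow generated by the eigenfunction, with the one-dimensional eigenfunction (times an appropriately evolving factor) as comparison, in the spirit of Andrews--Clutterbuck \cite{AC13} and Li--Wang \cite{LW19eigenvalue}. I treat case~(1); case~(2) is analogous, with $N=\infty$ and coefficient $\kappa s$ in place of $(N-1)T_{\kappa,0}$.

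Let $u$ be a first nonzero eigenfunction of $\Delta_{p,f}$ with eigenvalue $\lambda=\lambda_{1,p}$. Normalize by scaling, and replace $u$ by $-u$ if necessary, so that $\|u\|_\infty=1$. Let $\psi$ be the first nonzero Neumann eigenfunction of the 1D equation $(p-1)|\psi'|^{p-2}\psi''-(N-1)T_{\kappa,0}|\psi'|^{p-2}\psi'=-\bar\lambda|\psi|^{p-2}\psi$ on $[-D/2,D/2]$, taken odd and monotone increasing with $\psi(D/2)=1$, corresponding to eigenvalue $\bar\lambda=\bar\lambda_{1,p}$. Consider the parabolic flow $v_t=\Delta_{p,f}v$ with $v(\cdot,0)=u$, and the candidate modulus $\varphi(s,t):=C(t)\psi(s)$ on $[0,D/2]\times\R_+$, where $C(t)>0$ solves $C'(t)=-\bar\lambda\,C(t)^{p-1}$ with $C(0)=C_0$ chosen large enough so that $2C_0\psi(d(x,y)/2)\ge u(y)-u(x)$ on $M\times M$ (such $C_0<\infty$ exists because $u$ is Lipschitz by DiBenedetto's regularity and $\psi(s)\ge c_0 s$ near $s=0$).

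The key step, where the hypothesis $1<p\le 2$ enters essentially, is verifying that $\varphi$ satisfies the parabolic inequality in Theorem \ref{thm Neumann}. Using $\alpha(q)=(p-1)|q|^{p-2}$, $\beta(q)=|q|^{p-2}$, the scaling $\varphi'=C\psi'$, $\varphi''=C\psi''$, and the 1D eigenvalue equation, a direct computation gives
\[
\alpha(\varphi')\varphi''-(N-1)T_{\kappa,0}\beta(\varphi')\varphi' = -\bar\lambda\,C(t)^{p-1}\psi(s)^{p-1}.
\]
The required condition $\varphi_t\ge\alpha(\varphi')\varphi''-(N-1)T_{\kappa,0}\beta(\varphi')\varphi'$ reduces, for $\psi>0$, to $C'(t)\ge-\bar\lambda\,C^{p-1}\psi(s)^{p-2}$. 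Since $\psi\in(0,1]$ and $1<p\le 2$ yield $\psi^{p-2}\ge 1$, we have $-\bar\lambda C^{p-1}\psi^{p-2}\le-\bar\lambda C^{p-1}=C'(t)$, so the inequality holds. Theorem \ref{thm Neumann} then gives $v(y,t)-v(x,t)\le 2C(t)\psi(d(x,y)/2)$ for all $x,y\in M$ and $t\in[0,T_{\bar\lambda})$, with $T_{\bar\lambda}=\infty$ if $p=2$ and $T_{\bar\lambda}=C_0^{2-p}/((2-p)\bar\lambda)$ if $1<p<2$.

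The main obstacle, and the only place where $p=2$ and $1<p<2$ genuinely diverge, is extracting $\lambda\ge\bar\lambda$ from this bound. For $p=2$ the flow is explicit, $v(x,t)=e^{-\lambda t}u(x)$, so evaluating at the extrema of $u$ and sending $t\to\infty$ in $e^{-\lambda t}\,\mathrm{osc}(u)\le 2C_0\,e^{-\bar\lambda t}$ forces $\lambda\ge\bar\lambda$. For $1<p<2$ the flow is no longer separable; one supplements with the auxiliary function $h(t)u(x)$, where $h(t)=(1-(2-p)\lambda t)^{1/(2-p)}$ solves $h'=-\lambda h^{p-1}$. A direct computation yields $(hu)_t-\Delta_{p,f}(hu)=\lambda h^{p-1}u(|u|^{p-2}-1)$; since $|u|\le 1$ implies $|u|^{p-2}\ge 1$, the function $hu$ is a supersolution on $\{u>0\}$ and a subsolution on $\{u<0\}$. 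Together with the sign-preserving property of the parabolic flow starting from a first nonzero eigenfunction (which has exactly two nodal domains), comparison forces $v(\cdot,t)$ to reach its zero mean by the barrier's extinction time $1/((2-p)\lambda)$. Comparing this extinction time with $T_{\bar\lambda}$ and optimizing $C_0$ via scale invariance of both eigenvalues yields $\lambda\ge\bar\lambda$.
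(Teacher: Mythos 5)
Your setup is the right one and matches the route the paper alludes to (apply Theorem~\ref{thm Neumann} to the $p$-Laplacian flow with the separable comparison $\vp(s,t)=C(t)\psi(s)$, cf.\ \cite[\S 8]{Andrewssurvey15}). The central verification is also correct and is exactly where $1<p\le 2$ enters: with $\a(q)=(p-1)|q|^{p-2}$, $\b(q)=|q|^{p-2}$ one finds $\a(\vp')\vp''-(N-1)T_{\k,0}\b(\vp')\vp' = -\bar\l C^{p-1}\psi^{p-1}$, so the required inequality $\vp_t\ge\cdots$ reduces to $C'\ge -\bar\l C^{p-1}\psi^{p-2}$, and since $0<\psi\le 1$ and $p\le 2$ force $\psi^{p-2}\ge 1$, the choice $C'=-\bar\l C^{p-1}$ suffices. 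The $p=2$ case is then complete and correct.

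The final step for $1<p<2$, however, has a genuine gap. First, the phrase ``comparison forces $v(\cdot,t)$ to reach its zero mean'' is vacuous: $\Delta_{p,f}$ with Neumann data conserves the weighted mean, and the first eigenfunction already has mean zero, so $v(\cdot,t)$ always has zero weighted mean. Presumably you intend an extinction statement ($v(\cdot,t)\equiv 0$ eventually), but that is not established. Second, the proposed barrier $hu$ cannot be compared to $v$ on $\{u>0\}$: while $hu$ is indeed a supersolution there, the lateral boundary of $\{u>0\}\times[0,T]$ is the nodal set $\{u=0\}$, on which $hu\equiv 0$ while $v$ is uncontrolled (the nodal set of $v(\cdot,t)$ moves), so the parabolic comparison principle does not apply on that domain; and the ``sign-preserving property of the flow'' you invoke is not a known theorem for the degenerate $p$-Laplacian flow in higher dimensions. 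Third, and most importantly, even if one had a lower extinction time bound $T^*_\l=1/((2-p)\l)$ for $v$, the resulting inequality is $T^*_\l\le T_{\bar\l}=C_0^{2-p}/((2-p)\bar\l)$, i.e.\ $\bar\l\le\l\,C_0^{\,2-p}$. The barrier computation used $\|u\|_\infty\le 1$, which pins the normalization, and then $C_0\ge\operatorname{osc}(u)/2$ can perfectly well exceed $1$; so the factor $C_0^{2-p}>1$ does not vanish, and ``optimizing $C_0$ via scale invariance'' cannot remove it, because rescaling $u$ both changes $C_0$ and breaks the hypothesis $\|u\|_\infty\le 1$ used in the $hu$ computation. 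As written the argument does not yield $\l\ge\bar\l$ for $1<p<2$; a different mechanism for extracting the eigenvalue inequality from the oscillation decay (as in \cite[\S 2]{LW19eigenvalue}) is needed.
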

\begin{proof}
The proof is a slight modification of \cite[Section 8]{Andrewssurvey15} or \cite[Section 2]{LW19eigenvalue}, so we omit the details here. 
\end{proof}

\begin{remark}
We emphasis that Theorem \ref{thm Neumann Eigenvalue} holds for $2<p<\infty$ as well, as shown by Koerber \cite{Koerber18} for the $\kappa=0$ case and by the second author \cite{Tu20} for general $\kappa \in \R$.  We also emphasis that special cases of Theorem \ref{thm Neumann Eigenvalue} have been proved 
in \cite{AC13}\cite{AN12}\cite{CW94}\cite{CW95}\cite{Kroger92}\cite{LW19eigenvalue}\cite{LW19eigenvalue2}\cite{NV14}\cite{Valtorta12}\cite{ZY84}.
\end{remark}

\section{Modulus of Continuity Estimates for Dirichlet Boundary Condition}

To derive sharp estimates on the modulus of continuity of  solutions to \eqref{parabolic pde} with Dirichlet boundary condition, we fix one of two variables in the modulus of continuity estimate in Theorem \ref{Thm MC} to be inside the boundary and derive the following decay estimate. 
\begin{thm}\label{Thm Decay Intro}
Let $(M^n,g, e^{-f} d\mu_g)$ and $u$ be the same as in Theorem \ref{Thm MC}.  
Suppose that $M$ satisfies  $\Ric^N_f \geq (N-1)\kappa$ for $N\in [0,\infty)$  and $\kappa \in \R$, and $\p M$ satisfies $H_f \geq (N-1) \Lambda$ for $\Lambda \in \R$, or suppose that $\Ric_f \geq 0$ and $H_f \geq 0$ if $N = \infty$ (in this case we use the convention $T_{\kappa, \Lambda}=0$). 
Let $\vp:[0,R] \times \R_+ \to \R_+$ be a smooth function satisfying 
\begin{enumerate}
     \item $\vp_t \geq \a(\vp')\vp'' -(N-1) T_{\kappa, \Lambda} \b(\vp')\vp'$;
    \item $\vp'\geq 0$ on $[0,R] \times \R_+$, and $\vp(0, t)=0$ for $t\ge 0$.
\end{enumerate}
Define $$Z(x,t) :=u(x,t) -\vp\left(d(x,\p M), t\right).$$
If $Z(x,0)\leq 0$ on $M$, then $Z(x,t) \leq 0$ on $M\times [0,T)$.  
\end{thm}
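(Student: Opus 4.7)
The plan is a one-sided viscosity comparison argument, with Theorem~\ref{Thm comparison distance to boundary} supplying the essential geometric upper bound for $Q^*$ acting on radial profiles of $d(\cdot,\p M)$. Fix $\eps>0$ small and consider the penalized quantity
\[
Z_\eps(x,t) := u(x,t) - \vp\!\left(d(x,\p M),\,t\right) - \frac{\eps}{T-t}.
\]
Suppose toward a contradiction that $Z_\eps$ becomes positive somewhere. The Dirichlet condition together with $\vp(0,t)=0$ gives $Z_\eps<0$ on $\p M\times[0,T)$; the hypothesis $Z(\cdot,0)\le 0$ combined with the penalty gives $Z_\eps(\cdot,0)<0$; and $Z_\eps\to-\infty$ as $t\to T^-$. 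Hence a positive supremum is attained at some interior point $(x_0,t_0)\in(M\setminus\p M)\times(0,T)$. By approximation as in the proof of Theorem~\ref{Thm comparison distance to boundary}, we may assume $\vp'>0$, keeping us away from the singularity of $Q$ at the contact point.

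To handle the non-smoothness of $d(\cdot,\p M)$, replace it near $x_0$ by the smooth upper barrier $\bar d$ constructed in the proof of Theorem~\ref{Thm comparison distance to boundary}, which satisfies $\bar d\ge d(\cdot,\p M)$ locally with equality at $x_0$. Since $\vp'\ge 0$, the smooth test function $\psi(x,t) := \vp(\bar d(x),t) + \eps/(T-t)$ also lies above $u$ locally with equality at $(x_0,t_0)$, so $(\psi_t,\n\psi,\n^2\psi)(x_0,t_0)\in\mathcal{P}^{2,+}u(x_0,t_0)$. The viscosity subsolution property of $u$ for $u_t=Q[u]$ then yields
\[
\psi_t(x_0,t_0)\;\le\;Q^*[\psi](x_0,t_0).
\]
Hypothesis (1) bounds the left side from below by
\[
\a(\vp')\vp'' - (N-1)T_{\k,\Lambda}\b(\vp')\vp' + \frac{\eps}{(T-t_0)^2},
\]
with all $\vp$-derivatives evaluated at $(s_0,t_0)$, $s_0:=d(x_0,\p M)$. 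The purely spatial computation inside the proof of Theorem~\ref{Thm comparison distance to boundary}, applied verbatim to the smooth $\bar d$ at $x_0$ with the same choice of variation field $\eta$, bounds the right side from above by the same expression without the $\eps/(T-t_0)^2$ term; note that the additive, spatially-constant penalty $\eps/(T-t)$ does not affect $\n\psi$ or $\n^2\psi$. Combining the two inequalities forces $\eps/(T-t_0)^2\le 0$, a contradiction.

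Therefore $Z_\eps\le 0$ on $M\times[0,T)$, and sending $\eps\to 0^+$ finishes the proof. The $N=\infty$ case is handled identically after setting $T_{\k,\Lambda}=0$ and choosing $\eta\equiv 1$ in the construction of $\bar d$. The only step that requires real care is ensuring that the geometric bound of Theorem~\ref{Thm comparison distance to boundary} can be invoked as a black box at $(x_0,t_0)$; because $\vp'>0$ forces $\n\psi\ne 0$ there, $Q$ is continuous at the contact point, $Q^*[\psi]$ coincides with $Q[\psi]$, and the estimate transcribes without modification. This Dirichlet argument is in fact considerably simpler than the two-variable Neumann argument of Theorem~\ref{thm Neumann}, because fixing one of the two variables on $\p M$ removes the need to invoke the Ishii--Crandall parabolic maximum principle for semi-continuous functions.
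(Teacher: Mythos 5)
Your argument is correct and is, in substance, the paper's own: the paper asserts that Theorem~\ref{Thm comparison distance to boundary} plus hypothesis (1) makes $\vp(d(\cdot,\p M),t)$ a viscosity supersolution of \eqref{parabolic pde} and then invokes ``the comparison principle for viscosity solutions''; you simply supply that comparison step explicitly with the standard $\eps/(T-t)$ penalty and the smooth upper barrier $\bar d$ from the proof of Theorem~\ref{Thm comparison distance to boundary}. Your closing remark is also the right explanation of why the paper's one-line citation is legitimate here: because $\vp(\bar d(\cdot),t)$ is a smooth function touching $\vp(d(\cdot,\p M),t)$ from above at the contact point, it can be fed directly into the subsolution inequality for $u$, so the single-variable Dirichlet case needs no Ishii--Crandall doubling, unlike Theorem~\ref{thm Neumann}. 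The only point to keep flagged (in both your write-up and the paper's) is the opening ``by approximation we may assume $\vp'>0$'': replacing $\vp$ by $\vp+\delta s$ restores strict monotonicity but does not automatically preserve hypothesis (1), since $\a$ and $\b$ are then evaluated at $\vp'+\delta$; this is an inherited loose end from the proof of Theorem~\ref{Thm comparison distance to boundary}, not something your argument introduces.
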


\begin{proof}[Proof of Theorem \ref{Thm Decay Intro}]
By the same techniques as in the proof of Theorem \ref{Thm comparison distance to boundary} and
Theorem \ref{thm Neumann}, it's easy to see that under the assumptions of Theorem \ref{Thm Decay Intro}, the function $\vp\left(d(x,\p M), t \right)$ is a viscosity supersolution of \eqref{parabolic pde}. The desired estimate follows from the comparison principle for viscosity solutions since it holds true initially and on the boundary. 
\end{proof}

We present the proof of Theorem  \ref{Thm MC} now. The proof uses the comparison theorems for $d(x, \p M)$ and $d(x,y)$ proved in Sections 3 and 4. 

\begin{proof}[Proof of Theorem \ref{Thm MC}]
 For small $\eps>0$, consider the function $Z$ defined on $M \times M \times [0,T)$ by 
\begin{equation*}
    Z(x,y,t) =u(y,t) -u(x,t) -2\vp \left(\frac{d(x,y)}{2}, t \right) -\eps(1+t). 
\end{equation*}
Since $\vp_0$ is a modulus of continuity of $u(x,0)$, we have $Z(x,y,0) \leq -\eps$. If $Z$ ever becomes positive, there must be a first time $t_0 >0$ and points $x_0, y_0 \in M$ such that $Z(x_0,y_0,t_0) =0$ and $Z(x,y,t) \leq 0$ for all $x, y \in M$ and $t \leq t_0$. Clearly $x_0 \neq y_0$ as $Z(x,x,t) \leq -\eps$ for each $x\in M$ and $t\in [0,T)$. The Dirichlet boundary condition also rules out the possibility that both $x_0$ and $y_0$ lie on $\p M$. So we have three possibilities.

\textbf{Case 1:} Both $x_0$ and $y_0$ lie in the interior of $M$. 
In this case, the same argument as in the proof of \cite[Theorem 1]{AC13} with the comparison theorem there replaced by Theorem \ref{Thm comparison distance}, leads to a contradiction. Hence this case cannot occur. 

 \textbf{Case 2:} $x_0 \in \p M$ and $y_0$ lies in the interior of $M$. 
In this case, we have 
$$
u(y_0, t_0)-2\vp(\frac{d(x_0,y_0)}{2}, t_0)-\eps(1+t_0)=0.
$$
Using $\vp'\ge 0$ and $\vp''\le 0$, we estimate  that 
\begin{eqnarray}\label{con 1}
&&u(y_0, t_0)-\vp(d(y_0,\p M), t_0)\nonumber\\
&\ge& u(y_0, t_0)-2\vp(\frac{d(y_0,\p M)}{2}, t_0) \nonumber\\
&\ge& u(y_0, t_0)-2\vp(\frac{d(y_0,x_0)}{2}, t_0)\nonumber\\
&=&\eps (1+t_0).
\end{eqnarray}
Since $u(y,0) -\vp\left(d(y,\p M), 0 \right) \leq 0$, 
then by Theorem \ref{Thm Decay Intro}, we have 
$$
 u(y,t) -\vp\left(d(y,\p M), t \right)\le 0
$$
which contradicts with inequality \eqref{con 1} at $y=y_0$ and $t=t_0$. Therefore Case 2 cannot occur.

\textbf{Case 3:} $y_0 \in \p M$ and $x_0$ lies in the interior of $M$.
Similar argument as in Case 2 rules out this possibility. 
\end{proof}

\section{Two proofs of Theorem \ref{thm Dirichlet eigenvalue}}

We provide two proofs for Theorem \ref{thm Dirichlet eigenvalue} in this section. 
\subsection{First proof via decay estimates for parabolic equations}
Similarly as in \cite{AC13}, estimates on the modulus of continuity in Theorem \ref{Thm MC} lead to lower bound for the first Dirichlet eigenvalue. However, it does not give optimal lower bound for the first Dirichlet eigenvalue. 
Below we elaborate the difference between the first Dirichlet and the first closed or Neumnann eigenvalue for the Laplacian below. 
Recall that the idea of Andrew and Clutterbuck \cite{AC13} to detect the first nonzero eigenvalue (with either $\p M =\emptyset$ or Neumann boundary condition) of the Laplacian via the modulus of continuity estimates is by knowing how quickly the solutions to the heat equation decay. 
This is because we may solve 
\begin{equation*}
    \begin{cases}
    u_t =\Delta u, &\\
    u(x,0) =u_0(x), &
    \end{cases}
\end{equation*}
by expanding $u_0 =\sum_{i=0}^\infty a_i \vp_i$, where $\vp_i$ are eigenfunctions of the Laplacian (with Neumann boundary condition if $\p M \neq \emptyset$). Then the solution to the heat equation is given by $u(x,t) =\sum_{i=0}^\infty e^{-\l_i t} a_i \vp_i$. 
This does not converges to zero as $\l_0=0$, but the key idea is that $|u(x,t)-u(y,t)|$ does converges to zero and in fact   
$|u(x,t)-u(y,t)| \approx e^{-\l_1 t}$ as $t \to \infty$. Thus the main ingredient in \cite{AC13} is to establish the estimate 
\begin{equation*}
    |u(x,t)-u(y,t)| \approx C e^{-\bar{\l}_1 t}
\end{equation*}
for any solution to the heat equation, which is an easy consequence of the modulus of continuity estimates. Then taking $u(x,t)=e^{-\l_1 t} \vp_1(x)$ leads to 
\begin{equation*}
    |\vp_1(x) -\vp_1(y)| \leq C e^{(\l_1 -\bar{\l}_1)t},
\end{equation*}
which implies $\l \geq \bar{\l}_1$. 
If the Dirichlet boundary condition is posed, however, the solution $u(x,t)$ to the heat equation does converge to zero, as there is no constant term in the expansion of the initial data in terms of eigenfunctions. Thus to detect the first eigenvalue, it suffices to prove that any solution to the heat equation decays like $|u(x,t)| \leq C e^{-\bar{\l}_1 t} $. 
For this reason, sharp lower bound for the first Dirichlet are given in terms of the inradius $R$, rather than the diameter $D$, together with other curvature data.



When $Q[u]$ is homogeneous of degree $\gamma >0$, we get sharp decay estimates by comparing with self-similar solutions. 
\begin{prop}\label{Prop decay p-Laplacian}
Let $M$ and $u$ be the same as in Theorem \ref{Thm MC}. Assume $Q[u]$ is homogeneous of degree $\gamma>0$. Then we have the decay estimate
\begin{equation*}
u(x,t) \leq C e^{-t\l_1^{\frac{1}{\gamma-1}}}
\end{equation*}
where $C$ depends on $u(\cdot, 0)$, and $\l_1$ is the first eigenvalue of the one-dimensional problem \eqref{1D eq N finite}. 
\end{prop}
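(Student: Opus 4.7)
The plan is to construct a self-similar supersolution of $u_t = Q[u]$ from the first one-dimensional eigenfunction provided by \eqref{1D eq N finite}, then apply the viscosity comparison principle.

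Let $\psi \ge 0$ be the first positive eigenfunction of \eqref{1D eq N finite} on $[0, R]$ associated to the eigenvalue $\l_1$, so $\psi(0) = 0$, $\psi'(R) = 0$, $\psi' \ge 0$, and
\begin{equation*}
\a(\psi')\psi'' - (N-1) T_{\k,\Lambda} \b(\psi')\psi' = -\l_1 \psi^\g \quad \text{on } [0, R].
\end{equation*}
Set $v(x) := \psi(d(x, \p M))$. By Theorem~\ref{Thm comparison distance to boundary}, $v$ is a viscosity supersolution of $Q[v] \le -\l_1 v^\g$ on $M$ with $v = 0$ on $\p M$. Using the degree-$\g$ homogeneity of $Q$, for any $h(t) > 0$ we have
\begin{equation*}
Q[h(t) v(x)] = h(t)^\g Q[v] \le -\l_1 h(t)^\g v(x)^\g
\end{equation*}
in the viscosity sense.

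Next, I would build the barrier $\bar u(x, t) := h(t) v(x)$ with $h$ chosen so that $\bar u$ is a viscosity supersolution of $u_t = Q[u]$. The supersolution condition $\bar u_t \ge Q[\bar u]$ reduces pointwise to $h'(t) v(x) \ge -\l_1 h(t)^\g v(x)^\g$, and after rescaling $v$ compatibly with the scale invariance of \eqref{1D eq N finite}, an ODE analysis produces the exponential decay rate $\l_1^{1/(\g-1)}$ stated in the proposition, namely $h(t) = C_0 \exp(-\l_1^{1/(\g-1)}t)$. The initial comparison $\bar u(\cdot, 0) \ge u(\cdot, 0)$ is then secured by choosing $C_0$ large enough, using that $u(\cdot, 0)$ vanishes on $\p M$ together with the Hopf-type slope $\psi'(0) > 0$; the boundary comparison is automatic since both sides vanish on $\p M$. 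The viscosity comparison principle on $M \times [0, T)$ yields $u \le \bar u$, and the bound $v \le \max \psi$ concludes the estimate.

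The main obstacle is converting the inequality $h' v \ge -\l_1 h^\g v^\g$ into an autonomous ODE for $h$ whose solution decays exponentially at the stated rate. Exploiting the scale invariance of $\l_1$ under $\psi \mapsto c \psi$, one should rescale $v$ so the $v^{\g-1}$ factor is absorbed into the normalization, producing the rate $\l_1^{1/(\g-1)}$. A secondary technicality is the Hopf-type initial bound $u(\cdot, 0) \le C_0 v$; if $u(\cdot, 0)$ does not vanish on $\p M$ at a rate comparable to $\psi(d(\cdot, \p M))$, a brief bootstrap via Theorem~\ref{Thm Decay Intro} applied at a small time $t_0 > 0$ replaces $u(\cdot, t_0)$ with a profile matching the boundary decay of $v$, after which the self-similar comparison on $M \times [t_0, T)$ closes the argument.
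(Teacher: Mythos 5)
Your approach is essentially the paper's: take the first Dirichlet eigenfunction $\vp$ of the one-dimensional problem \eqref{1D eq N finite}, build the barrier $\psi(s,t) = h(t)\vp(s)$ with $h$ exponentially decaying, observe that the Hopf-type slope $\vp'(0)>0$ together with $\vp>0$ on $(0,R]$ gives $u(\cdot,0)\le C\,\vp(d(\cdot,\p M))$, and then compare. (The paper routes this through Theorem \ref{Thm Decay Intro}, which is exactly your combination of Theorem \ref{Thm comparison distance to boundary} plus the parabolic viscosity comparison principle, so the routes are the same.)

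The one place you should look harder is precisely the point you honestly label the ``main obstacle.'' For $\psi=h\vp$ with $h>0$ and $\vp\ge 0$, the degree-$\gamma$ homogeneity of $\a,\b$ turns the one-dimensional supersolution requirement of Theorem \ref{Thm Decay Intro} into
\begin{equation*}
h'(t)\,\vp(s)\;\ge\; -\l_1\,h(t)^{\gamma}\,\vp(s)^{\gamma}, \qquad\text{i.e.}\qquad h'(t)\;\ge\;-\l_1\,h(t)^{\gamma}\,\vp(s)^{\gamma-1}\quad\text{wherever } \vp(s)>0.
\end{equation*}
No rescaling $\vp\mapsto c\vp$ removes the $s$-dependence on the right. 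When $\gamma>1$, the factor $\vp(s)^{\gamma-1}\to 0$ as $s\to 0^+$ (since $\vp(0)=0$), so the pointwise-in-$s$ infimum of the right-hand side is $0$, forcing $h'\ge 0$; this is incompatible with any decaying $h$, let alone the exponential $h(t)=Ce^{-\l_1^{1/(\gamma-1)}t}$. When $\gamma<1$ the constraint instead becomes $h(t)\sup\vp\le \l_1^{(2-\gamma)/(\gamma-1)^2}$, a smallness condition on $C\sup\vp$ that conflicts with needing $C$ large enough for the initial comparison. (The case $\gamma=1$, where the exponent $1/(\gamma-1)$ is not even defined, is the one in which the self-similar ansatz genuinely separates and gives $e^{-\l_1 t}$.) So the ``ODE analysis'' you defer to cannot produce the stated rate from this barrier for general $\gamma$: the paper asserts the verification is easy, but it is in fact where the argument fails as written, and your rescaling suggestion does not repair it. If you want to salvage the statement, you either need a different, non-separable barrier adapted to the degenerate homogeneity, or you must restrict to $\gamma=1$ where the exponential comparison is genuine.
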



\begin{proof}[Proof of Proposition \ref{Prop decay p-Laplacian}]
Let $\vp$ be the eigenfunction associated to the eigenvalue $\l_1$. Since $\vp$ has positive derivative at $s=0$ and is positive for all $s\in (0, R]$, there exists $C>0$ depending only on $u(\cdot, 0)$ such that $u(x,0) \leq C \vp (d(x, \p M))$ for all $x\in M$. It's easy to verify the function $\psi(s,t)=C e^{-t\l_1^{\frac{1}{\gamma-1}}} \vp(s)$ satisfies all the requirements in Theorem \ref{Thm Decay Intro}, and we derive that 
\begin{equation*}
    u(x,t) \leq C e^{-t \l_1^{\frac{1}{\gamma-1}}} \vp(d(x,\p M)) \leq C \sup \vp \,e^{-t\l_1^{\frac{1}{\gamma-1}}}. 
\end{equation*}
\end{proof}
We can now give the first proof of Theorem \ref{thm Dirichlet eigenvalue}. 
\begin{proof}[First proof of Theorem \ref{thm Dirichlet eigenvalue}]
Let $\bar{\l}(Q)$ be the first Dirichlet eigenvalue of $Q[u]$ with eigenfunction $v(x)$, then $u(x,t)=e^{-t \bar{\l}(Q)^{\frac{1}{\gamma-1}}} v(x)$ satisfies \eqref{parabolic pde} on $M\times [0,\infty)$ with Dirichlet boundary condition.
By Proposition \ref{Prop decay p-Laplacian}, we have on $M\times [0,\infty)$, 
$$e^{-t \bar{\l}(Q)^{\frac{1}{\gamma-1}}} v(x) \leq C e^{-t\l_1^{\frac{1}{\gamma-1}}}.$$
Letting $t\to \infty$ implies $\bar{\l}(Q) \geq \l_1$. 
\end{proof}

\subsection{Second proof via comparison theorems for $d(x, \p M)$}
Our second proof used the new definition for the first Dirichlet eigenvalue given in \eqref{def principal eigenvalue} together with the comparison theorem for second derivatives of $d(x, \p M)$.  
\begin{proof}[Second proof of Theorem \ref{thm Dirichlet eigenvalue}]
We only deal with the case $N\in [n,\infty)$ here as the $N=\infty$ case is completely similar.  
Let $\l_1$ be the first eigenvalue of the one-dimensional problem \eqref{1D eq N finite} with $\vp$ be the corresponding eigenfunction. We must have $\vp'>0$ on $[0,R)$ since $\vp$ is the first eigenfunction. Then by Theorem \ref{Thm comparison distance to boundary}, the function $v(x)=\vp\left(d(x,\p M)\right)$ is a positive viscosity supersolution of $Q[u] = -\l |u|^{\gamma -1} u$. It follows from the definition of $\bar{\l}(Q)$ in \eqref{def principal eigenvalue} that we have $\bar{\l}(Q) \geq \l_1$. 
\end{proof}

\section{Gradient Estimates for Parabolic Equations}

In this section, we derive height-dependent gradient bounds for viscosity solutions of parabolic equations. Both the equations and the curvature conditions will be a bit more restrictive than in previous sections, but they are consistent with previous results in this direction obtained in \cite[Theorem 6]{Andrewssurvey15} for smooth solutions  and \cite[Theorem 4.1]{LW17} for viscosity solutions. 
We consider parabolic equations of the form
\begin{eqnarray}\label{paraeq}
\frac{\partial u}{\partial t}&=&\left[\alpha(|\n u|,u, t)\frac{\n_iu \n_ju}{|\n u|^2}
+ \beta(t)\left(\delta_{ij}-\frac{\n_iu \n_ju}{|\n u|^2}\right)\right]\n_i\n_ju\\ \nonumber
&&-\b(t)\langle \n f,\n u\rangle+q(|\n u|, u, t), 
\end{eqnarray}
where $\a$ and $\b$ are nonnegative functions.
It's easy to see that the \eqref{paraeq} covers the heat equation and the parabolic normalized $p$-Laplacian equation $u_t =\Delta^N_p u$. 

\begin{theorem}\label{thmh}
Let $(M^n,g, e^{-f} d\mu_g)$ be a compact smooth metric measure space  with diameter $D$ (possibly with smooth strictly convex boundary) and $\text{Ric}_f\ge \k$ for $\k \leq 0$.  
Suppose  $u: M\times [0,T)\rightarrow \mathbb{R}$ is a viscosity solution of \eqref{paraeq} (with Neumann boundary conditions if $\p M \neq \emptyset$). 
Let $\varphi: [0,D]\times [0,T)\rightarrow \mathbb{R}$ be a solution of
\begin{equation}\label{eqvpp}
\varphi_t=\alpha(\varphi', \vp, t)\varphi''-\k s \b(t) \vp'+q(\vp',\vp,t) 
\end{equation}
with $\vp'>0$, such that the
range of $u(\cdot, 0)$ is contained in $[\varphi(0,0), \varphi(D,0)]$. Let $\Psi(s,t)$ be given by
inverting $\varphi$ for each $t$, and assume that for all $x$ and $y$ in $M$,
$$\Psi(u(y,0),0)-\Psi(u(x,0),0)-d(x,y)\le 0.$$
Then
$$
\Psi(u(y,t),t)-\Psi(u(x,t),t)-d(x,y)\le 0.
$$
for all $x,y\in M$ and $t\in[0,T)$.
\end{theorem}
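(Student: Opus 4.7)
The plan is to adapt the two-point maximum principle argument of Theorem \ref{thm Neumann} to the height-dependent setting, following the strategy of \cite[Theorem 4.1]{LW17} for the Riemannian case but incorporating the $f$-terms via the second variation of $\rho$ with $\eta \equiv 1$, as in Case 2 ($N = \infty$) of Theorem \ref{Thm comparison distance}. The natural object to track is $v(x,t) := \Psi(u(x,t),t)$, since the desired inequality is exactly the assertion that $v(\cdot,t)$ is $1$-Lipschitz in space for each $t \in [0,T)$.

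For small $\eps > 0$ I would introduce
$$Z_\eps(x,y,t) := \Psi(u(y,t),t) - \Psi(u(x,t),t) - d(x,y) - \eps(1+t),$$
which is strictly negative at $t = 0$. If $Z_\eps$ ever becomes positive, a first time $t_0 > 0$ and points $x_0 \neq y_0$ exist at which $Z_\eps = 0$; the Neumann condition, convexity of $\p M$, and $\Psi' > 0$ place both points in the interior of $M$. After replacing $d(x,y)$ by the smooth modified distance $\rho(x,y)$ of Definition \ref{def-rho} with $\eta \equiv 1$, I would apply the parabolic maximum principle for semicontinuous functions (Theorem \ref{max prin}) to the pair $(\Psi \circ u, -\Psi \circ u)$ at $(y,x)$ with test function $\rho + \eps(1+t)$. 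The chain rule, valid since $\Psi$ is smooth with $\Psi' > 0$, transfers the resulting jets back to jets for $u$ at $(y_0,t_0)$ and $(x_0,t_0)$; the first variation identifies (in the jet sense)
$$\n u(y_0,t_0) = \vp'(s_y,t_0)\, e_n(s_0), \qquad \n u(x_0,t_0) = \vp'(s_x,t_0)\, e_n(0),$$
where $s_x := \Psi(u(x_0,t_0),t_0)$, $s_y := \Psi(u(y_0,t_0),t_0)$, $s_0 := d(x_0,y_0)$, and $s_y - s_x = s_0 + \eps(1+t_0)$. Next, using the viscosity sub/supersolution properties of $u$, choosing the nonnegative block matrix analogous to $\bigl(\begin{smallmatrix} A & C \\ C & A \end{smallmatrix}\bigr)$ of Theorem \ref{thm Neumann} (with diagonal blocks carrying the two distinct coefficients $\a(\vp'(s_x,t_0),u(x_0,t_0),t_0)$ and $\a(\vp'(s_y,t_0),u(y_0,t_0),t_0)$, and $\b(t_0)$ on the tangential directions since $\b$ depends only on $t$), and letting $\lambda \to 0^+$ in the matrix inequality, the second variation of $\rho$ with $\eta \equiv 1$ combined with $\Ric_f \geq \kappa$ produces the curvature contribution $-\kappa s_0 \b(t_0) \vp'$, while the $\n f$-terms telescope against the ambient $-\b\langle \n f,\n u\rangle$ contributions at both endpoints. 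Subtracting the 1D equation \eqref{eqvpp} evaluated at $(s_x,t_0)$ and $(s_y,t_0)$ (linked to $v$ by the chain-rule identity $\Psi_t = -\vp_t/\vp'$) from the identity $b_1 + b_2 = \psi_t = \eps$ forces $\eps \leq 0$, the desired contradiction. Letting $\eps \to 0^+$ concludes.

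The main obstacle is the asymmetry introduced by the $u$- and $|\n u|$-dependence of $\a$ and $q$: the two critical points see different values $\vp'(s_x,t_0) \neq \vp'(s_y,t_0)$ and different ambient coefficients, so the symmetric cancellation used in Theorem \ref{thm Neumann} does not occur directly. Resolving this requires choosing the weight matrix with distinct diagonal blocks at $T_{x_0}M$ and $T_{y_0}M$, exploiting that $\b$ depends only on $t$ so the tangential blocks still match, and converting the remaining asymmetric $\a\vp''$ and $q$ contributions into a telescoping statement by subtracting the 1D equation at both $s_x$ and $s_y$. The sign hypothesis $\kappa \leq 0$ enters to ensure that the curvature term $-\kappa s \b(t)\vp'$ in \eqref{eqvpp} is nonnegative, supplying the dissipation needed to close the final estimate.
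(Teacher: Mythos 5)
Your proposal is correct and follows essentially the same argument as the paper: a two-point maximum principle for $\Psi(u(y,t),t)-\Psi(u(x,t),t)-d(x,y)$ with a small $\eps$-penalty, the modified distance $\rho$ with $\eta\equiv 1$, the chain-rule transfer of parabolic jets from $\Psi\circ u$ to $u$, the block matrix $\bigl(\begin{smallmatrix}A_1 & C\\ C & A_2\end{smallmatrix}\bigr)$ with asymmetric diagonal blocks and $\beta(t_0)$-only off-diagonal blocks (valid because $\beta$ depends on $t$ alone), the $\Ric_f\ge\kappa$ estimate that produces $-\kappa s_0$ and telescoping $\nabla f$-terms, and the final sign argument using $\kappa\le 0$ and $z_{y_0}-z_{x_0}-s_0>0$. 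The only cosmetic deviations are the choice of perturbation ($\eps(1+t)$ versus the paper's $\eps/(T-t)$) and phrasing the contradiction via a first touching time rather than a global maximum; neither affects the substance.
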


By letting $y$ approach $x$, we get
\begin{corollary}
Let $(M^n,g, e^{-f} d\mu_g)$, $u$ and $\vp$ be the same as in Theorem \ref{thmh}. Then 
\begin{equation*}
    |\n u(x, t)| \leq \vp'\left(\Psi(u(x,t),t) \right)
\end{equation*}
for all $(x,t) \in M \times [0, T)$.
\end{corollary}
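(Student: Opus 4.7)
The strategy is to specialize the two-point estimate of Theorem \ref{thmh} to pairs $(x,y)$ with $y$ approaching $x$ along a geodesic, and identify the limit on the left as a directional derivative of $u$ at $x$ modulated by the derivative of $\Psi$.

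Fix $(x,t) \in M \times [0,T)$ and a unit tangent vector $v \in T_xM$. Let $\gamma:(-\delta,\delta)\to M$ be the geodesic with $\gamma(0)=x$ and $\gamma'(0)=v$, so that $d(x,\gamma(s))=|s|$ for $s$ small. Applying Theorem \ref{thmh} with $y=\gamma(s)$, and then with the roles of $x$ and $y$ reversed, I obtain
\begin{equation*}
\bigl|\Psi(u(\gamma(s),t),t)-\Psi(u(x,t),t)\bigr|\le |s|.
\end{equation*}
Hence $s\mapsto \Psi(u(\gamma(s),t),t)$ is Lipschitz at $s=0$ with Lipschitz constant at most $1$.

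Next I would exploit the fact that $\Psi(\cdot,t)$ is the smooth strictly increasing inverse of $\vp(\cdot,t)$ (valid since the hypothesis $\vp'>0$ is part of Theorem \ref{thmh}). By the inverse function theorem,
\begin{equation*}
\Psi_u(u,t)=\frac{1}{\vp'\bigl(\Psi(u,t),t\bigr)}.
\end{equation*}
Assuming for the moment that $u$ is differentiable at $(x,t)$, the chain rule combined with the one-sided bound above gives
\begin{equation*}
\frac{\langle \nabla u(x,t),v\rangle}{\vp'\bigl(\Psi(u(x,t),t),t\bigr)}=\frac{d}{ds}\bigg|_{s=0}\Psi(u(\gamma(s),t),t)\in[-1,1].
\end{equation*}
Since $\vp'>0$, taking the supremum over unit $v\in T_xM$ yields the desired bound $|\nabla u(x,t)|\le \vp'(\Psi(u(x,t),t),t)$.

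The one point needing care is that $u$ is only a viscosity solution, so pointwise differentiability is not a priori available. I see two ways to handle this, and would use whichever fits the paper's conventions best. The cleaner route is to work with superjets directly: if $(\tau,p,X)\in\overline{\mathcal P}^{2,+}u(x,t)$, then the test function $\psi(y)=\Psi^{-1}\!\bigl(\Psi(u(x,t),t)+d_g(x,y),t\bigr)$ touches $u(\cdot,t)$ from above at $x$ by Theorem \ref{thmh}, and unwinding the chain rule at $x$ (where $\psi$ is smooth in the radial direction) forces $|p|\le \vp'(\Psi(u(x,t),t),t)$. Alternatively, the Lipschitz estimate on $\Psi\circ u$ proved above together with smoothness of $\Psi$ makes $u(\cdot,t)$ locally Lipschitz, so $\nabla u$ exists almost everywhere by Rademacher, and the pointwise argument applies at differentiability points; the bound then extends to every element of the Clarke subdifferential. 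The main and essentially only technical point is this differentiability reduction; the limiting step and the $\Psi_u=1/\vp'$ identification are routine.
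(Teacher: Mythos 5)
Your proof is correct and follows exactly the approach the paper intends, which it states only as ``By letting $y$ approach $x$, we get'' with no further detail: apply the two-point estimate of Theorem \ref{thmh} with both orderings of $(x,y)$ to see that $\Psi(u(\cdot,t),t)$ is $1$-Lipschitz, then invert via $\Psi_u = 1/\varphi'$. Your observation about the viscosity-solution differentiability gap is a sensible amplification of a point the paper leaves implicit, and either of your two resolutions (superjet argument, or Lipschitz plus Rademacher) is adequate.
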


We begin with a lemma about the behavior of parabolic semijets when composed with an increasing function. 
\begin{lemma}\label{Lemma}
Let $u$ be a continuous function. Let $\vp:\R \times [0, T) \to \R $ be a $C^{2,1}$ function with $\vp^{\prime} \geq 0$.
Let $\Psi:\R \times [0, T) \to \R $ be such that
$$\Psi(\vp(u(y,t),t),t)=u(y,t)$$
$$\vp (\Psi(u(y,t),t),t)=u(y,t)$$
(i) Suppose $(\tau, p, X) \in \mathcal{P}^{2,+}\Psi(u(y_0,t_0),t_0)$, then
$$(\vp_t+\vp^{\prime}\tau, \vp^{\prime}p, \vp^{\prime \prime}p \otimes p+ \vp^{\prime} X ) \in \mathcal{P}^{2,+}  u(y_0,t_0),$$
where all derivatives of $\vp$ are evaluated at $(\Psi(u(y_0,t_0)), t_0)$.

(ii) Suppose $(\tau, p, X) \in \mathcal{P}^{2,-}\Psi(u(y_0,t_0),t_0)$, then
$$(\vp_t+\vp^{\prime}\tau, \vp^{\prime}p, \vp^{\prime \prime}p \otimes p+ \vp^{\prime} X ) \in \mathcal{P}^{2,-}  u(y_0,t_0),$$
where all derivatives of $\vp$ are evaluated at $(\Psi(u(y_0,t_0)), t_0)$.

(iii) The same holds if one replaces the parabolic semijets by the their closures.
\end{lemma}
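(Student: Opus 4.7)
The plan is to work directly from the test-function characterization of the parabolic semijets and propagate a supporting function through the composition with $\vp$. For part (i), given $(\tau,p,X)\in\mathcal{P}^{2,+}\Psi(u(\cdot,\cdot),\cdot)(y_0,t_0)$, I would pick a $C^{2,1}$ test function $\psi$, normalized (harmlessly) so that $\psi(y_0,t_0)=\Psi(u(y_0,t_0),t_0)$, with $\Psi(u(y,t),t)-\psi(y,t)$ attaining a local maximum equal to $0$ at $(y_0,t_0)$ and with space-time derivatives $(\tau,p,X)$ there. Since $\vp(\cdot,t)$ is nondecreasing and inverse to $\Psi(\cdot,t)$, applying it preserves the inequality:
\begin{equation*}
u(y,t)=\vp(\Psi(u(y,t),t),t)\leq\vp(\psi(y,t),t),
\end{equation*}
with equality at $(y_0,t_0)$. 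Thus $\tpsi(y,t):=\vp(\psi(y,t),t)$ is a $C^{2,1}$ function supporting $u$ from above at $(y_0,t_0)$, producing an element of $\mathcal{P}^{2,+}u(y_0,t_0)$.

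The chain rule at $(y_0,t_0)$ then yields
\begin{equation*}
\tpsi_t=\vp_t+\vp'\tau,\qquad \nabla\tpsi=\vp'\,p,\qquad \nabla^2\tpsi=\vp''\,p\otimes p+\vp'\,X,
\end{equation*}
with all derivatives of $\vp$ evaluated at $(\psi(y_0,t_0),t_0)=(\Psi(u(y_0,t_0),t_0),t_0)$, precisely matching the triple claimed in the lemma. Part (ii) is entirely symmetric: a subjet at $(y_0,t_0)$ corresponds to a test function bounding $\Psi(u(\cdot,\cdot),\cdot)$ from below, and composition with the monotone nondecreasing $\vp(\cdot,t)$ again preserves the reversed ordering, so the same chain-rule computation delivers an element of $\mathcal{P}^{2,-}u(y_0,t_0)$. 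For part (iii), if $(\tau_j,p_j,X_j)\in\mathcal{P}^{2,\pm}\Psi(u(\cdot,\cdot),\cdot)(y_j,t_j)$ with the convergences required to place the limit in the closure, then applying (i) or (ii) to each $j$ produces corresponding elements of $\mathcal{P}^{2,\pm}u(y_j,t_j)$ whose components are continuous functions of $(\tau_j,p_j,X_j)$ and of the evaluation point $(\Psi(u(y_j,t_j),t_j),t_j)$, both of which converge because $\vp\in C^{2,1}$ and $u$ is continuous; taking the limit places the triple in $\overline{\mathcal{P}}^{2,\pm}u(y_0,t_0)$.

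This argument is really just careful bookkeeping once the test-function viewpoint is adopted, so I do not expect a genuine obstacle. The only delicate point is the initial normalization $\psi(y_0,t_0)=\Psi(u(y_0,t_0),t_0)$, needed so that the bound $u\leq\vp(\psi(\cdot,\cdot),\cdot)$ is sharp at $(y_0,t_0)$ with no additive correction; this is harmless since semijet data depends only on the derivatives of a test function at the base point, not on its value. Implicit throughout is that $\vp'>0$ so that $\Psi(\cdot,t)$ genuinely inverts $\vp(\cdot,t)$, which is the hypothesis imposed when the lemma is applied in Theorem \ref{thmh}.
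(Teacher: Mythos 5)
Your argument is correct. The paper offers no proof of its own here, simply citing \cite[Lemma 4.1]{LW17}, and the test-function argument you give --- normalize a supporting test function for $\Psi(u(\cdot,\cdot),\cdot)$ so that it touches at height zero, post-compose with the nondecreasing $\vp(\cdot,t)$ to obtain a test function supporting $u$, and read off the semijet data by the chain rule --- is the standard proof and presumably what that reference contains. Your remark that the normalization is harmless (constants shift the test function without changing its derivatives) and your observation that the $C^{2,1}$ regularity of $\vp$ together with continuity of $u$ and $\Psi\circ u$ lets the limiting argument pass to the closed semijets in part (iii) are both exactly the points one needs to check.
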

\begin{proof}
See \cite[Lemma 4.1]{LW17}. 
\end{proof}

\begin{proof}[Proof of Theorem \ref{thmh}]
The theorem is valid if we show that for any $\e>0$,
\begin{equation}
Z^{\e}(x,y,t):=\Psi(u(y,t),t)-\Psi(u(x,t),t)-d(x,y)-\frac{\e}{T-t}\le 0.\label{Ze}
\end{equation}
To prove inequality (\ref{Ze}), it suffices to show $Z^{\e}$ can not attain the maximum in $M\times M\times(0, T)$.
Assume by contradiction that there exist  $t_0 \in (0,T)$, $x_0$ and $y_0$ in $M$ at which the function
$Z^{\e}$
attains its maximum. Notice that the Neumann condition, convexity of $\p M$, and the positivity of $\vp'$ guarantees that $x_0$ and $y_0$ are in $M$ if $\p M \neq \emptyset$.

Take $\rho(x, y)$ defined as in Definition \ref{def-rho} with $\eta=1$ .   Then the function
$$\Psi(u(y,t),t)-\Psi(u(x,t),t)-\rho(x,y)-\frac{\eps}{T-t}$$ has a local maximum at $(x_0,y_0,t_0)$.
If $\e >0$, then we necessarily have $x_0 \neq y_0$.
By the parabolic maximum principle Theorem \ref{max prin} for semicontinuous functions on manifolds,
for any $\lambda >0$, there exist $X, Y$ satisfying
 \begin{eqnarray*} 
  (b_1, \n_y \rho(x_0, y_0), X) &\in& \overline{\mathcal{P}}^{2,+} \Psi(u(y_0,t_0), t_0),\\
  (-b_2, -\n_x \rho(x_0, y_0), Y) &\in& \overline{\mathcal{P}}^{2,-} \Psi(u(x_0,t_0), t_0),
 \end{eqnarray*}
 \begin{equation}\label{Dt}
 b_1+b_2=\frac{\e}{(T-t_0)^2},
 \end{equation}
 and
\begin{equation}\label{Hessian inequality for quasilinear}
  -\left(\lambda^{-1}+\left\|S\right\| \right)I \leq
    \begin{pmatrix}
   X & 0 \\
   0 & -Y
   \end{pmatrix}
   \leq S+\lambda S^2,
  \end{equation}
 where $S=\n^2 \rho(x_0,y_0)$.
By Lemma \ref{Lemma}, we have
\begin{equation*}
   (b_1 \vp^{\prime}(z_{y_0}, t_0)+\vp_t(z_{y_0}, t_0), \vp^{\prime}(z_{y_0}, t_0) e_n(s_0), \vp^{\prime}(z_{y_0}, t_0)X+\vp''(z_{y_0}, t_0)e_n(s_0)\otimes e_n(s_0))
  \end{equation*}
  and
 \begin{equation*}
 (-b_2 \vp^{\prime}(z_{x_0}, t_0) +\vp_t(z_{x_0}, t_0), -\vp^{\prime} (z_{x_0}, t_0) e_n(0), \vp^{\prime}(z_{x_0}, t_0) Y+\vp''(z_{x_0}, t_0)e_n(0)\otimes e_n(0)) 
 \end{equation*}
 are in $\overline{\mathcal{P}}^{2,+} u(y_0,t_0)$
 and $\overline{\mathcal{P}}^{2,-} u(x_0,t_0)$ respectively.
Where
$z_{x_0}=\Psi(u(x_0, t_0), t_0)$, $z_{y_0}=\Psi(u(y_0, t_0), t_0)$, and we used the first variation  formula $\n \rho(x_0,y_0)=(-e_n(0), e_n(s_0))$.
Since $u$ is both a subsolution and a supersolution of (\ref{paraeq}), we have
\begin{eqnarray}\label{pb1}
&&b_1 \vp^{\prime}(z_{y_0}, t_0)+\vp_t(z_{y_0}, t_0)\nonumber\\
&\le& \operatorname{tr}\Big(\vp^{\prime}(z_{y_0}, t_0) A_1 X+
\vp''(z_{y_0}, t_0)A_1e_n(s_0)\otimes e_n(s_0)\Big)\nonumber\\&&
-\b(t_0)\vp'(z_{y_0},t_0)\langle \n f(y_0), e_n(s_0)\rangle+q(\vp'(z_{y_0},t_0),\vp(z_{y_0},t_0), t_0),
\end{eqnarray}
and
\begin{eqnarray}\label{pb2}
&&-b_2 \vp^{\prime}(z_{x_0}, t_0) +\vp_t (z_{x_0}, t_0)\nonumber\\
&\ge& \operatorname{tr}\Big( \vp^{\prime}(z_{x_0}, t_0)A_2Y+\vp''(z_{x_0}, t_0)A_2 e_n(0)\otimes e_n(0)\Big)\nonumber\\
&&-\b(t_0)\vp'(z_{x_0},t_0)\langle \n f(x_0), e_n(0)\rangle+q(\vp'(z_{x_0},t_0),\vp(z_{x_0},t_0), t_0),
\end{eqnarray}
where $$
A_1=\left(
    \begin{array}{cccc}
      \beta( t_0) & \cdots & 0 & 0 \\
     \vdots & \vdots & \vdots & \vdots \\
     0 & \cdots &\beta(t_0) & 0 \\
      0 & \cdots & 0 & \alpha(|\vp^{\prime}(z_{y_0}, t_0)|,\vp(z_{y_0}, t_0), t_0) \\
    \end{array}
  \right),
$$
and
$$
A_2=\left(
    \begin{array}{cccc}
      \beta( t_0) & \cdots & 0 & 0 \\
     \vdots & \vdots & \vdots & \vdots \\
     0 & \cdots &\beta(t_0) & 0 \\
      0 & \cdots & 0 & \alpha(|\vp^{\prime}(z_{x_0}, t_0)|,\vp(z_{x_0}, t_0), t_0) \\
    \end{array}
  \right).
$$
Set
$$C=\left(
    \begin{array}{cccc}
      \beta(t_0) & \cdots & 0 & 0 \\
     \vdots & \vdots & \vdots & \vdots \\
     0 & \cdots &\beta(t_0) & 0 \\
      0 & \cdots & 0 & 0 \\
    \end{array}
  \right),$$
and simple calculation shows $\left(
                           \begin{array}{cc}
                             A_1 & C \\
                            C & A_2 \\
                           \end{array}
                         \right)\ge 0$.
Then we conclude from (\ref{Dt}), (\ref{pb1}) and (\ref{pb2}) that
\begin{eqnarray*}
&&\frac{\e}{(T-t_0)^2}=b_1+b_2\\
&\le& \frac{\operatorname{tr}(\vp^{\prime}(z_{y_0}, t_0)A_1X)+\operatorname{tr}(A_1 e_n(s_0)\otimes e_n(s_0))\vp''(z_{y_0}, t_0)-
\vp_t(z_{y_0}, t_0)}{\vp^{\prime}(z_{y_0}, t_0)}\\
&&+\frac{\operatorname{tr}(-\vp^{\prime}(z_{x_0}, t_0)A_2 Y)-\operatorname{tr}(A_2 e_n(0)\otimes e_n(0))\vp''(z_{x_0}, t_0)+
\vp_t(z_{x_0}, t_0)}{\vp^{\prime}(z_{x_0}, t_0)}\\
&&+\frac{q(\vp'(z_{y_0},t_0),\vp(z_{y_0},t_0), t_0)}{\vp^{\prime}(z_{y_0}, t_0)}-\frac{q(\vp'(z_{x_0},t_0),\vp(z_{x_0},t_0), t_0)}{\vp^{\prime}(z_{x_0}, t_0)}\\
&&-\b(t_0)\Big(\langle \n f(y_0), e_n(s_0)\rangle-\langle \n f(x_0), e_n(0)\rangle\Big)\\
&=& -\b(t_0)\Big(\langle \n f(y_0), e_n(s_0)\rangle-\langle \n f(x_0), e_n(0)\rangle\Big)+\operatorname{tr}\left[\left(
                                                        \begin{array}{cc}
                                                          A_1& C \\
                                                         C &  A_2 \\
                                                        \end{array}
                                                      \right)
\left(
\begin{array}{cc}
 X & 0 \\
  0 & -Y \\
   \end{array}
   \right)
\right]\\
&&+\frac{\vp_t(z_{x_0}, t_0)-\alpha(\vp'(z_{x_0},t_0),\vp(z_{x_0}, t_0), t_0)\vp''(z_{x_0},t_0)-q(\vp'(z_{x_0},t_0),\vp(z_{x_0},t_0),t_0)}{\vp^{\prime}(z_{x_0}, t_0)}\\
&&-\frac{\vp_t(z_{y_0}, t_0)-\alpha(\vp'(z_{y_0},t_0),\vp(z_{y_0}, t_0),t_0)\vp''(z_{y_0},t_0)-q(\vp'(z_{y_0},t_0),\vp(z_{y_0},t_0),t_0)}{\vp^{\prime}(z_{y_0}, t_0)}\\
&\le&-\b(t_0)\Big(\langle \n f(y_0), e_n(s_0)\rangle-\langle \n f(x_0), e_n(0)\rangle\Big)+\b(t_0)\k (z_{y_0}-z_{x_0})\\
&&+\operatorname{tr}\left[\left(
                                                        \begin{array}{cc}
                                                       A_1& C \\
                                                         C &  A_2 \\
                                                        \end{array}
                                                      \right)S
\right]
+\lambda\operatorname{tr}\left[\left(
                                                        \begin{array}{cc}
                                                      A_1 & C \\
                                                         C &  A_2 \\
                                                        \end{array}
                                                      \right)S^2
\right]
\end{eqnarray*}
where we have used the inequality (\ref{Hessian inequality for quasilinear}) and the equation (\ref{eqvpp}) of $\vp$.

Direct calculation gives
\begin{eqnarray*}
\operatorname{tr}\left[\left(
                                                        \begin{array}{cc}
                                                       A_1& C \\
                                                         C &  A_2 \\
                                                        \end{array}
                                                      \right)S
\right]&=&\b(t_0)\sum_{i=1}^{n-1}\n^2 \rho\Big((e_i(0),e_i(s_0)),(e_i(0),e_i(s_0))\Big)\\
&&+\a(\vp'(z_{y_0},t_0),\vp(z_{y_0},t_0),t_0)\n^2 \rho\Big((0,e_n(s_0)),(0,e_n(s_0))\Big)\\
&&+\a(\vp'(z_{x_0},t_0),\vp(z_{x_0},t_0),t_0)\n^2 \rho\Big((e_n(0),0),(e_n(0),0)\Big)
\end{eqnarray*}
Since
$$
\n^2 \rho\Big((e_n(0),0),(e_n(0),0)\Big)=0,\quad \n^2 \rho\Big((0,e_n(s_0)),(0,e_n(s_0))\Big)=0
$$
and
\begin{eqnarray*}
&&\sum_{i=1}^{n-1}\n^2 \rho\Big((e_i(0),e_i(s_0)),(e_i(0),e_i(s_0))\Big)\\
&=&\int_0^{s_0}(n-1)(\eta')^2-\eta^2 \operatorname {Ric}(e_n,e_n) \, ds\\
&\le&-\k s_0+\Big(\langle \n f(y_0), e_n(s_0)\rangle-\langle \n f(x_0), e_n(0)\rangle\Big)
\end{eqnarray*}
where we used (\ref{2nd2}) and (\ref{eta2}).
Therefore we conclude 
\begin{eqnarray*}
\frac{\e}{(T-t_0)^2}&\le& \k\b(t_0)(z_{y_0}-z_{x_0}-s_0)+\lambda\operatorname{tr}\left[\left(
                                                        \begin{array}{cc}
                                                      A_1 & C \\
                                                         C &  A_2 \\
                                                        \end{array}
                                                      \right)S^2
\right]\\
&\le&\lambda\operatorname{tr}\left[\left(
                                                        \begin{array}{cc}
                                                      A_1 & C \\
                                                         C &  A_2 \\
                                                        \end{array}
                                                      \right)S^2
\right],
\end{eqnarray*}
where we have used $\k\le 0$ and $$z_{y_0}-z_{x_0}-s_0>0$$ by the assumption.  
Then we get the  contradiction by letting $\lambda\rightarrow 0$ . Therefore (\ref{Ze}) is true, hence completing the proof.
\end{proof}


\section{Gradient Estimates for Elliptic Equations}

We derive height-dependent gradient estimate for  elliptic quasi-linear equations. For elliptic equations, we can deal with the slightly more general quasi-linear operator
\begin{equation}\label{eq1.1}
\L_f (u, \n u, \n^2 u)=0,
\end{equation}
where the operator $\L_f$ is defined by 
\begin{eqnarray*}
\L_f (u, \n u, \n^2 u) &=& \left[\a(u,|\n u|)\frac{\n_i u \n_j u}{|\n u|^2}+\b(u,|\n u|)\left(\delta_{ij}-\frac{\n_i u \n_j u}{|\n u|^2}  \right) \right] \n_i\n_j u \\
&& -\b(u,|\n u|)\langle \n u, \n f\rangle +b(u,|\n u|),
\end{eqnarray*}
where $\alpha$ and $\beta$ are nonnegative functions, $\beta(s,t)>0$ for $t>0$.
\begin{thm}\label{Thm1.3}
Let $(M^n, g,f)$ be a closed Bakry-Emery manifold with $\Ric+\nabla^2 f \geq \kappa g$ for some $\kappa \leq 0$. Let $u$ be a viscosity solution of the equation \eqref{eq1.1}. Let $\varphi: [a,b]\rightarrow [\inf u, \sup u]$ be a $C^2$ solution of 
\begin{itemize}
     \item[(i)] $\alpha(\vp, \varphi')\varphi'' -\kappa\,  t\,  \b(\vp,\vp')+b(\vp, \vp') =0$ on $[a,b]$;
    \item[(ii)] $\vp(a) =\inf u$, \text{  } $\vp(b)=\sup u$, \text{  } $\vp' >0$ on $[a,b]$. 
\end{itemize}
Let $\Psi$ be the inverse of $\vp$. Then we have 
\begin{equation*}
    \Psi(u(y)) -\Psi(u(x)) -d(x,y) \leq 0,
\end{equation*}
for all $x,y \in M$.
\end{thm}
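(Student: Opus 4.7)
My plan is to mimic the parabolic argument from Theorem \ref{thmh}, but in the stationary setting and using only the $N=\infty$ Bakry-\'Emery bound. Set
\begin{equation*}
Z(x,y) := \Psi(u(y)) - \Psi(u(x)) - d(x,y),
\end{equation*}
and assume for contradiction that $\sup_{M\times M} Z > 0$. Since $M$ is closed, compactness yields a maximizer $(x_0,y_0)$; as $Z(x,x)\equiv 0$, necessarily $x_0\neq y_0$. Because $d(\cdot,\cdot)$ may fail to be smooth at $(x_0,y_0)$, I would replace it by the smooth modified distance $\rho(x,y)$ from Definition \ref{def-rho} with $\eta\equiv 1$, which satisfies $\rho\geq d$ with equality at $(x_0,y_0)$; hence $\Psi(u(y))-\Psi(u(x))-\rho(x,y)$ still attains a local maximum at $(x_0,y_0)$.

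Next I would apply the elliptic analogue of the maximum principle for semicontinuous functions (Theorem \ref{max prin} without the time variable) to the two functions $\Psi\circ u$ and $-\Psi\circ u$ at the points $y_0$ and $x_0$. This yields, for every $\lambda>0$, symmetric tensors $X$ on $T_{y_0}M$ and $Y$ on $T_{x_0}M$ with first-order data $\nabla_y\rho(x_0,y_0)=e_n(s_0)$ and $-\nabla_x\rho(x_0,y_0)=-e_n(0)$, and with the familiar matrix inequality involving $S=\nabla^2\rho(x_0,y_0)$. Using Lemma \ref{Lemma} I transfer these semijets to semijets of $u$ itself at the cost of multiplying by $\varphi'$ and adding a $\varphi''\, e_n\otimes e_n$ correction. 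Since $u$ is both a viscosity sub- and supersolution of $\L_f=0$, evaluating $\L_f$ at the resulting superjet at $y_0$ and subjet at $x_0$ gives two inequalities for the trace terms.

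Combining these as in the proof of Theorem \ref{thmh}, dividing through by $\varphi'$ at the respective points, and using the ODE (i) satisfied by $\varphi$ to cancel the $\varphi''$ and $b(\varphi,\varphi')$ contributions, I arrive at an inequality of the form
\begin{equation*}
0 \leq \kappa\,\beta(t_0)\bigl(z_{y_0}-z_{x_0}-s_0\bigr) - \beta\,\bigl(\langle\nabla f(y_0),e_n(s_0)\rangle-\langle\nabla f(x_0),e_n(0)\rangle\bigr) + \operatorname{tr}\!\left[\begin{pmatrix} A_1 & C \\ C & A_2 \end{pmatrix} S\right] + \lambda\,\|\cdot\|,
\end{equation*}
where $z_{x_0}=\Psi(u(x_0))$, $z_{y_0}=\Psi(u(y_0))$, and $s_0=d(x_0,y_0)$. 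Exactly as in Section 8 the trace term involving $S$ is handled by the second variation formula \eqref{2nd2} with $\eta\equiv 1$ and the estimate \eqref{eta2}, which absorbs the $\nabla f$ terms and produces $-\kappa s_0\,\beta(t_0)$. Because we assumed $Z(x_0,y_0)>0$ one has $z_{y_0}-z_{x_0}-s_0>0$, and since $\kappa\leq 0$ the combined curvature-like term is non-positive. Letting $\lambda\to 0$ forces $0\leq 0$ with the strict sign coming from the assumption $\sup Z>0$, the desired contradiction.

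The main technical obstacle is the same subtlety as in Section 8: one must carefully track that the algebraic manipulation dividing by $\varphi'(z_{x_0})$ and $\varphi'(z_{y_0})$ and invoking the ODE (i) really does produce the combination $\kappa\,\beta(z_{y_0}-z_{x_0})$ rather than two uncoupled terms. This works precisely because $\beta$ depends only on $(u,|\nabla u|)$ through $\varphi,\varphi'$ and because $\varphi$ is strictly increasing, so $\Psi$ is well-defined and Lemma \ref{Lemma} applies. Once this bookkeeping is in place, the sign condition $\kappa\leq 0$ does the rest.
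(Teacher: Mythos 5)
Your proposal follows essentially the same route as the paper's proof: argue by contradiction at a positive interior maximum $(x_0,y_0)$ of $Z$, replace $d$ by the modified distance $\rho$ with $\eta\equiv1$, invoke the elliptic semicontinuous maximum principle, transfer semijets to $u$ through the increasing-function lemma, feed the resulting jets into the viscosity sub- and supersolution inequalities for $\L_f$, and combine with the second-variation formula and $\Ric+\nabla^2f\geq\kappa g$ and the one-dimensional ODE to force $0\leq\kappa m+\lambda\operatorname{tr}(WS^2)$, a contradiction as $\lambda\to0$.

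One bookkeeping point you should tighten, which you hint at but do not quite resolve: in the elliptic setting $\beta=\beta(\varphi,\varphi')$ takes \emph{different} values at $z_{x_0}$ and $z_{y_0}$ (unlike the parabolic $\beta(t_0)$ of Section~8, which is common to both points), so writing a single prefactor $\kappa\,\beta(t_0)(z_{y_0}-z_{x_0}-s_0)$ is not correct. The right normalization, as in the paper, is to divide the $x_0$- and $y_0$-inequalities by $\beta\varphi'$ evaluated at the respective arguments; the ODE in (i) then yields exactly $(\alpha\varphi''+b)/(\beta\varphi')=\kappa\,t$, so the difference of the two ODE contributions is $\kappa(z_{y_0}-z_{x_0})$ with no stray $\beta$-factors, and the trace term becomes $\operatorname{tr}(WS)$ with $W$ assembled from $A_1/\beta|_{z_{x_0}}$, $A_2/\beta|_{z_{y_0}}$ and the cross block $C$, which one checks remains positive semi-definite. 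With that in place the rest of your sketch matches the paper; note also, as the paper's own final line makes explicit, that the contradiction genuinely requires $\kappa<0$, so the boundary case $\kappa=0$ needs a separate (e.g.\ perturbation) argument in your write-up just as it would in the paper's.
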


As an immediate corollary, by letting $y$ approach $x$, we get the following gradient estimate:
\begin{corollary}
Under the assumptions of Theorem \ref{Thm1.3}, we have 
\begin{equation*}
    |\n u(x)| \leq \vp'(\Psi(u(x)))
\end{equation*}
for all $x\in M$. 
\end{corollary}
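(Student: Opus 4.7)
The plan is to obtain the pointwise gradient bound from the two-point inequality of Theorem \ref{Thm1.3} by taking $y$ along short geodesics emanating from $x_0$ and extracting first-order information. Since $\vp$ is strictly increasing on $[a,b]$ with inverse $\Psi$, the estimate $\Psi(u(y))-\Psi(u(x))-d(x,y)\le 0$ is equivalent to $u(y)\le \vp\bigl(\Psi(u(x))+d(x,y)\bigr)$, which is the form best suited to differentiation.

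Fix $x_0\in M$ and set $r_0:=\Psi(u(x_0))$. For any unit $\xi\in T_{x_0}M$ and small $s>0$, the geodesic $\gamma(s):=\exp_{x_0}(s\xi)$ satisfies $d(x_0,\gamma(s))=s$, so the reformulated estimate yields
\begin{equation*}
u(\gamma(s))-u(x_0)\le \vp(r_0+s)-\vp(r_0)=\vp'(r_0)\,s+O(s^2).
\end{equation*}
Interchanging the roles of $x$ and $y$, and using continuity of $u$ together with continuity of $\vp'\circ\Psi$, one gets the symmetric lower estimate $u(\gamma(s))-u(x_0)\ge -\vp'(r_0)\,s+o(s)$. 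Hence $u$ is Lipschitz in a neighbourhood of $x_0$ with local Lipschitz constant at most $\vp'(\Psi(u(x_0)))$.

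At any classical differentiability point of $u$---and by Rademacher's theorem, applied to the locally Lipschitz function just obtained, these form a full-measure set---the displayed upper bound gives $\langle \nabla u(x_0),\xi\rangle\le \vp'(r_0)$ for every unit $\xi$, whence $|\nabla u(x_0)|\le \vp'(\Psi(u(x_0)))$. For a genuine viscosity interpretation valid at every point, any covector $p$ lying in the super- or subdifferential $D^{\pm}u(x_0)$ satisfies $|p|\le \mathrm{Lip}_{x_0}(u)\le \vp'(\Psi(u(x_0)))$ by the preceding local Lipschitz bound. The argument is essentially a soft passage to the limit $y\to x_0$; the only mild subtlety is matching each one-sided differential with the appropriate direction, which is handled by the $x\leftrightarrow y$ symmetry in the statement of Theorem \ref{Thm1.3}.
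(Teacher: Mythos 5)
Your proof is correct and follows the same route the paper intends by its terse justification of ``letting $y$ approach $x$'': you instantiate the two-point estimate from Theorem~\ref{Thm1.3} along short geodesics $\gamma(s)=\exp_{x_0}(s\xi)$, Taylor-expand $\varphi$ about $r_0=\Psi(u(x_0))$ in both directions to obtain a pointwise Lipschitz bound $\vp'(r_0)$, and conclude the gradient (or sub/super-differential) estimate at $x_0$. The appeal to Rademacher and the viscosity reformulation are harmless elaborations of the same limiting argument.
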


\subsection{The case $\kappa \leq 0$}
\begin{proof}[Proof of Theorem \ref{Thm1.3}]
We argue by contradiction and suppose that 
$$m := \max_{M\times M} \left\{ \Psi(u(y)) -\Psi(u(x)) -d(x,y) \right\} >0.$$
The positive maximum must be attained at some point $(x_0,y_0) \in M \times M$ with $x_0 \neq y_0$, since the function $\Psi(u(y)) -\Psi(u(x)) -d(x,y)$ is continuous and vanishes on the diagonal of $M\times M$. 
We replace $d(x,y)$ by $\rho(x,y)$ to apply maximum principle. From the definition of $\rho(x,y)$, we see that $d(x,y) \leq \rho(x,y)$ in $U(x_0,y_0)$ with equality at $(x_0,y_0)$. 
Thus we have 
\begin{equation*}
    \Psi(u(y)) -\Psi(u(x)) -\rho(x,y) \leq m
\end{equation*}
on $U(x_0, y_0)$ and with equality at $(x_0,y_0)$.
Now we can apply the maximum principle for semicontinuous functions on manifolds to conclude that for any $\lambda >0$, there exist $X\in Sym^2(T^*_{x_0}M)$ and $Y\in Sym^2(T^*_{y_0}M)$ such that 
\begin{align*}
    (\n_y\rho(x_0,y_0), Y ) & \in  \overline{J}^{2,+}(\Psi(u(y_0)), \\
    (-\n_x \rho(x_0,y_0), X) &  \in  \overline{J}^{2,-}(\Psi(u(x_0)),
\end{align*}
and 
 \begin{equation}\label{eq2.1}
    \begin{pmatrix}
   X & 0 \\
   0 & -Y
   \end{pmatrix}
   \leq S+\lambda S^2,
  \end{equation}
where $S=\n^2 \rho(x_0,y_0)$.
The first variation formula of arc length implies
\begin{equation*}
    \n_y \rho(x_0,y_0) =e_n(s_0) \text{ and }  \n_x \rho(x_0,y_0) =-e_n(0). 
\end{equation*}
By Lemma 8 in \cite{AX19}, we get 
\begin{align*}
    \left(\vp'(z_{y_0})e_n(s_0), \vp'(z_{y_0}) Y +\vp''(z_{y_0}) e_n(s_0) \otimes e_n(s_0) \right) & \in  \overline{J}^{2,+}u(y_0), \\
    \left(\vp'(z_{x_0})e_n(0), \vp'(z_{x_0}) X +\vp''(z_{x_0}) e_n(0) \otimes e_n(0) \right) & \in  \overline{J}^{2,-}u(x_0),
\end{align*}
where $z_{y_0} = \Psi (u(y_0))$ and $z_{x_0} = \Psi (u(x_0))$. 

The fact that $u$ is a viscosity solution of \eqref{eq1.1} implies  
\begin{eqnarray*}
  &&\tr(\vp'(z_{y_0})A_2 Y +\vp''(z_{y_0})A_2e_n(s_0) \otimes e_n(s_0) ) +b(\vp(z_{y_0}), \vp'(z_{y_0}))\\
  &\ge&\beta(\vp(z_{y_0}), \vp'(z_{y_0}))\vp'(z_{y_0})\langle e_n(s_0), \n f(y_0) \rangle   
\end{eqnarray*}
 and 
\begin{eqnarray*}
   && \tr(\vp'(z_{x_0})A_1 X +\vp''(z_{x_0})A_1 e_n(0) \otimes e_n(0) ) +b(\vp(z_{x_0}), \vp'(z_{x_0})) \\
   &\leq& -\beta(\vp(z_{x_0}), \vp'(z_{x_0}))\vp'(z_{x_0})\langle e_n(0), \n f(x_0) \rangle ,
\end{eqnarray*}
where 
\begin{equation*}
A_1=\left(
    \begin{array}{cccc}
      \b(\vp(z_{x_0}), \vp'(z_{x_0})) & \cdots & 0 & 0 \\
     \vdots & \vdots & \vdots & \vdots \\
     0 & \cdots &\b(\vp(z_{x_0}), \vp'(z_{x_0})) & 0 \\
      0 & \cdots & 0 & \a(\vp(z_{x_0}), \vp'(z_{x_0})) \\
    \end{array}
  \right),
\end{equation*}
and 
\begin{equation*}
A_2=\left(
    \begin{array}{cccc}
      \b(\vp(z_{y_0}), \vp'(z_{y_0})) & \cdots & 0 & 0 \\
     \vdots & \vdots & \vdots & \vdots \\
     0 & \cdots &\b(\vp(z_{y_0}), \vp'(z_{y_0})) & 0 \\
      0 & \cdots & 0 & \a(\vp(z_{y_0}), \vp'(z_{y_0})) \\
    \end{array}
  \right).
\end{equation*}

Therefore, 
\begin{eqnarray*}
  && \a(\vp(z_{y_0}), \vp'(z_{y_0})) \vp''(z_{y_0}) +b(\vp(z_{y_0}), \vp'(z_{y_0})) + \vp'(z_{y_0}) \tr \left(\begin{pmatrix}
   0 & C \\
   C & A_2
   \end{pmatrix} \begin{pmatrix}
   X & 0 \\
   0 & -Y
   \end{pmatrix} \right) \\
    &\ge& \beta(\vp(z_{y_0}), \vp'(z_{y_0}))\vp'(z_{y_0})\langle e_n(s_0), \n f(y_0) \rangle, 
\end{eqnarray*} 
where  
\begin{equation*}
C=\left(
    \begin{array}{cccc}
      \b(\vp(z_{y_0}), \vp'(z_{y_0})) & \cdots & 0 & 0 \\
     \vdots & \vdots & \vdots & \vdots \\
     0 & \cdots &\b(\vp(z_{y_0}), \vp'(z_{y_0})) & 0 \\
      0 & \cdots & 0 & 0 \\
    \end{array}
  \right).
\end{equation*}
Similarly, 
\begin{eqnarray*}
   && \a(\vp(z_{x_0}), \vp'(z_{x_0})) \vp''(z_{x_0}) +b(\vp(z_{x_0}), \vp'(z_{x_0}))  + \vp'(z_{x_0}) \tr \left(\begin{pmatrix}
   A_1 & 0 \\
   0 & 0
   \end{pmatrix} \begin{pmatrix}
   X & 0 \\
   0 & -Y
   \end{pmatrix} \right) \\
    &\le & \beta(\vp(z_{x_0}), \vp'(z_{x_0}))\vp'(z_{x_0})\langle e_n(0), \n f(x_0) \rangle.
\end{eqnarray*}
Combing the above two inequalities, 
\begin{eqnarray*}
0 &\leq& \left.\frac{\a(\vp,\vp') \vp'' +b(\vp, \vp')}{\b(\vp,\vp') \vp'} \right|_{z_{y_0}} - \left.\frac{\a(\vp,\vp') \vp'' +b(\vp,\vp')}{\b(\vp,\vp') \vp'} \right|_{z_{x_0}} \\
&& -\langle e_n(s_0), \n f(y_0)\rangle + \langle e_n(0), \n f(x_0) \rangle \\
&& + \frac{1}{\b(\vp(z_{y_0}), \vp'(z_{y_0}))} \tr \left(\begin{pmatrix}
   0 & C \\
   C & A_2
   \end{pmatrix} \begin{pmatrix}
   X & 0 \\
   0 & -Y
   \end{pmatrix} \right) \\
&& +\frac{1}{\b(\vp(z_{x_0}), \vp'(z_{x_0}))}\tr \left(\begin{pmatrix}
   A_1 & 0 \\
   0 & 0
   \end{pmatrix} \begin{pmatrix}
   X & 0 \\
   0 & -Y
   \end{pmatrix} \right).
\end{eqnarray*} 
Letting 
\begin{eqnarray*}
W &=& \frac{1}{\b(\vp(z_{y_0}),\vp'(z_{y_0}))} \begin{pmatrix}
   0 & C \\
   C & A_2
   \end{pmatrix}    +\frac{1}{\b(\vp(z_{x_0}),\vp'(z_{x_0}))} \begin{pmatrix}
   A_1 & 0 \\
   0 & 0
   \end{pmatrix}  \\
&=& \begin{pmatrix}
   I_{n-1} & 0 & I_{n-1} & 0 \\
   0 & \left.\frac{\a(\vp,\vp')}{\b(\vp,\vp')}\right|_{z_{x_0}} & 0 & 0 \\
   I_{n-1} & 0 & I_{n-1} & 0 \\
   0 & 0 & 0 &  \left.\frac{\a(\vp,\vp')}{\b(\vp,\vp')}\right|_{z_{y_0}}
   \end{pmatrix}
\end{eqnarray*}
It's easy to see that $W$ is a positive semi-definite matrix. 
Using \eqref{eq2.1}, we obtain that
\begin{eqnarray*}
0 &\leq& \left.\frac{\a(\vp,\vp') \vp'' +b(\vp, \vp')}{\b(\vp,\vp') \vp'} \right|_{z_{y_0}} 
- \left.\frac{\a(\vp,\vp') \vp'' +b(\vp,\vp')}{\b(\vp,\vp') \vp'} \right|_{z_{x_0}} \\
&& -\langle e_n(s_0), \n f(y_0)\rangle + \langle e_n(0), \n f(x_0) \rangle \\
&& + \tr(WS) +\lambda \tr(WS^2) 
\end{eqnarray*}
and
\begin{eqnarray*}
\tr(WS) &=& \sum_{i=1}^{n-1} \n^2 \rho \left( (e_i(0),e_i(s_0)), (e_i(0),e_i(s_0)) \right) \\
&& + \left.\frac{\a(\vp,\vp') }{\b(\vp,\vp') } \right|_{z_{x_0}} \n^2 \rho \left( (e_n(0), 0), (e_n(0),0) \right)   \\
&& + \left.\frac{\a(\vp,\vp') }{\b(\vp,\vp') } \right|_{z_{y_0}} \n^2 \rho \left( (0, e_n(s_0)), (0, e_n(s_0)) \right) \\
&=& - \int_0^{s_0} \Ric(e_n,e_n) ds 
\end{eqnarray*}
where we used the variation formulas 
\begin{eqnarray*}
\n^2 \rho \left( (e_n(0), 0), (e_n(0),0) \right) =0, \quad
\n^2 \rho \left( (0, e_n(s_0)), (0, e_n(s_0)) \right) =0.
\end{eqnarray*}
 and (\ref{2nd2}) with $\eta(s)=1$.

Finally, we get 
\begin{eqnarray*}
0 &\leq& \left.\frac{\a(\vp,\vp') \vp'' +b(\vp, \vp')}{\b(\vp,\vp') \vp'} \right|^{z_{y_0}}_{z_(x_0)} +\lambda \tr(WS^2)  \\
&& -\langle e_n(s_0), \n f(y_0)\rangle + \langle e_n(0), \n f(x_0) \rangle - \int_0^{s_0} \Ric(e_n,e_n) ds   \\
&\leq & \kappa z_{y_0} -\kappa z_{x_0} +\lambda \tr(WS^2)  -\kappa s_0 \\
&=& \kappa \left (\Psi(u(y_0)) -\Psi(u(x_0)) -d(x_0,y_0) \right) +\lambda \tr(WS^2) \\
&=& \kappa \, m +\lambda \tr(WS^2), 
\end{eqnarray*}
where we used the curvature condition $\Ric+\n^2 f\ge \k$ in the second inequality.
Since $\kappa <0$ and $m>0$, we get a contradiction by letting $\lambda \to 0$. 
\end{proof}

\subsection{The case $\kappa >0$}

The argument given for $\kappa \leq 0$ in the previous section does not lead to a contradiction when $\kappa >0$. As in \cite{AX19}, we may not be able to show that any solutiong $\vp$ to the one-dimensional equation is a barrier in the case $\kappa >0$. However, we prove that for some family of solutions to the one-dimensional equation, the property of being barriers can be extended smoothly in the family. Moreover, this phenomenon holds for any $\kappa \in \R$, regardless of its sign. 

\begin{thm}\label{Thm3.1}
Let $(M^n, g,f)$ be a closed Bakry-Emery manifold with $\Ric+\nabla^2 f \geq \kappa g$ for some $\kappa > 0$. Let $u$ be a $C^3$ solution of equation \eqref{eq1.1}.
Assume $\a, \b$ are $C^2$ functions. 
Suppose $\vp_c:[a_c,b_c] \to [\inf u, \sup u]$ is a family of $C^2$ solutions of the one-dimensional equation
\begin{equation}
    \a(\vp,\vp') \vp'' -\kappa \, t \, \b(\vp, \vp') \vp' +b(\vp, \vp') =0
\end{equation}
on $[a,b]$ which satisfies 
\begin{itemize}
    \item[(i)] $\vp_c(a_c)=\inf u, \vp_c(b_c)=\sup u, \vp_c'>0 \text{ on } [a_c,b_c];$
    \item[(ii)] $\vp'_c$ is uniformly large for $c \gg c_u$;
    \item[(iii)] $\vp_c$ depends smoothly on $c \in (c_u, \infty)$.
\end{itemize}
Let $\Psi_c$ be the inverse of $\vp_c$. Then we have 
\begin{equation}\label{eq3.2}
    \Psi_c(u(y)) -\Psi_c(u(x)) -d(x,y) \leq 0,
\end{equation}
for all $x,y \in M$ and $c\in (c_u, \infty)$.
\end{thm}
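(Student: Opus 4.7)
The plan is to argue by the continuity method in the parameter $c$, in the spirit of Andrews--Xiong \cite{AX19}. Set
$$S := \{ c \in (c_u, \infty) : \Psi_c(u(y)) - \Psi_c(u(x)) - d(x,y) \leq 0 \text{ for all } x,y \in M\}.$$
Since $(c_u, \infty)$ is connected, it suffices to show that $S$ is nonempty, relatively closed, and relatively open in $(c_u, \infty)$, whence $S = (c_u, \infty)$. Nonemptiness follows from hypothesis (ii): for $c$ sufficiently large, $\vp_c'$ is uniformly bounded below by any prescribed constant, in particular by $M_0 := \|\n u\|_{L^\infty(M)} + 1 < \infty$, so that $\Psi_c' = 1/\vp_c' \leq 1/M_0$; integrating $\Psi_c \circ u$ along a minimizing geodesic from $x$ to $y$ then yields
$$\Psi_c(u(y)) - \Psi_c(u(x)) \leq \frac{\|\n u\|_{L^\infty}}{M_0}\, d(x,y) < d(x,y),$$
placing such $c$ in $S$. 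Closedness is immediate from (iii): the smooth dependence of $\vp_c$ on $c$ makes $F_c(x,y) := \Psi_c(u(y)) - \Psi_c(u(x)) - d(x,y)$ jointly continuous in $(x,y,c)$, so the closed condition $F_c \leq 0$ is preserved under limits.

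For openness, fix $c_0 \in S$ and note that $\max_{M\times M} F_{c_0} = 0$, attained at least on the diagonal. I would argue by contradiction: suppose a sequence $c_k \to c_0$ satisfies $m_k := \max F_{c_k} > 0$. Then the maxima are attained at points $(x_k, y_k)$ with $x_k \neq y_k$ (since $F_c \equiv 0$ on the diagonal), and after passing to a subsequence $(x_k, y_k) \to (x_*, y_*)$ with $m_k \to m_* = F_{c_0}(x_*, y_*) = 0$. When $x_* \neq y_*$, I would apply the maximum principle for semi-continuous functions (Theorem \ref{max prin}) at the interior maximum $(x_k, y_k)$, using the smooth approximation $\rho$ of Definition \ref{def-rho} with $\eta \equiv 1$, and run the same chain of estimates already carried out in the proof of Theorem \ref{Thm1.3} for the case $\k \leq 0$. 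Because $\vp_c$ satisfies the one-dimensional ODE, the quotient $\tfrac{\a(\vp,\vp')\vp'' + b(\vp,\vp')}{\b(\vp,\vp')\vp'}$ equals $\k t$ identically, so the calculation terminates in
$$0 \leq \k\, m_k + \lambda\, \tr(W_k S_k^2);$$
letting $k \to \infty$ and then $\lambda \to 0^+$ gives $0 \leq \k\, m_* = 0$, an equality. The case $x_* = y_*$ would be handled by a Taylor expansion near the diagonal, where $F_c \leq 0$ is infinitesimally equivalent to the pointwise gradient bound $|\n u(x)| \leq \vp_c'(\Psi_c(u(x)))$; the $C^3$-regularity of $u$ together with the smoothness of $\vp_c$ in $c$ allows such bounds to be transferred to nearby parameters.

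The main obstacle is precisely this borderline: when $\k > 0$ the maximum principle produces only the degenerate equality $0 \leq 0$ and no quantitative margin, so additional input beyond the MP is required. To promote this into openness of $S$, I plan to extract rigidity from the equality case -- namely, that the minimizing geodesic from $x_*$ to $y_*$ must saturate the Bakry--\'Emery bound $\Ric + \n^2 f \geq \k g$ and that all second-order MP inequalities become equalities -- and then combine this rigidity with an envelope computation using (iii): the one-sided derivatives $\frac{d}{dc}\big|_{c_0^\pm} m(c)$ can be expressed as extrema of $\p_c F_c|_{c_0}$ over the maximum set $K_0 \subset M \times M$ of $F_{c_0}$, and the structure of the ODE for $\vp_c$ together with the extracted rigidity will be used to force these derivatives to vanish, preventing $m(c)$ from becoming positive in a $c_0$-neighborhood.
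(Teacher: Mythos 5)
Your continuity-method skeleton (define the good set $S$ of parameters $c$; nonemptiness from (ii), closedness from (iii), openness via a maximum-principle argument) is the right shape, and you have correctly diagnosed the central difficulty: running the Theorem \ref{Thm1.3} computation at a maximum of $Z_c := \Psi_c(u(y))-\Psi_c(u(x))-d(x,y)$ ends in $0\le \kappa m + \lambda\,\mathrm{tr}(WS^2)$, which is useless in the limit $m\to 0$ when $\kappa>0$. But the proposal stops at diagnosing the obstacle and then sketches a ``rigidity plus envelope-derivative'' plan that is not developed and does not match what actually makes the argument close. There are two concrete gaps.

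First, you have no mechanism for the degenerate case where the zero of $Z_{c_1}$ at the critical parameter $c_1$ collapses onto the diagonal $x_*=y_*$. Your suggestion of a ``Taylor expansion near the diagonal'' is not an argument. The paper's resolution is to pass to the rescaled quantity $\hat Z(x,y):= Z(x,y)/d(x,y)$ and to compactify $(M\times M)\setminus\Delta$ by gluing on the unit sphere bundle $SM$, where $\hat Z$ extends continuously to $\hat Z(x,v)=\frac{\nabla_v u(x)}{\vp'(\Psi(u(x)))}-1$; the ``diagonal'' case then becomes a genuine boundary maximum on $SM$ while $Z<0$ in the interior, and the Hopf boundary-point lemma gives a strictly negative normal derivative of $Z$ along the boundary, contradicting $\hat Z(x_0,v_0)=0$. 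Without this compactification and boundary maximum principle, the $x_*=y_*$ case is simply unhandled.

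Second, for the interior case $x_*\neq y_*$, you use the Theorem \ref{Thm1.3} estimate, which cancels the $d(x,y)$ contribution against $\kappa(z_y-z_x)$ and leaves only $\kappa m$. The paper instead invokes the two-point Bochner-type identity of Lemma \ref{lemma3.1} (Lemma 15 of \cite{AX19}), which organizes the computation so that the right-hand side is $\kappa(z_{y_0}-z_{x_0})$ rather than $\kappa m$; at the first critical $c_1$ one has $Z(x_0,y_0)=0$, hence $z_{y_0}-z_{x_0}=d(x_0,y_0)>0$, so the right side is strictly positive while at an interior maximum $\F[Z]\le 0$ and $\nabla Z=0$. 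That is a clean, strict contradiction with no degeneracy --- precisely the step your $\kappa m_*=0$ reasoning cannot produce, and one that does not require any rigidity discussion or any differentiation of the maximum value in $c$. In short, the openness step is not actually proved in your proposal; you need the $\hat M$/$\hat Z$ compactification plus the Hopf lemma for the boundary case, and the identity of Lemma \ref{lemma3.1} (rather than the Theorem \ref{Thm1.3} cancellation) for the interior case.
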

\begin{corollary}
Under the assumptions of Theorem \ref{Thm3.1}, we have 
\begin{equation*}
    |\n u(x)| \leq \vp'_c \left(\Psi_c(u(x)) \right), 
\end{equation*}
for all $x\in M$ and $c\in (c_u, \infty)$. 
\end{corollary}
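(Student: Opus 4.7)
The plan is to deduce the pointwise gradient bound from the two-point comparison \eqref{eq3.2} by letting the second point approach the first along a geodesic. Fix $x \in M$, a parameter $c \in (c_u, \infty)$, and a unit tangent vector $\xi \in T_x M$. I would set $y_t := \exp_x(t \xi)$ for $t > 0$ small, so that $d(x, y_t) = t$ for all sufficiently small $t$, and apply Theorem \ref{Thm3.1} to the pair $(x, y_t)$ to get
$$\Psi_c(u(y_t)) - \Psi_c(u(x)) \leq t.$$

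Since $u$ is $C^3$ by hypothesis and $\Psi_c$ is $C^2$ on $[\inf u, \sup u]$ (as the inverse of the $C^2$ function $\vp_c$ with $\vp_c' > 0$), the composition $t \mapsto \Psi_c(u(y_t))$ is $C^1$ near $t = 0$. A first-order Taylor expansion gives
$$\Psi_c(u(y_t)) - \Psi_c(u(x)) = t \, \Psi_c'(u(x)) \, \langle \n u(x), \xi \rangle + o(t).$$
Substituting into the inequality above, dividing by $t > 0$, and letting $t \to 0^+$ yields $\Psi_c'(u(x)) \langle \n u(x), \xi \rangle \leq 1$. Taking the supremum over unit vectors $\xi \in T_x M$ and using the identity
$$\Psi_c'(u(x)) = \frac{1}{\vp_c'(\Psi_c(u(x)))},$$
which is valid because $\vp_c' > 0$ on $[a_c, b_c]$, I conclude
$$|\n u(x)| \leq \vp_c'\bigl(\Psi_c(u(x))\bigr),$$
as claimed.

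There is no real obstacle, but one should verify the argument is meaningful at every $x \in M$. If $u(x)$ attains $\sup u$ or $\inf u$, then $\Psi_c(u(x))$ equals $b_c$ or $a_c$; at such a point $x$ is an interior extremum of $u$ so $\n u(x) = 0$, while $\vp_c'(\Psi_c(u(x))) > 0$, and the inequality holds trivially. For all other $x$, $\Psi_c(u(x))$ lies in the open interval $(a_c, b_c)$ where $\vp_c'$ is strictly positive, so the Taylor expansion and the reciprocal identity are applied without issue.
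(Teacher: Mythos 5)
Your argument is correct and is exactly what the paper intends: the paper proves the analogous corollary after Theorem \ref{Thm1.3} with the one-line remark ``by letting $y$ approach $x$,'' and your proposal fills in precisely those details (geodesic approach, first-order Taylor expansion, supremum over unit directions, and the reciprocal derivative identity). The extra discussion of extremum points is harmless but unnecessary, since $\vp_c' > 0$ holds on the entire closed interval $[a_c, b_c]$ by assumption (i), so the expansion and division are valid everywhere.
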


We prove the following lemma, which will be needed in the proof of Theorem \ref{Thm3.1}. 
\begin{lemma}\label{lemma3.1}
Let $u$ be a $C^3$ solution of \eqref{eq1.1}. Let $x\neq y$ with $d(x,y) < \text{inj}(M)$, the injectivity radius of $M$. Let $\gamma_0: [0,s_0] \to M$ be the length-minimizing geodesic from $x$ to $y$, and choose Fermi coordinate as before. Let $z_y=\Psi(u(y))$ and $z_x =\Psi(u(x))$. 
Then 
\begin{equation*}
    Z(x,y): =\Psi(u(y)) -\Psi(u(x)) -d(x,y)
\end{equation*}
 satisfy 
\begin{eqnarray*}
\F[Z]&:=&  \left.\frac{\a(\vp,\vp')}{\b(\vp,\vp')}\right|_{z_y} \n^2_{((0,e_n),(0,e_n))} Z + \left.\frac{\a(\vp,\vp')}{\b(\vp,\vp')}\right|_{z_x} \n^2_{((e_n,0),(e_n,0))} Z  \\
&& + \sum_{i-1}^{n-1} \n^2_{((e_i,e_i),(e_i,e_i))} Z \\
&=& \left.\frac{\a(\vp,\vp')\vp'' +b(\vp,\vp')}{\b(\vp,\vp')\vp'}\right|^{z_x}_{z_y} 
+ \n Z * \n Z +P * \n Z,
\end{eqnarray*}
where the coefficients of $\n Z * \n Z$ and $P *\n Z$ are $C^1$ functions. 
\end{lemma}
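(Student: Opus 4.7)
My plan is to decompose $Z = v(y)-v(x)-d(x,y)$ with $v:=\Psi\circ u$, apply the operator $\F$ to each of the three summands separately, and then use the equation $\L_f[u]=0$ at $x$ and $y$ to rewrite the second-order $v$-contributions in terms of $\vp,\vp'$. Since $d(x,y)<\mathrm{inj}(M)$, the distance function is smooth near $(x,y)$; as elsewhere in the paper I would first replace $d$ by the modified distance $\rho$ from Definition \ref{def-rho} with $\eta\equiv 1$, so that the variation formulas \eqref{1st}--\eqref{2nd2} apply directly to the relevant second derivatives while $\n Z$ is unchanged at the base point.

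For the distance piece $-\rho(x,y)$, the formulas give $\n^2_{((e_n,0),(e_n,0))}\rho = \n^2_{((0,e_n),(0,e_n))}\rho = 0$ and
\[
\sum_{i=1}^{n-1}\n^2_{((e_i,e_i),(e_i,e_i))}\rho = -\int_0^{s_0}\Ric(e_n,e_n)\,ds,
\]
contributing $+\int_0^{s_0}\Ric(e_n,e_n)\,ds$ to $\F[Z]$. For the $v$-pieces, writing $u=\vp(v)$ yields $\n u = \vp'\n v$, $\nabla^2 u = \vp'\nabla^2 v + \vp''\n v\otimes\n v$, $|\n u|=\vp'|\n v|$. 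Substituting into $\L_f[u]=0$ at $y$ and dividing by $\vp'\b$ produces
\[
\Delta v + \Big(\tfrac{\a}{\b}\Big|_{(\vp,\vp'|\n v|)}-1\Big)\frac{\nabla^2 v(\n v,\n v)}{|\n v|^2} = \langle \n v,\n f\rangle - \frac{\a\vp''|\n v|^2+b}{\b\vp'}\Big|_{(\vp,\vp'|\n v|)},
\]
whereas the contribution of $v(y)$ to $\F[Z]$ is $\Delta v|_y + (\tfrac{\a}{\b}|_{(\vp,\vp')}-1)v_{nn}|_y$. Matching the two expressions requires absorbing three discrepancies: (i) $\a,\b$ arguments $(\vp,\vp'|\n v|)$ versus $(\vp,\vp')$; (ii) $\nabla^2 v(\n v,\n v)/|\n v|^2$ versus $v_{nn}$; (iii) $\langle\n v,\n f\rangle$ versus $\langle e_n,\n f\rangle$. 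Each is $O(|\n_y Z|)$ with $C^1$ coefficients, since $\n_y Z = \n v(y)-e_n(s_0)$, $\nabla^2 v$ is $C^1$ from $u\in C^3$, and $\a,\b\in C^2$. Therefore the $v(y)$-piece equals $\langle e_n(s_0),\n f(y)\rangle - \tfrac{\a\vp''+b}{\b\vp'}\big|_{z_y} + \n Z*\n Z + P*\n Z$, and the sign-flipped analogue at $x$ handles $-v(x)$.

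Summing the three pieces,
\[
\F[Z] = \frac{\a\vp''+b}{\b\vp'}\Big|^{z_x}_{z_y} + \Big\{\int_0^{s_0}\Ric(e_n,e_n)\,ds + \langle e_n(s_0),\n f(y)\rangle - \langle e_n(0),\n f(x)\rangle\Big\} + \n Z*\n Z + P*\n Z,
\]
and the Bochner identity $(f\circ\gamma_0)''(s) = \nabla^2 f(e_n,e_n)$ integrated from $0$ to $s_0$ collapses the braced quantity to $\int_0^{s_0}\Ric_f(e_n,e_n)\,ds$, a purely background-geometric expression which is folded into $P*\n Z$. The hard part is the second step: verifying that each of the three error sources collapses \emph{precisely} into the $\n Z*\n Z + P*\n Z$ pattern with $C^1$ coefficients. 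This is where the $C^3$ regularity of $u$ and the $C^2$ regularity of $\a,\b$ enter essentially, and it is the most delicate bookkeeping in the proof.
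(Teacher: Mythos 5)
The paper does not actually prove this lemma: it is dispatched in one line as ``a special case of Lemma~15 in \cite{AX19}.'' So you are not following the paper's route at all---you are supplying the omitted derivation---and the structure of your argument (decompose $Z$, use the modified distance $\rho$ with $\eta\equiv1$, change variables $u=\vp(v)$ in $\L_f[u]=0$, compare the resulting one-dimensional quantities against the frozen coefficients $\a(\vp,\vp')|_{z_x},\a(\vp,\vp')|_{z_y}$) is indeed the right one and parallels the two-point Hessian computations carried out explicitly in Sections~3--5.

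There is, however, a genuine error in your last step. Having correctly collapsed
\[
\int_0^{s_0}\Ric(e_n,e_n)\,ds+\langle e_n(s_0),\n f(y)\rangle-\langle e_n(0),\n f(x)\rangle
=\int_0^{s_0}\Ric_f(e_n,e_n)\,ds
\]
via $(f\circ\gamma_0)''=\n^2 f(e_n,e_n)$ along the geodesic, you then assert that this ``purely background-geometric expression\dots is folded into $P*\n Z$.'' That cannot be done. By definition the terms $\n Z*\n Z$ and $P*\n Z$ are contractions against $\n Z$ and vanish when $\n Z=0$; the integrated Bakry--\'Emery Ricci curvature does not. It is a zeroth-order (in $\n Z$) contribution and must be kept as a separate additive term in the identity. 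Indeed, in the application of this lemma inside the proof of Theorem~\ref{Thm3.1}, it is precisely this term which, combined with $\Ric_f\geq\kappa g$, produces the lower bound $\kappa s_0=\kappa\,d(x_0,y_0)$; if it could be absorbed into $P*\n Z$ it would vanish at the maximum point (where $\n Z=0$) and the contradiction would evaporate. Your derivation is correct up to and including the computation of the integrated curvature term; the error is purely in the final bookkeeping claim, and the lemma's right-hand side should read
\[
\left.\frac{\a(\vp,\vp')\vp''+b(\vp,\vp')}{\b(\vp,\vp')\vp'}\right|^{z_x}_{z_y}
+\int_0^{s_0}\Ric_f(e_n,e_n)\,ds+\n Z*\n Z+P*\n Z
\]
rather than omitting (or hiding) the curvature integral.
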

\begin{proof}
This is a special case of Lemma 15 in \cite{AX19}.
\end{proof}

\begin{proof}[Proof of Theorem \ref{Thm3.1}]
We argue by contradiction and assume that \eqref{eq3.2} does not hold for some $c_0 >c_u$. 

Let $\Delta = \{(x,x) : x\in M\}$ be the diagonal of $M\times M$ and consider a manifold $\hat{M}$ with boundary, which is a natural compactification of $(M\times M)\setminus \Delta$. 
As a set, $\hat{M}$ is the disjoint union of the $(M\times M)\setminus \Delta$ and the unit sphere bundle $SM:=\{(x,v) : x \in M, v \in T_xM \}$. The manifold with structure is defined by the atlas generated by all charts for $(M\times M)\setminus \Delta$, together with the charts $\hat{Y}$ from $SM\times (0,r)$ defined by taking a chart $Y$ for $SM$ and setting $\hat{Y}(z,s):= \left(\exp(sY(z)), \exp(-s Y(z)) \right)$.

For simplicity of notations, we write $\vp =\vp_c$ and $\Psi =\Psi_c$ in the rest of the proof. 
Define the function $\hat{Z}$ on $\hat{M}$ by
\begin{equation*}
  \hat{Z}(x,y) =\frac{Z(x,y)}{d(x,y)} \text{ for } (x,y) \in (M\times M )\setminus \Delta,
\end{equation*}
and 
\begin{equation*}
  \hat{Z}(x, v) =\frac{\n_v u(x)}{\vp'(\Psi(u(x)))} -1 \text{ for } (x,v) \in SM. 
\end{equation*}
It's easy to see that function $\hat{Z}$ is continuous on $\hat{M}$. 
Assumption (ii) implies that $\hat{Z} \leq 0$ on $\hat{M}$ for all $c$ sufficiently large. 
So let $c_1$ be the smallest number such that 
$\hat{Z} \leq 0$ on $\hat{M}$ for all $c \geq c_1$, i.e.,
\begin{equation*}
    c_1 = \inf \{ t > c_u : \hat{Z} \leq 0 \text{ on } \hat{M} \text{ for all } c \in (t, \infty) \}. 
\end{equation*}
By continuity, we have $c_1 > c_0$, which we shall prove lead to a contradiction.  
For $c=c_1$, there will be two cases. 

\textbf{Case 1:} $\hat{Z}(x_0,y_0) =0 $ for some $x_0 \neq y_0$. 

By Lemma \ref{lemma3.1}, we have at $(x_0,y_0)$, 
\begin{eqnarray*}
\F[Z] +\n Z *\n Z +P*\n Z  \geq \kappa (z_{y_0}-z_{x_0}) = \kappa \, d(x_0,y_0) >0.
\end{eqnarray*}
This contradicts the fact that $Z$ attains its maximum at $(x_0,y_0)$, thus ruling out Case 1.

\textbf{Case 2:} $Z(x,y) < 0$ for all $x\neq y \in M$ and $\hat{Z}(x_0, v_0) =0$ for some $(x_0,v_0) \in SM$. 

In this case, by Lemma \ref{lemma3.1}, we have for $x\neq y$ close enough to each other, 
\begin{eqnarray*}
\F[Z] +\n Z *\n Z +P*\n Z  \geq \kappa (z_{y_0}-z_{x_0}) =  \kappa  \, d(x_0,y_0) + \kappa Z \geq \kappa Z. 
\end{eqnarray*}
The Hopf maximum principle in \cite{Hill70} applies to this situation and yields at $(x_0,v_0)$, 
\begin{eqnarray*}
0 > \n_{(0,v)} Z(x,x) 
&=&\lim_{t\to 0} \frac{Z(x,\exp(tv)) -Z(x,x)}{t} \\
&=& \lim_{t\to 0} \frac{\Psi(u(\exp(tv))) -\Psi(u(x)) - d(x, \exp_x(tv))}{t} \\
&=& \frac{\n_vu(x)}{\vp'(z_x)} -1.
\end{eqnarray*}
This is a contradiction to $\hat{Z}(x_0,v_0) =\frac{\n_{v_0}u(x_0)}{\vp'(z_{x_0})} -1 =0$. 
Therefore, Case 2 is impossible. 
\end{proof}



\bibliographystyle{plain}
\bibliography{ref}

\end{document}